\newcommand{\measure}{\omega}
\newcommand{\isomeas}{\omega_{\mathrm{iso}}}
\newcommand{\volmeas}{\omega_{\mathrm{vol}}}
\newcommand{\Fiso}{F_{\mathrm{iso}}}
\newcommand{\Fvol}{F_{\mathrm{vol}}}
\newcommand{\dgiso}{\dist_{\mathrm{geod,}\SLn}}
\newcommand{\dgisotwo}{\dist_{\mathrm{geod,}\SL(2)}}
\newcommand{\dgisothree}{\dist_{\mathrm{geod,}\SL(3)}}
\newcommand{\dgvol}{\dist_{\mathrm{geod,}\,\R^+\!\cdot\id}}
\newcommand{\dgfull}{\dist_{\mathrm{geod,}\GLpn}}
\newcommand{\dgparam}{\dist_{\mathrm{geod},\mu,\muc,\kappa}}
\newcommand{\dgparamtilde}{\dist_{\mathrm{geod},\mu,\mutilde_c,\kappa}}
\newcommand{\dgso}{\dist_{\mathrm{geod,}\SOn}}
\newcommand{\dgcso}{\dist_{\mathrm{geod,}\CSOn}}
\newcommand{\lenparam}{\len_{\mu,\muc,\kappa}}
\newcommand{\lenparamtilde}{\len_{\mu,\mutilde_c,\kappa}}
\newcommand{\isonormtilde}[1]{{\norm{#1}}_{\mu, \mutilde_c, \kappa}}
\newcommand{\gammaiso}{\gamma_{\mathrm{iso}}}
\newcommand{\gammavol}{\gamma_{\mathrm{vol}}}
\newcommand{\gammaisodot}{{\dot\gamma}_{\mathrm{iso}}}
\newcommand{\gammavoldot}{{\dot\gamma}_{\mathrm{vol}}}
\DeclareMathOperator{\CSO}{CSO}
\newcommand{\CSOn}{\CSO(n)}
\newcommand{\sigmaeH}{\sigma_{\mathrm{eH}}}
\newcommand{\sigmaH}{\sigma_{\mathrm{H}}}
\newcommand{\fulldisteuclid}{\dist_{\mathrm{Euclid,}\mu,\muc,\kappa}}
\newcommand{\fulldistgeod}{\dist_{\mathrm{geod,}\mu,\muc,\kappa}}
\newcommand{\fulldistgeodPsym}{\dist_{\mathrm{geod,}\PSymn,\mu,\kappa}}
\newcommand{\fulldisteuclidSym}{\dist_{\mathrm{Euclid,}\Symn,\mu,\kappa}}
\newcommand{\disteuclidPhiOmega}{\dist_{\mathrm{Euclid,}\varphi(\Omega)}}
\newcommand{\dgOmega}{\dist_{\mathrm{geod,}\Omega}}
\newcommand{\scalee}{\mathrm{e}}
\newcommand{\scaleetilde}{\tilde{\mathrm{e}}}
\newcommand{\dgright}{\dist_{\mathrm{geod,right}}}
\newcommand{\ddtdot}{\ddt^{\!\!\circ}}
\newcommand{\ddtsquare}{\ddt^{\!\!\square}}
\newcommand{\ddttriangle}{\ddt^{\!\!\vartriangle}}
\newcommand{\ddttriangledown}{\ddt^{\!\!\triangledown}}
\newcommand{\ddtlog}{\ddt^{\!\!\mathrm{log}}}
\newcommand{\scaleelog}{\scalee_{\mathrm{log}}}
\newcommand{\Biot}{T^{\mathrm{Biot}}}
\DeclareMathOperator{\Curl}{Curl}
\DeclareMathOperator{\curl}{curl}
\newcommand{\quoteref}[1]{\enquote{\emph{#1}}}
\newcommand{\quoteesc}[1]{{\rm[}#1{\rm]}}
\newcommand{\quotepar}{\\\vspace*{\parskip}\indent}
\newcommand{\hot}{\mathrm{h.o.t.}}
\newcommand{\gbad}{{\check{g}}}
\newcommand{\gbadtoo}{{\check{\check g}}}
\renewcommand{\afrac}[2]{#1\! /\!\!\; #2}
\definecolor{ballColor}{rgb}{.924,.987,.924}
\newcommand{\GLBallOpacity}{.7}
\newcommand{\colorGLprime}{white}
\definecolor{planeColorLeft}{rgb}{.924,.987,.924}
\definecolor{planeColorRight}{rgb}{.735,.84,.735}
\definecolor{tangentPlaneColor}{rgb}{.924,.987,.924}
\definecolor{nametagColorGL}{rgb}{.14,.21,.14}
\newcommand{\colorFlatso}{orange}
\newcommand{\colorFlatskew}{red}
\newcommand{\colorFlatgradu}{black}
\newcommand{\colorFlatsym}{violet}
\newcommand{\colorFlatSO}{red}
\newcommand{\colorFlatSOconvexity}{red}
\newcommand{\colorConvexitySOgeodesic}{Brown}
\newcommand{\colorConvexityGLgeodesic}{blue}
\newcommand{\colorFlatId}{black}
\newcommand{\colorFlatPolar}{red}
\newcommand{\colorFlatF}{black}
\newcommand{\colorFlatU}{blue}
\newcommand{\colorNametagGL}{nametagColorGL}
\newcommand{\colorRiemannSO}{red}
\newcommand{\colorRiemannId}{black}
\newcommand{\colorRiemannPolar}{red}
\newcommand{\colorRiemannF}{blue}
\newcommand{\colorRiemannso}{orange}
\newcommand{\colorRiemanngradu}{violet}
\newcommand{\colorRiemannsym}{violet}
\newcommand{\colorRiemannX}{red}
\newcommand{\colorRiemannY}{blue}
\newcommand{\colorRiemannTangent}{white}
\newcommand{\colorEuler}{red}
\newcommand{\colorLagrange}{blue}
\newcommand{\colorUndeformedLeft}{blue!35!white}
\newcommand{\colorUndeformedRight}{blue}
\newcommand{\colorBdeformedLeft}{red!70!white}
\newcommand{\colorBdeformedRight}{red}
\newcommand\pgfmathsinandcos[3]{
	\pgfmathsetmacro#1{sin(#3)}
	\pgfmathsetmacro#2{cos(#3)}
}
\newcommand\LongitudePlane[3][current plane]{
  \pgfmathsinandcos\sinEl\cosEl{#2} 
  \pgfmathsinandcos\sint\cost{#3}
  \tikzset{#1/.estyle={cm={\cost,\sint*\sinEl,0,\cosEl,(0,0)}}}
}
\newcommand\LatitudePlane[3][current plane]{
	\pgfmathsinandcos\sinEl\cosEl{#2}
	\pgfmathsinandcos\sint\cost{#3}
	\pgfmathsetmacro\yshift{\cosEl*\sint}
	\tikzset{#1/.estyle={cm={\cost,0,0,\cost*\sinEl,(0,\yshift)}}} 
}
\newcommand\DrawLongitudeCircle[2][1]{
	\LongitudePlane{\angEl}{#2}
	\tikzset{current plane/.prefix style={scale=#1}}
	\pgfmathsetmacro\angVis{atan(sin(#2)*cos(\angEl)/sin(\angEl))} 
	\draw[current plane, color=black, opacity=0.2] (\angVis:1) arc (\angVis:\angVis+180:1);
}
\newcommand\DrawLatitudeCircle[2][1]{
	\LatitudePlane{\angEl}{#2}
	\tikzset{current plane/.prefix style={scale=#1}}
	\pgfmathsetmacro\sinVis{sin(#2)/cos(#2)*sin(\angEl)/cos(\angEl)}
	\pgfmathsetmacro\angVis{asin(min(1,max(\sinVis,-1)))}
	\draw[current plane, color=black, opacity=0.2] (\angVis:1) arc (\angVis:-\angVis-180:1);
}
\newcommand\DrawSO[2][1]{
	\LongitudePlane{\angEl}{#2}
	\tikzset{current plane/.prefix style={scale=#1}}
	\pgfmathsetmacro\angVis{atan(sin(#2)*cos(\angEl)/sin(\angEl))} 
	\draw[current plane,color=\colorRiemannSO,opacity=1, thick] (\angVis:1) arc (\angVis:\angVis+180:1);
}
\newcommand\DrawGeodesic[4][1]{
	\LatitudePlane{\angEl}{#2}
	\tikzset{current plane/.prefix style={scale=#1}}
	\pgfmathsetmacro\sinVis{sin(#2)/cos(#2)*sin(\angEl)/cos(\angEl)}
	\pgfmathsetmacro\angStart{#3}
	\pgfmathsetmacro\angEnd{#4}
	\draw[current plane,color=\colorRiemannF,opacity=1] (\angStart:1) arc (\angStart:-\angEnd:1);
}
	\newcommand\OmegaSetDefaults{
		\OmegaSetGridSize{.1}{.1}
		\OmegaSetOutlineSampleCount{25}
		\OmegaSetGridSampleCount{5}
		\OmegaSetOutlineToUnitCircle
		\OmegaSetDeformationToId
		\OmegaSetOutlineStyle{thick, color=black}
		\OmegaSetShadingStyle{left color = lightgray, right color = black, opacity = \deformationDefaultOpacity}
		\OmegaSetGridStyle{help lines}
		\OmegaSetSmoothOutline
	}
	\newcommand{\OmegaSetOutlineStyle}[1]{
		\tikzset{outlinestyle/.style={#1}}
	}
	\newcommand{\OmegaSetShadingStyle}[1]{
		\tikzset{shadingstyle/.style={#1}}
	}
	\newcommand{\OmegaSetGridStyle}[1]{
		\tikzset{gridstyle/.style={#1}}
	}
	\newcommand{\OmegaSetSharpOutline}{
		\tikzset{outlineplotstyle/.style={sharp plot}}
	}
	\newcommand{\OmegaSetSmoothOutline}{
		\tikzset{outlineplotstyle/.style={smooth}}
	}
	\newcommand{\OmegaSetOutline}[3]{
		\pgfmathdeclarefunction*{omegagammax}{1}{\pgfmathparse{#1(##1)}}
		\pgfmathdeclarefunction*{omegagammay}{1}{\pgfmathparse{#2(##1)}}
		\pgfmathsetmacro{\OmegaParameterlength}{#3}
	}
	\newcommand{\OmegaSetDeformation}[2]{
		\pgfmathdeclarefunction*{omegaphix}{2}{\pgfmathparse{#1(##1, ##2)}}
		\pgfmathdeclarefunction*{omegaphiy}{2}{\pgfmathparse{#2(##1, ##2)}}
	}
	\newcommand{\OmegaSetGridSize}[2]{
		\pgfmathsetmacro{\OmegaGridx}{#1}
		\pgfmathsetmacro{\OmegaGridy}{#2}
	}
	\newcommand{\OmegaSetOutlineSampleCount}[1]{
		\pgfmathsetmacro{\OmegaOutlineSampleCount}{#1}
	}
	\newcommand{\OmegaSetGridSampleCount}[1]{
		\pgfmathsetmacro{\OmegaGridSampleCount}{#1}
	}
	\newcommand{\OmegaSetOutlineToCircle}[3]{
		\pgfmathdeclarefunction*{circx}{1}{\pgfmathparse{#1 + #3*cos(##1)}}
		\pgfmathdeclarefunction*{circy}{1}{\pgfmathparse{#2 + #3*sin(##1)}}
		\OmegaSetOutline{circx}{circy}{360}
	}
	\newcommand{\OmegaSetOutlineToUnitCircle}{
		\OmegaSetOutlineToCircle{0}{0}{1}
	}
	\newcommand{\OmegaSetOutlineToEllipse}[5]{
		\pgfmathdeclarefunction*{ellx}{1}{\pgfmathparse{#1 + #3*cos(##1)*cos(#5) - #4*sin(##1)*sin(#5)}}
		\pgfmathdeclarefunction*{elly}{1}{\pgfmathparse{#2 + #3*cos(##1)*sin(#5) + #4*sin(##1)*cos(#5)}}
		\OmegaSetOutline{ellx}{elly}{360}
	}
	\newcommand{\OmegaSetOutlineToRectangle}[4]{
		\pgfmathsetmacro{\rectWidth}{#3-#1}
		\pgfmathsetmacro{\rectHeight}{#4-#2}
		\pgfmathdeclarefunction*{rectx}{1}{\pgfmathparse{%
			##1<=\rectWidth ? (#1+##1) : (##1<=\rectWidth+\rectHeight ? (#1+\rectWidth) : (##1<=(2*\rectWidth)+\rectHeight ? (#1+\rectWidth-(##1-(\rectWidth+\rectHeight))) : (#1))}%
		}
		\pgfmathdeclarefunction*{recty}{1}{\pgfmathparse{%
			##1<=\rectWidth ? (#2) : (##1<=\rectWidth+\rectHeight ? (#2+##1-\rectWidth) : (##1<=(2*\rectWidth)+\rectHeight ? (#2+\rectHeight) : (#2+\rectHeight-(##1-(2*\rectWidth+\rectHeight)))))}%
		}
		\OmegaSetOutline{rectx}{recty}{2*\rectWidth+2*\rectHeight}
	}
	\newcommand{\OmegaSetDeformationToId}{
		\pgfmathdeclarefunction*{idx}{2}{\pgfmathparse{##1}}
		\pgfmathdeclarefunction*{idy}{2}{\pgfmathparse{##2}}
		\OmegaSetDeformation{idx}{idy}
	}
	\newcommand{\OmegaDraw}{
		\OmegaDrawCustomDeformation{omegaphix}{omegaphiy}
	}
	\newcommand{\OmegaDrawCustomDeformation}[2]{
		\OmegaDrawFull{\OmegaGridx}{\OmegaGridy}{\OmegaParameterlength}{omegagammax}{omegagammay}{#1}{#2}{\OmegaOutlineSampleCount}{\OmegaGridSampleCount}
	}
	\newcommand{\OmegaDrawFull}[9]{	%
		\begin{scope}
			\def\conversionFromBoundingBoxUnit{0.03514598035}
			\tikzset{borderdomainstyle/.style={domain=0:#3, variable=\t, samples=#8}}
			\tikzset{griddomainstyle/.style={smooth, variable=\t, samples=#9}}
			\newdimen\leftBoundWrongUnit
			\newdimen\rightBoundWrongUnit
			\newdimen\lowerBoundWrongUnit
			\newdimen\upperBoundWrongUnit

			\shade[shadingstyle]
				plot[borderdomainstyle, outlineplotstyle] ({#6(#4(\t), #5(\t))}, {#7(#4(\t), #5(\t))});

			\begin{scope}
				\coordinate (oldLowerLeftBound) at (current bounding box.south west);
				\coordinate (oldUpperRightBound) at (current bounding box.north east);

				\pgfresetboundingbox
				\path[use as bounding box]	%
					plot[borderdomainstyle, outlineplotstyle] ({#4(\t)}, {#5(\t))});
				\coordinate (lowerLeftBound) at (current bounding box.south west);
				\coordinate (upperRightBound) at (current bounding box.north east);
			\end{scope}
			\pgfextractx{\leftBoundWrongUnit}{\pgfpointanchor{lowerLeftBound}{center}}
			\pgfextractx{\rightBoundWrongUnit}{\pgfpointanchor{upperRightBound}{center}}
			\pgfextracty{\lowerBoundWrongUnit}{\pgfpointanchor{lowerLeftBound}{center}}
			\pgfextracty{\upperBoundWrongUnit}{\pgfpointanchor{upperRightBound}{center}}
			\pgfmathsetmacro{\leftBound}{\conversionFromBoundingBoxUnit*\leftBoundWrongUnit}
			\pgfmathsetmacro{\rightBound}{\conversionFromBoundingBoxUnit*\rightBoundWrongUnit}
			\pgfmathsetmacro{\lowerBound}{\conversionFromBoundingBoxUnit*\lowerBoundWrongUnit}
			\pgfmathsetmacro{\upperBound}{\conversionFromBoundingBoxUnit*\upperBoundWrongUnit}

			\begin{scope}
				\clip
					plot[borderdomainstyle, outlineplotstyle] ({#6(#4(\t), #5(\t))}, {#7(#4(\t), #5(\t))});

				\pgfmathsetmacro{\horizontalGridCount}{(\upperBound-\lowerBound)/#2}
				\foreach \b in {1,2,...,{\horizontalGridCount}}{
					\draw[gridstyle]
						plot[griddomainstyle, domain=\leftBound:\rightBound] ({#6(\t,{\lowerBound+(#2*\b)})},{#7(\t,{\lowerBound+(#2*\b})});
				}

				\pgfmathsetmacro{\verticalGridCount}{(\rightBound-\leftBound)/#1}
				\foreach \b in {1,2,...,{\verticalGridCount}}{
					\draw[gridstyle]
						plot[griddomainstyle, domain=\lowerBound:\upperBound] ({#6({\leftBound+(#1*\b)},\t)},{#7({\leftBound+(#1*\b)},\t)});
				}
			\end{scope}

			\draw[outlinestyle]
					plot[borderdomainstyle, outlineplotstyle] ({#6(#4(\t), #5(\t))}, {#7(#4(\t), #5(\t))});

			\path[use as bounding box] (oldLowerLeftBound) rectangle (oldUpperRightBound);
		\end{scope}
	}
\newcommand{\deformationDefaultOpacity}{.42}
\newcommand{\deformationColorOpacity}{.49}
\def\tablepath{./tikz/tables}
\def\acon{4}
\def\bcon{2}
\def\ogdenmuone{0.000012069}
\def\ogdenmutwo{3.7729}
\def\ogdenmuthree{-0.052171}
\def\ogdenalphaone{8.3952}
\def\ogdenalphatwo{1.8821}
\def\ogdenalphathree{-2.2453}
\newcommand{\setParametersTreloar}{
	\pgfmathsetmacro\mucon{9.81*.435}
	\pgfmathsetmacro\betaonecon{.334}
	\pgfmathsetmacro\betatwocon{1.22}
	\pgfmathsetmacro\gammacon{.005}
	\pgfmathsetmacro\betasinhonecon{.63}
	\pgfmathsetmacro\betasinhtwocon{.52}
	\pgfmathsetmacro\gammasinhcon{.000}
	\pgfmathsetmacro\mrmuone{\mucon/2}
	\pgfmathsetmacro\mrmutwo{0}
}
\pgfmathsetmacro\lambdacon{0.5}
\tikzset{graphStyle/.style={smooth, color=black, very thick}}
\tikzset{markStyle/.style={only marks, color=black, mark=o, mark size=1.05mm}}
\pgfplotsset{legendStyle/.style={at={(1.1,.55)}, anchor=south west, nodes=right}}
\pgfplotsset{axisStyle/.style={axis x line=middle, axis y line=middle, ylabel style={above right}, xlabel style={right}, legend style={legendStyle}}}
\let\@fnsymbol\@arabic
\begin{document}
\title{\vspace*{-1.4cm}\Huge Geometry of logarithmic strain measures in solid mechanics}
\date{\today\vspace*{-.7em}}
\author{Patrizio Neff\thanks{\;\;Corresponding author: Patrizio Neff, Head of Chair for Nonlinear Analysis and Modelling, Fakult\"at f\"ur Mathematik, Universit\"at Duisburg-Essen, Campus Essen, Thea-Leymann Stra{\ss}e 9, 45141 Essen, Germany, email: patrizio.neff@uni-due.de, Tel.:+49-201-183-4243},
\quad Bernhard Eidel\thanks{\;\;Bernhard Eidel, Chair of Computational Mechanics, Universit\"at Siegen, Paul-Bonatz-Stra{\ss}e 9-11, 57068 Siegen, Germany, email: bernhard.eidel@uni-siegen.de}
\ \quad and \quad Robert J. Martin\thanks{\;\;Robert J. Martin, Chair for Nonlinear Analysis and Modelling, Fakult\"at f\"ur Mathematik, Universit\"at Duisburg-Essen, Campus Essen, Thea-Leymann Stra{\ss}e 9, 45141 Essen, Germany, email: robert.martin@uni-due.de}
\\[1em]{\small Published in Arch.~Rational~Mech.~Anal., vol.\ 222 (2016), 507--572.}
\\{\small DOI: 10.1007/s00205-016-1007-x}
\\[1em]{\small In memory of Giuseppe Grioli (*10.4.1912 -- \textdagger4.3.2015), a true paragon of rational mechanics}
\\[1em]
}
\maketitle
\begin{abstract}
\noindent We consider the two logarithmic strain measures
\[
	\isomeas = \norm{\dev_n \log U} = \norm{\dev_n \log \sqrt{F^TF}} \quad\text{ and }\quad \volmeas = \abs{\tr(\log U)} = \abs{\tr(\log\sqrt{F^TF})} = \abs{\log(\det U)}\,,
\]
which are isotropic invariants of the Hencky strain tensor $\log U$, and show that they can be uniquely characterized by purely geometric methods based on the geodesic distance on the general linear group $\GLn$. Here, $F$ is the deformation gradient, $U=\sqrt{F^TF}$ is the right Biot-stretch tensor, $\log$ denotes the principal matrix logarithm, $\norm{\,.\,}$ is the Frobenius matrix norm, $\tr$ is the trace operator and $\dev_n X = X-\frac1n\,\tr(X)\cdot\id$ is the $n$-dimensional deviator of $X\in\Rnn$.
This characterization identifies the Hencky (or true) strain tensor as the natural nonlinear extension of the linear (infinitesimal) strain tensor $\eps=\sym\grad u$, which is the symmetric part of the displacement gradient $\grad u$, and reveals a close geometric relation between the classical quadratic isotropic energy potential
\[
	\mu \, \norm{\dev_n \sym\grad u}^2+\frac{\kappa}{2}\,[\tr(\sym\grad u)]^2 = \mu \, \norm{\dev_n \eps}^2+\frac{\kappa}{2}\,[\tr(\eps)]^2
\]
in linear elasticity and the geometrically nonlinear quadratic isotropic Hencky energy
\[
	\mu \, \norm{\dev_n\log U}^2+\frac{\kappa}{2}\,[\tr(\log U)]^2 = \mu\,\isomeas^2 + \frac\kappa2\,\volmeas^2\,,
\]
where $\mu$ is the shear modulus and $\kappa$ denotes the bulk modulus. Our deduction involves a new fundamental logarithmic minimization property of the orthogonal polar factor $R$, where $F=R\,U$ is the polar decomposition of $F$. We also contrast our approach with prior attempts to establish the logarithmic Hencky strain tensor directly as the preferred strain tensor in nonlinear isotropic elasticity.
\\[.63em]
\end{abstract}
{\textbf{Key words:} nonlinear elasticity, finite isotropic elasticity,  Hencky strain, logarithmic strain, %
Hencky energy, differential geometry,
Riemannian manifold, Riemannian metric, geodesic distance, Lie group, Lie algebra, strain tensors, strain measures, rigidity}
\\[1.4em]
\noindent {\bf AMS 2010 subject classification: 74B20, 74A20, 74D10, 53A99, 53Z05, 74A05}\\[1.4em]
\thispagestyle{empty}

\newpage
\tableofcontents

\section{Introduction}
\subsection{What's in a strain?}
\label{section:actualIntroduction}
The concept of \emph{strain} is of fundamental importance in elasticity theory. In linearized elasticity, one assumes that the Cauchy stress tensor $\sigma$ is a linear function of the symmetric infinitesimal strain tensor
\[
	\eps = \sym \grad u = \sym(\grad \varphi - \id) = \sym(F-\id)\,,
\]
where $\varphi\col\Omega\to\R^n$ is the deformation of an elastic body with a given reference configuration $\Omega\subset\R^n$, $\varphi(x) = x+u(x)$ with the displacement $u$, $F=\grad\varphi$ is the deformation gradient\footnote{Although $F$ is widely known as the deformation \enquote{gradient}, $F=\grad\varphi=D\varphi$ actually denotes the \emph{first derivative} (or the \emph{Jacobian matrix}) of the deformation $\varphi$.}, $\sym\grad u = \frac12(\grad u + (\grad u)^T)$ is the symmetric part of the displacement gradient $\grad u$ and $\id\in\GLpn$ is the identity tensor in the group of invertible tensors with positive determinant. In geometrically nonlinear elasticity models, it is no longer necessary to postulate a linear connection between some stress and some strain. However, nonlinear strain tensors are often used in order to simplify the stress response function, and many constitutive laws are expressed in terms of linear relations between certain strains and stresses\footnote{%
In a short note \cite{brannon2011define}, R.~Brannon observes that \quoteref{usually, a researcher will select the strain measure for which the stress-strain curve is most \textbf{linear}}. In the same spirit, Bruhns \cite[p.~147]{bruhns2014history} states that \quoteref{we should \quoteesc{\ldots} always use the logarithmic Hencky strain measure in the description of finite deformations.}. Truesdell and Noll \cite[p.~347]{truesdell65} explain: \quoteref{Various authors \quoteesc{\ldots} have suggested that we should select the strain \quoteesc{tensor} afresh for each material in order to get a simple form of constitutive equation. \quoteesc{\ldots} \textbf{Every} invertible stress relation $T=f(B)$ for an isotropic elastic material is linear, trivially, in an appropriately defined, particular strain \quoteesc{tensor $f(B)$}.}
} \cite{batra1998linear,batra2001comparison,bertram2007rank} (cf.\ Appendix \ref{section:linearRelations} for examples).

There are many different definitions of what exactly the term \enquote{strain} encompasses: while Truesdell and Toupin \cite[p.~268]{truesdell60} consider \quoteref{any uniquely invertible isotropic second order tensor function of \quoteesc{the right Cauchy-Green deformation tensor $C=F^TF$}} to be a strain tensor, it is commonly assumed \cite[p.\ 230]{Hill68} (cf.\ \cite{hill1970, hill1978, bertram2008elasticity, norris2008higherDerivatives}) that a (material or Lagrangian\footnote{Similarly, a \emph{spatial} or \emph{Eulerian} strain tensor $\Ehat(V)$ depends on the left Biot-stretch tensor $V=\sqrt{FF^T}$ (cf.\ \cite{fosdick1968general}).}) strain takes the form of a \emph{primary matrix function} of the right Biot-stretch tensor $U=\sqrt{F^TF}$ of the deformation gradient $F\in\GLpn$, i.e.\ an isotropic tensor function $E\col\PSymn\to\Symn$ from the set of positive definite tensors to the set of symmetric tensors of the form
\begin{equation}
\label{eq:primaryMatrixFunctionDefinition}
	E(U) = \sum_{i=1}^n \scalee(\lambda_i) \cdot e_i\otimes e_i \quad\text{ for }\quad U = \sum_{i=1}^n \lambda_i \cdot e_i\otimes e_i
\end{equation}
with a \emph{scale function} $\scalee\col(0,\infty)\to\R$, where $\otimes$ denotes the tensor product, $\lambda_i$ are the eigenvalues and $e_i$ are the corresponding eigenvectors of $U$. However, there is no consensus on the exact conditions for the scale function $\scalee$; Hill (cf.\ \cite[p.~459]{hill1970} and \cite[p.~14]{hill1978}) requires $\scalee$ to be \enquote{suitably smooth} and monotone with $\scalee(1)=0$ and $\scalee'(1)=1$, whereas Ogden \cite[p.~118]{Ogden83} also requires $\scalee$ to be infinitely differentiable and $\scalee'>0$ to hold on all of $(0,\infty)$.

The general idea underlying these definitions is clear: strain is a measure of deformation (i.e.\ the change in form and size) of a body with respect to a chosen (arbitrary) reference configuration. Furthermore, the strain of the deformation gradient $F\in\GLpn$ should correspond only to the \emph{non-rotational} part of $F$. In particular, the strain must vanish if and only if $F$ is a pure rotation, i.e.\ if and only if $F\in\SOn$, where $\SOn=\{Q\in\GLn \setvert Q^TQ=\id,\, \det Q = 1\}$ denotes the special orthogonal group. This ensures that the only strain-free deformations are rigid body movements:
\begin{alignat}{2}
	F^TF\equiv\id \quad&\Longrightarrow\quad &\grad\varphi(x) &= F(x)=R(x)\in\SOn\\
	&\Longrightarrow\quad &\varphi(x) &= \overline{Q}\,x + \overline{b} \quad\text{for some fixed $\overline{Q}\in\SOn,\, \overline{b}\in\R^n$,} \nonumber
\end{alignat}
where the last implication is due to the rigidity \cite{resetnjak1967liouville} inequality $\norm{\Curl R}^2 \geq c^+\,\norm{\grad R}^2$ for $R\in\SOn$ (with a constant $c^+>0$), cf.\ \cite{Neff_curl06}. A similar connection between vanishing strain and rigid body movements holds for linear elasticity: if $\eps\equiv0$ for the linearized strain $\eps=\sym\grad u$, then $u$ is an \emph{infinitesimal rigid displacement} of the form
\[
	u(x) = \overline{A}\,x+\overline{b} \quad\text{with fixed $\overline{A}\in\son,\, \overline{b}\in\R^n$,}
\]
where $\son = \{A\in\Rnn : A^T = -A\}$ denotes the space of skew symmetric matrices. This is due to the inequality $\norm{\Curl A}^2 \geq c^+\,\norm{\grad A}^2$ for $A\in\son$, cf.\ \cite{Neff_curl06}.

In the following, we will use the term \emph{strain tensor} (or, more precisely, \emph{material strain tensor}) to refer to an injective isotropic tensor function $U \mapsto E(U)$ of the right Biot-stretch tensor $U$ mapping $\PSymn$ to $\Symn$ with
\begin{alignat*}{2}
	& E(Q^TU\,Q)=Q^TE(U)\,Q \qquad\text{for all $Q\in\On$} &\tag{isotropy}\\
	\text{and }\quad & E(U) = 0 \quad\Longleftrightarrow\quad U=\id\,;
\end{alignat*}
where $\On=\{Q\in\GLn \setvert Q^TQ=\id\}$ is the orthogonal group and $\id$ denotes the identity tensor. In particular, these conditions ensure that $\id=U=\sqrt{F^TF}$ if and only if $F\in\SOn$. Note that we do not require the mapping to be of the form \eqref{eq:primaryMatrixFunctionDefinition}.

Among the most common examples of material strain tensors used in nonlinear elasticity is the \emph{Seth-Hill family}\footnote{Note that $\log U = \lim\limits_{r\to0} \frac{1}{2\,r}(U^{2r}-\id)$. Many more examples of strain tensors used throughout history can be found in \cite{curnier1991} and \cite{doyle1956nonlinear}, cf.\ \cite[p.\ 132]{bigoni2012nonlinear}.} \cite{seth1961generalized}
\begin{equation}\label{eq:sethHillFamily}
	E_r(U) =
	\begin{cases}
		\frac{1}{2\,r}(U^{2r}-\id) \quad&:\; r\in\R\setminus\{0\}\\ %
		\log U &:\; r=0
	\end{cases}
\end{equation}
\begin{wrapfigure}{r}{0.45\textwidth}
	\centering
	\tikzsetnextfilename{strainPlot}
	\begin{tikzpicture}[scale=1.26]
		\draw[->,thick] (-0.5,0) -- (4.1,0) node[below=3mm, left=-2mm] {\large$\lambda$};
\draw[->,thick] (0,-1.65) -- (0,1.75) node[above] {\Large$\scalee$};
\draw[thick,orange,smooth,samples=100,domain=0.05:2.24] plot(\x,{(1/2)*(\x*\x -1)});
\draw (2,1.6) node[orange,right] {$\scalee_1$};
\draw[thick,blue,smooth,samples=100,domain=0.075:3] plot(\x,{\x-1});
\draw (3,1.96) node[blue,right] {$\scalee_{\afrac12}$};
\draw[thick,green,smooth,samples=100,domain=0.483:3.9] plot(\x,{(1/2)*(1- 1/(\x*\x))});
\draw (2.2,0.175) node[green,right] {$\scalee_{-1}$}; 
\draw[thick,red,smooth,samples=100,domain=0.2:3.9] plot(\x,{ln(\x)});
\draw (2.2,0.68) node[red,right] {$\scalee_{0}$};
\draw[thick,violet,smooth,samples=100,domain=0.28:3.5] plot(\x,{.5*(\x-(1/\x))});
\draw (3.5,1.54) node[violet,right] {$\scaleetilde_{\afrac12}$};
	\end{tikzpicture}
	\caption{\label{fig:strainPlot}Scale functions $\scalee_r,\scaleetilde_r$ associated with the strain tensors $E_r$ and $\widetilde{E}_r=\frac12(E_r-E_{-r})$ via eigenvalue $\lambda$.}
\end{wrapfigure}
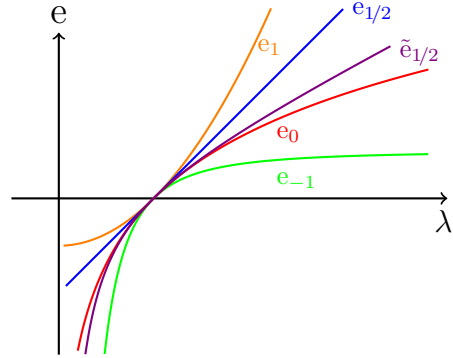
of material strain tensors\footnote{%
The corresponding family of spatial strain tensors
\[
	\Ehat_r(V) =
	\begin{cases}
		\frac{1}{2\,r}(V^{2r}-\id) \quad&:\; r\neq0\\
		\log V &:\; r=0
	\end{cases}
\]
includes the \emph{Almansi-Hamel strain tensor} $\Ehat_{\afrac12}(V)=V-\id$ as well as the \emph{Euler-Almansi strain tensor} $\Ehat_{-1}(V)=\frac12(\id-B\inv)$, where $B=FF^T=V^2$ is the Finger tensor \cite{finger1894}.%
}, which includes the \emph{Biot strain tensor} $E_{\afrac12}(U)=U-\id$, the \emph{Green-Lagrangian strain tensor} $E_1(U) = \frac12(C-\id) = \frac12(U^2-\id)$, where $C=F^TF=U^2$ is the right Cauchy-Green deformation tensor, the \emph{(material) Almansi strain tensor} \cite{almansi1911sulle} $E_{-1}(U)=\frac12(\id-C\inv)$ and the \emph{(material) Hencky strain tensor} $E_0(U)=\log U$, where $\log\col\PSymn\to\Symn$ is the \emph{principal matrix logarithm} \cite[p.~20]{higham2008} on the set $\PSymn$ of positive definite symmetric matrices. The Hencky (or logarithmic) strain tensor has often been considered the \emph{natural} or \emph{true} strain in nonlinear elasticity \cite{tarantola2009stress,Tarantola06,freed1995natural,hanin1956isotropic}. It is also of great importance to so-called hypoelastic models, as is discussed in \cite{xiao1997logarithmic,freed2014hencky} (cf.\ Section \ref{section:hypoelasticity}).\footnote{%
Bruhns \cite[p.\ 41--42]{bruhns2015ohno} emphasizes the advantages of the Hencky strain tensor over the other Seth-Hill strain tensors in the one-dimensional case:
\quoteref{The significant advantage of this logarithmic (Hencky) measure lies in the fact that it tends to infinity as $F$ tends to zero, thus in a very natural way bounding the regime of applicability to the case $F>0$. This behavior can also be observed for strain \quoteesc{tensors} with negative exponent $n$. Compared with the latter, however, the logarithmic measure also goes to infinity as $F$ does, whereas it is evident that for negative values of $n$ the strain \quoteesc{$\frac1n\,(F^n-1)$} is bound to the limit \nolinebreak$-\frac1n$.
\quotepar
All measures with positive values of $n$ including the Green strain share the reasonable property of the logarithmic strain for $F$ going to infinity. For $F$ going to zero, however, these measures arrive at finite values for the specific strains, e.g. at $-\frac12$ for $n=2$, which would mean that interpreted from physics a total compression of the rod (to zero length) is related to a finite value of the strain. This awkward result would not agree with our observation - at least what concerns the behavior of solid materials.}%
}
A very useful approximation of the material Hencky strain tensor was given by Ba\v{z}ant \cite{bazant1998} (cf.\ \cite{ortiz2001computation,al2013computing,Darijani2010223}):
\begin{equation}
\label{eq:bazantDefinition}
	\widetilde{E}_{\afrac12}(U) \;\colonequals\; \tfrac12 \,[E_{\afrac12}(U) + E_{-\afrac12}(U)] \;=\; \tfrac12\,(U-U\inv)\,.
\end{equation}

Additional motivations of the logarithmic strain tensor were also given by Vall\'ee \cite{vallee1978,vallee2008dual}, Roug{\'e}e \cite[p.~302]{rougee1997mecanique} and Murphy \cite{murphy2007linear}. An extensive overview of the properties of the logarithmic strain tensor and its applications can be found in \cite{xiao2005} and \cite{agn_neff2015exponentiatedI}.

All strain tensors, by the definition employed here, can be seen as \emph{equivalent}: since the mapping $U\mapsto E(U)$ is injective, for every pair $E,E'$ of strain tensors there exists a mapping $\psi\col\Symn\to\Symn$ such that $E'(U)=\psi(E(U))$ for all $U\in\PSymn$.
Therefore, every constitutive law of elasticity can -- in principle -- be expressed in terms of any strain tensor\footnote{According to Truesdell and Toupin \cite[p.~268]{truesdell60}, \quoteref{\ldots any \quoteesc{tensor} sufficient to determine the directions of the principal axes of strain and the magnitude of the principal stretches may be employed and is fully general}. Truesdell and Noll \cite[p.~348]{truesdell65} argue that there \quoteref{is no basis in experiment or logic for supposing nature prefers one strain \quoteesc{tensor} to another}.} and no strain tensor can be inherently superior to any other strain tensor.\footnote{Nevertheless, \quoteref{\quoteesc{in} spite of this equivalence, one strain \quoteesc{tensor} may present definite technical advantages over another one} \cite[p.~467]{curnier1991}. For example, there is one and only one spatial strain tensor $\Ehat$ together with a unique objective \emph{and} corotational rate $\ddtsquare$ such that $\ddtsquare \Ehat = \sym(\dot{F}F\inv)=D$. Here, $\ddtsquare=\ddtlog$ is the logarithmic rate, $D$ is the unique rate of stretching and $\Ehat$ is the spatial Hencky strain tensor $\Ehat_0=\log V$; cf.\ Section \ref{section:hypoelasticity} and \cite{Bruhns01,xiao1997logarithmic,norris2008eulerian,zhilin2013material,gurtin1983relationship}.}
Note that this invertibility property also holds if the definition by Hill or Ogden is used: if the strain is given via a \hbox{scale function $\scalee$}, the strict monotonicity of $\scalee$ implies that the mapping $U\mapsto E(U)$ is strictly monotone \cite{agn_martin2015some}, i.e.
\[
	\innerproduct{E(U_1)-E(U_2), U_1 - U_2} > 0
\]
for all $U_1,U_2\in\PSymn$ with $U_1\neq U_2$, where $\iprod{X,Y}=\tr(X^TY)$ denotes the \emph{Frobenius inner product} on $\Symn$ and $\tr(X) = \sum_{i=1}^n X_{i,i}$ is the \emph{trace} of $X\in\Rnn$. This monotonicity in turn ensures that the mapping $U\mapsto E(U)$ is injective.

In contrast to \emph{strain} or \emph{strain tensor}, we use the term \emph{\textbf{strain measure}} to refer to a nonnegative real-valued function $\measure\col\GLpn\to[0,\infty)$ depending on the deformation gradient which vanishes if and only if $F$ is a pure rotation, i.e.\ $\measure(F)=0$ if and only if $F\in\SOn$.

Note that the terms \enquote{strain tensor} and \enquote{strain measure} are sometimes used interchangeably in the literature (e.g.\ \cite{hill1978, norris2008higherDerivatives}). A simple example of a strain measure in the above sense is the mapping $F\mapsto \norm{E(\sqrt{F^TF})}$ of $F$ to an orthogonally invariant norm of any strain tensor $E$.

There is a close connection between strain measures and \emph{energy functions} in isotropic hyperelasticity: an isotropic energy potential \cite{grioli1966thermodynamic} is a function $W$ depending on the deformation gradient $F$ such that
\begin{align}
	W(F)&\geq 0\,, &\tag{normalization}\\
	W(QF)&=W(F)\,, &\tag{frame-indifference}\\
	W(FQ)&=W(F) &\tag{material symmetry: isotropy}
\end{align}
for all $F\in\GLpn,\,Q\in\SOn$ and
\begin{equation}
	W(F) = 0 \quad\text{ if and only if }\quad F\in\SOn\,. \tag{stress-free reference configuration}
\end{equation}
While every such energy function can be taken as a strain measure\label{sectionContains:energyAsStrainMeasure}, many additional conditions for \enquote{proper} energy functions are discussed in the literature, such as constitutive inequalities \cite{truesdell1956,Hill68,hill1970,ball1977constitutive,Ciarlet1988,marsden1994foundations}, generalized convexity conditions \cite{ball1976convexity,ball2002openProblems} or monotonicity conditions to ensure that \enquote{stress increases with strain} \cite[Section 2.2]{agn_neff2015exponentiatedI}. Apart from that, the main difference between strain measures and energy functions is that the former are purely mathematical expressions used to quantitatively assess the extent of strain in a deformation, whereas the latter postulate some \emph{physical} behaviour of materials in a condensed form: an elastic energy potential, interpreted as the elastic energy per unit volume in the undeformed configuration, induces a specific stress response function\footnote{The specific elasticity tensor further depends on the particular choice of a strain and a stress tensor in which to express the constitutive law.}, and therefore completely determines the physical behaviour of the modelled hyperelastic material. The connection between \enquote{natural} strain measures and energy functions will be further discussed later on.

In particular, we  will be interested in energy potentials which can be expressed \emph{in terms of} certain strain measures. Note carefully that, in contrast to strain tensors, strain measures cannot simply be used interchangeably: for two different strain measures (as defined above) $\measure_1,\measure_2$, there is generally no function $f\col\R^+\to\R^+$ such that $\measure_2(F)=f(\measure_1(F))$ for all $F\in\GLpn$. Compared to \enquote{full} strain tensors, this can be interpreted as an unavoidable \emph{loss of information} %
for strain measures (which are only scalar quantities).

Sometimes a strain measure is employed only for a particular kind of deformation. For example, on the group of simple shear deformations (in a fixed plane) consisting of all $F_\gamma\in\GLp(3)$ of the form
\[
	F_\gamma = \matrs{1&\gamma&0\\0&1&0\\0&0&1}\,, \quad \gamma\in\R\,,
\]
we could consider the mappings
\[
	F_\gamma \mapsto \frac12\,\gamma^2 \,, \qquad F_\gamma \mapsto \frac{1}{\sqrt3}\,\abs{\gamma} \qquad\text{ or }\qquad F_\gamma \mapsto \frac{2}{\sqrt3}\,\ln\left(\frac\gamma2 + \sqrt{1+\frac{\gamma^2}{4}}\right);
\]
We will come back to these \emph{partial strain measures} in Section \ref{section:mainResults}.

In the following we consider the question of what strain measures are appropriate for the theory of nonlinear isotropic elasticity. Since, by our definition, a strain measure attains zero if and only if $F\in\SOn$, a simple geometric approach is to consider a \emph{distance function} on the group $\GLpn$ of admissible deformation gradients, i.e.\ a function $\dist\col\GLpn\times\GLpn\to[0,\infty)$ with $\dist(A,B)=\dist(B,A)$ which satisfies the triangle inequality and vanishes if and only if its arguments are identical.\footnote{A distance function is more commonly known as a metric of a metric space. The term \enquote{distance} is used here and throughout the article in order to avoid confusion with the Riemannian metric introduced later on.} Such a distance function induces a \enquote{natural} strain measure on $\GLpn$ by means of the distance to the special orthogonal group $\SOn$:
\begin{equation}
\label{eq:measureDefinedViaDistance}
	\measure(F) \colonequals \dist(F,\SOn) \colonequals \inf_{Q\in\SOn} \dist(F,Q)\,.
\end{equation}
In this way, the search for an appropriate strain measure reduces to the task of finding a \emph{natural, intrinsic distance function} on $\GLpn$.

\subsection{The search for appropriate strain measures}

The remainder of this article is dedicated to this task: after some simple (Euclidean) examples in Section \ref{section:euclideanStrainMeasures}, we consider the \emph{geodesic distance} on $\GLpn$ in Section \ref{section:riemannianStrainMeasureInNonlinearElasticity}. Our main result is stated in Theorem \ref{theorem:mainResult}: if the distance on $\GLpn$ is induced by a left-$\GLn$-invariant, right-$\On$-invariant Riemannian metric on $\GLn$, then the distance of $F\in\GLpn$ to $\SOn$ is given by
\[
	\dg^2(F,\SOn) = \dg^2(F,R) = \mu\,\norm{\dev_n\log U}^2 + \frac{\kappa}{2}\,[\tr(\log U)]^2\,,
\]
where $F=R\,U$ with $U=\sqrt{F^TF}\in\PSymn$ and $R\in\SOn$ is the polar decomposition of $F$. Section \ref{section:riemannianStrainMeasureInNonlinearElasticity} also contains some additional remarks and corollaries which further expand upon this Riemannian strain measure.

In Section \ref{section:alternativeMotivations}, we discuss a number of different approaches towards motivating the use of logarithmic strain measures and strain tensors, whereas applications of our results and further research topics are indicated in Section \ref{section:applicationsAndOngoingResearch}.

Our main result (Theorem \ref{theorem:mainResult}) has previously been announced in a Comptes Rendus M\'ecanique article \cite{Neff_Eidel_Osterbrink_2013} as well as in Proceedings in Applied Mathematics and Mechanics \cite{neff2013henckyPAMM}.

The idea for this paper was conceived in late 2006. However, a number of technical difficulties had to be overcome (cf.\ \cite{agn_birsan2013sum,agn_neff2014logarithmic,agn_lankeit2014minimization,agn_martin2014minimal,neff2013video}) in order to prove our results. The completion of this article might have taken more time than was originally foreseen, but we adhere to the old German saying: \emph{Gut Ding will Weile haben.}

\section{Euclidean strain measures}
\label{section:euclideanStrainMeasures}
\subsection{The Euclidean strain measure in linear isotropic elasticity}
\label{section:euclideanStrainMeasureInLinearElasticity}
An approach similar to the definition of strain measures via distance functions on $\GLpn$, as stated in equation \eqref{eq:measureDefinedViaDistance}, can be employed in linearized elasticity theory: let $\varphi(x) = x+u(x)$ with the displacement $u$. Then the \emph{infinitesimal strain measure} may be obtained by taking the distance of the displacement gradient $\grad u \in \Rnn$ to the set of \emph{linearized rotations} $\son = \{A\in\Rnn : A^T = -A\}$, which is the vector space\footnote{Note that $\son$ also corresponds to the Lie algebra of the special orthogonal group $\SOn$.} of skew symmetric matrices. An obvious choice for a distance measure on the linear space $\Rnn \cong \R^{n^2}$ of $n\times n$-matrices is the \emph{Euclidean distance} induced by the canonical Frobenius norm
\[
	\norm{X} = \sqrt{\tr(X^TX)} = \sqrt{\smash{\sum_{i,j=1}^n}\vphantom{\sum^n} X_{ij}^2\,.} \vphantom{\sum_{i,j=1}^n}
\]
We use the more general weighted norm defined by\label{sectionContains:weightedNormIntroduction}
\begin{equation}
\label{eq:isoNormDefinition}
	\isonorm{X}^2 = \mu\,\norm{\dev_n\sym X}^2 + \muc\,\norm{\skew X}^2 + \frac\kappa2\, [\tr(X)]^2\,, \quad \mu,\muc,\kappa > 0\,,
\end{equation}
which separately weights the \emph{deviatoric} (or \emph{trace free}) \emph{symmetric part} $\dev_n\sym X = \sym X - \frac{1}{n} \tr(\sym X)\cdot \id$, the \emph{spherical part} $\frac{1}{n} \tr(X) \cdot \id$, and the \emph{skew symmetric part} $\skew X = \frac12(X-X^T)$ of $X$; note that $\isonorm{X} = \norm{X}$ for $\mu=\muc=1, \kappa=\frac2n$, and that $\isonorm{\,.\,}$ is induced by the inner product\footnote{%
The family \eqref{eq:isoProdDefinitions} of inner products on $\Rnn$ is based on the Cartan-orthogonal decomposition
\[
	\gln = \Big(\sln\cap\Symn\Big) \oplus \son \oplus \R\cdot\id
\]
of the Lie algebra $\gln=\Rnn$. Here, $\sln=\{X\in\gln \setvert \tr X = 0\}$ denotes the Lie algebra corresponding to the special linear group $\SLn=\{A\in\GLn \setvert \det A = 1\}$.
}
\begin{equation}
\label{eq:isoProdDefinitions}
	\isoprod{X,Y} = \mu\,\iprod{\dev_n\sym X, \dev_n\sym Y} + \muc\,\iprod{\skew X, \skew Y} + \tfrac{\kappa}{2}\tr(X) \tr(Y)
\end{equation}
on $\Rnn$, where $\iprod{X,Y}=\tr(X^TY)$ denotes the canonical inner product. In fact, every isotropic inner product on $\Rnn$, i.e.\ every inner product $\iprod{\cdot,\cdot}_{\mathrm{iso}}$ with
\[
	\iprod{Q^TX\,Q,\;Q^TY\,Q}_{\mathrm{iso}} = \iprod{X,Y}_{\mathrm{iso}}
\]
for all $X,Y\in\Rnn$ and all $Q\in\On$, is of the form \eqref{eq:isoProdDefinitions}, cf.\ \cite{Boor1985}. The suggestive choice of variables $\mu$ and $\kappa$, which represent the \emph{shear modulus} and the \emph{bulk modulus}, respectively, will prove to be justified later on. The remaining parameter $\muc$ will be called the \emph{spin modulus}.

Of course, the element of best approximation in $\son$ to $\grad u$ with respect to the weighted Euclidean distance $\fulldisteuclid(X,Y)=\isonorm{X-Y}$ is given by the associated orthogonal projection of $\grad u$ to $\son$, cf.\ Figure \ref{fig:euclideanDistanceToSkew}. Since $\son$ and the space $\Symn$ of symmetric matrices are orthogonal with respect to $\isoprod{\cdot,\cdot}$, this projection is given by the \emph{continuum rotation}, i.e.\ the skew symmetric part $\skew \grad u = \frac12(\grad u - (\grad u)^T)$ of $\grad u$, the axial vector of which is $\curl u$. Thus the distance is\footnote{
	The distance can also be computed directly: since
	\begin{align*}
		\isonorm{\grad u - A}^2 &= \mu\,\norm{\dev_n\sym (\grad u - A)}^2 + \muc\,\norm{\skew (\grad u - A)}^2 + \frac\kappa2\, [\tr(\grad u - A)]^2\\
		&= \mu\,\norm{\dev_n\sym \grad u}^2 + \muc\,\norm{(\skew \grad u) - A}^2 + \frac\kappa2\, [\tr(\grad u)]^2
	\end{align*}
	for all $A\in\son$, the infimum $\inf\limits_{A\in\son}\isonorm{\grad u - A}=\mu\,\norm{\dev_n\sym \grad u}^2 + \frac\kappa2\, [\tr(\grad u)]^2$ is obviously uniquely attained at $A=\skew\grad u$.
}
\begin{align}
	\fulldisteuclid(\grad u, \son) \ratio&= \inf_{A\in\son} \isonorm{\grad u - A}\nnl
	&= \isonorm{\grad u - \skew \grad u} = \isonorm{\sym \grad u}\,.\label{eq:skewSymmetricPartAsArgmin}
\end{align}
\begin{figure}
	\centering
	\tikzsetnextfilename{euclideanDistanceToSkew}
	\begin{tikzpicture}
		\begin{scope}[yslant=0.5,xslant=-1,xshift=100]
	\fill[left color=planeColorLeft,right color=planeColorRight] (0,1) rectangle (5,6);
	\draw[color=black, opacity=.2,step=.5] (-.2,.8) grid (5.2,6.2);
	\draw[color=\colorFlatso, very thick] (-.2,3) -- (5.2,3) node[pos=1, above right=-4.2] {\large $\son$};
	\node (zero) at (3,3) {};
	\node (gradu) at (1,5) {};
	\node (skewgradu) at (1,3) {};
\end{scope}
\begin{scope}
	\node[below right,yslant=0.5,xslant=-1,color=\colorNametagGL] at (.7,6.3) {{\huge\boldmath $\Rnn$}};
	\fill[color=\colorFlatgradu] (gradu) circle (.075);
	\fill[color=\colorFlatso] (zero) circle (.075);
	\node[below right, color=\colorFlatso] at (zero) {$0$};
	\draw[color=\colorFlatsym, <-, very thick] (gradu) -- (skewgradu) node[below left=-3.9, pos=.42, align=center] {$\varepsilon=\sym\grad u$};
	\draw[color=\colorFlatskew,->, very thick] (zero) -- (skewgradu) node[below right, pos=.75] {$\skew \grad u$};
	\draw[color=\colorFlatgradu,->, very thick] (zero) -- (gradu) node[above right, pos=.84] {$\grad u$};
	\fill[color=\colorFlatskew] (skewgradu) circle (.075);
\end{scope}
	\end{tikzpicture}
	\caption{\label{fig:euclideanDistanceToSkew}The Euclidean distance $\fulldisteuclid^2(\grad u, \son)=\mu\,\norm{\dev_n \eps}^2 + \frac\kappa2\, [\tr(\eps)]^2$ of $\grad u$ to $\son$ in $\Rnn$ in the infinitesimal strain setting. The strain tensor $\eps=\sym\grad u$ is orthogonal to the infinitesimal continuum rotation $\skew\grad u$.}
\end{figure}
\noindent We therefore find
\begin{align*}
	\fulldisteuclid^2(\grad u, \son) &= \isonorm{\sym \grad u}^2\\
	&= \mu\,\norm{\dev_n \sym \grad u}^2 + \frac\kappa2\, [\tr(\sym \grad u)]^2\\
	&= \mu\,\norm{\dev_n \eps}^2 + \frac\kappa2\, [\tr(\eps)]^2 = \Wlin(\grad u)
\end{align*}
for the linear strain tensor $\eps=\sym\grad u$, which is the quadratic isotropic elastic energy, i.e.\ the canonical model of isotropic linear elasticity with
\begin{equation}
	\sigma = D_{\grad u}\Wlin(\grad u) = 2\mu\,\dev_n\eps + \kappa\tr(\eps)\cdot\id\,.
\end{equation}
This shows the aforementioned close connection of the energy potential to geometrically motivated measures of strain. Note also that the so computed distance to $\son$ is independent of the parameter $\muc$, the \emph{spin modulus}, weighting the skew-symmetric part in the quadratic form \eqref{eq:isoNormDefinition}. We will encounter the (lack of) influence of the parameter $\muc$ subsequently again.

Furthermore, this approach \emph{motivates} the symmetric part $\eps=\sym\grad u$ of the displacement gradient as the strain tensor in the linear case: instead of \emph{postulating} that our strain measure should depend only on $\eps$, the above computations \emph{deductively characterize} $\eps$ as the infinitesimal strain tensor from simple geometric assumptions alone.

\subsection{The Euclidean strain measure in nonlinear isotropic elasticity}
In order to obtain a strain measure in the geometrically nonlinear case, we must compute the distance
\[
	\dist(\grad\varphi, \SO(n)) = \dist(F, \SO(n)) = \inf_{Q\in\SOn} \dist(F,Q)
\]
of the deformation gradient $F=\grad\varphi\in\GLpn$ to the actual set of pure rotations $\SOn\subset\GLpn$. It is therefore necessary to choose a distance function on $\GLpn$; an obvious choice is the restriction of the Euclidean distance on $\Rnn$ to $\GLpn$. For the canonical Frobenius norm $\norm{\,.\,}$, the Euclidean distance between $F,P\in\GLpn$ is
\[
	\disteuclid(F,P) = \norm{F-P} = \sqrt{\tr[(F-P)^T(F-P)]}\,.
\]
Now let $Q\in\SOn$. Since $\norm{\,.\,}$ is orthogonally invariant, i.e.\ $\norm{\Qhat X}=\norm{X\Qhat}=\norm{X}$ for all $X\in\Rnn$, $\Qhat\in\On$, we find
\begin{equation}
	\disteuclid(F,Q) = \norm{F-Q} = \norm{Q^T (F-Q)} = \norm{Q^TF-\id}\,.
\end{equation}
Thus the computation of the strain measure induced by the Euclidean distance on $\GLpn$ reduces to the \emph{matrix nearness problem} \cite{higham1988matrix}
\[
	\disteuclid(F, \SOn) = \inf_{Q\in\SOn} \norm{F-Q} = \min_{Q\in\SOn} \norm{Q^TF-\id}\,.
\]
By a well-known optimality result discovered by Giuseppe Grioli \cite{Grioli40} (cf.\ \cite{grioli2013, grioli1962equilibrium, martins1979variational, bouby2005direct}), also called \enquote{Grioli's Theorem} by Truesdell and Toupin \cite[p.~290]{truesdell60}, this minimum is attained for the orthogonal polar factor $R$.\label{sectionContains:griolisTheorem}
\begin{theorem}[Grioli's Theorem \cite{Grioli40,grioli2013,truesdell60}]\label{theorem:grioli} Let $F\in\GLpn$. Then
\[
	\min_{Q\in\SOn} \norm{Q^TF-\id} = \norm{R^TF-\id} = \norm{\sqrt{F^TF}-\id} = \norm{U-\id}\,,
\]
where $F=R\,U$ is the polar decomposition of $F$ with $R=\polar(F)\in\SOn$ and $U=\sqrt{F^TF}\in\PSymn$. The minimum is uniquely attained at the orthogonal polar factor $R$.
\end{theorem}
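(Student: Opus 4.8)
The plan is to reduce the constrained minimization over $\SOn$ to a problem about the eigenvalues of $U$, exploiting the polar decomposition $F=R\,U$ and the orthogonal invariance of the Frobenius norm. First I would substitute $F=R\,U$ and use that $\norm{\,.\,}$ is left-invariant under $\On$: for any $Q\in\SOn$ we have $\norm{Q^TF-\id}=\norm{Q^TR\,U-\id}$. Setting $\widehat{Q}\colonequals Q^TR\in\SOn$ (as $Q,R\in\SOn$), minimizing over $Q\in\SOn$ is the same as minimizing over $\widehat{Q}\in\SOn$, so the problem becomes
\[
	\min_{\widehat{Q}\in\SOn}\norm{\widehat{Q}\,U-\id}\,,
\]
and I must show the minimum is attained at $\widehat{Q}=\id$ (which corresponds to $Q=R$), with value $\norm{U-\id}$.

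Next I would expand the squared objective. Writing $\norm{\widehat{Q}\,U-\id}^2=\tr[(\widehat{Q}\,U-\id)^T(\widehat{Q}\,U-\id)]=\tr(U^2)-2\tr(\widehat{Q}\,U)+n$, I see that only the cross term $\tr(\widehat{Q}\,U)$ depends on $\widehat{Q}$; since $\tr(U^2)$ and $n$ are fixed, minimizing $\norm{\widehat{Q}\,U-\id}^2$ is equivalent to \emph{maximizing} $\tr(\widehat{Q}\,U)$ over $\widehat{Q}\in\SOn$. Because $U\in\PSymn$ I may diagonalize $U=\sum_{i=1}^n\lambda_i\,e_i\otimes e_i$ with $\lambda_i>0$; passing into this eigenbasis (another orthogonal change of variables that preserves the class $\SOn$) reduces the task to showing $\tr(\widehat{Q}\,U)\le\tr(U)=\sum_i\lambda_i$ for all $\widehat{Q}\in\SOn$, with equality exactly at $\widehat{Q}=\id$.

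The key estimate is that $\tr(\widehat{Q}\,U)=\sum_{i=1}^n\lambda_i\,\widehat{Q}_{ii}$, and since $\widehat{Q}$ is orthogonal each diagonal entry satisfies $\widehat{Q}_{ii}\le1$ (its rows are unit vectors). With $\lambda_i>0$ this gives $\tr(\widehat{Q}\,U)\le\sum_i\lambda_i=\tr(U)$ immediately. The substantive part, and what I expect to be the main obstacle, is the \emph{uniqueness} of the maximizer: equality $\widehat{Q}_{ii}=1$ for all $i$ forces each row of $\widehat{Q}$ to equal the corresponding standard basis vector, hence $\widehat{Q}=\id$; here the strict positivity $\lambda_i>0$ (guaranteed by $F\in\GLpn$, so $U\in\PSymn$ is genuinely positive definite rather than merely positive semidefinite) is essential, since a vanishing eigenvalue would allow the associated diagonal constraint to be slack and destroy uniqueness. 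Translating back through $\widehat{Q}=Q^TR$, equality holds uniquely at $Q=R$. Finally I would record the value: at $\widehat{Q}=\id$ the objective is $\norm{U-\id}$, and $\norm{R^TF-\id}=\norm{R^TRU-\id}=\norm{U-\id}=\norm{\sqrt{F^TF}-\id}$, completing the identification of the minimum and its attainment at the orthogonal polar factor $R$.
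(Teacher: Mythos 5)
Your proof is correct: the paper does not actually prove Grioli's theorem (it only cites \cite{Grioli40,grioli2013,truesdell60}), but the remark immediately following the statement records precisely your reduction, namely that the minimization is equivalent to maximizing $\tr(Q^TF)=\tr(\widehat{Q}\,U)$ over $\SOn$, and your diagonalization argument with the bound $\widehat{Q}_{ii}\le 1$ and the uniqueness step via $\lambda_i>0$ is the standard, sound way to complete that route. Nothing is missing.
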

\begin{remark}
The minimization property stated in Theorem \ref{theorem:grioli} is equivalent to \cite{Guidugli80}
\[
	\max_{Q\in\SOn} \tr(Q^TF) = \max_{Q\in\SOn} \iprod{Q^TF,\id} = \iprod{R^TF,\id} = \iprod{U,\id}\,.\qedhere
\]
\end{remark}
\noindent Thus for nonlinear elasticity, the restriction of the Euclidean distance to $\GLpn$ yields the strain measure
\[
	\disteuclid(F, \SOn) = \norm{U-\id}\,.
\]
\begin{figure}[h]
	\centering
	\tikzsetnextfilename{flatGLn}
	\begin{tikzpicture}
		\input{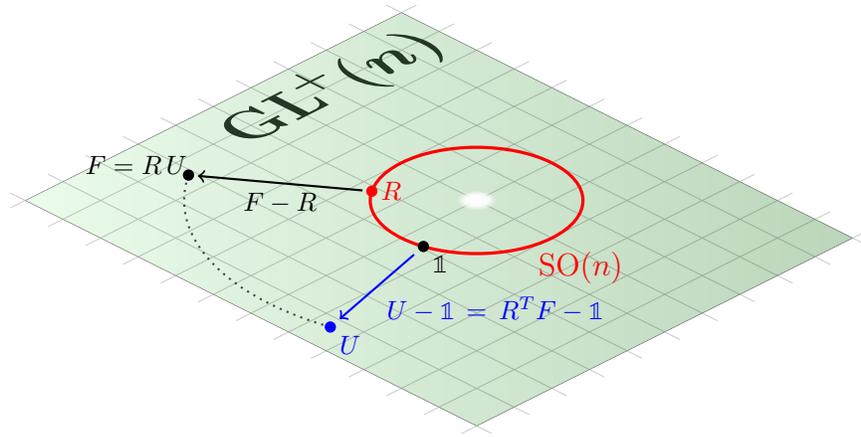}
	\end{tikzpicture}
	\caption{\label{fig:flatGLn}The \enquote{flat} interpretation of $\GLpn\subset\Rnn$ endowed with the Euclidean distance. Note that $\norm{F-R}=\norm{R\,(U-\id)}=\norm{U-\id}$ by orthogonal invariance of the Frobenius norm, where $F=R\,U$ is the polar decomposition of $F$.}
\end{figure}
In analogy to the linear case, we obtain
\begin{equation}
	\disteuclid^2(F, \SOn) = \norm{U-\id}^2 = \norm{E_{\afrac12}}^2\,,
\end{equation}
where $E_{\afrac12}=U-\id$ is the Biot strain tensor. Note the similarity between this expression and the \emph{Saint-Venant-Kirchhoff} energy \cite{kirchhoff1852gleichungen}
\begin{equation}
\label{eq:SVKdefinition}
	\isonorm{E_1}^2 = \mu\,\norm{\dev_3 E_1}^2 + \frac\kappa2\, [\tr(E_1)]^2\,,
\end{equation}
where $E_1=\frac12(C-\id)=\frac12(U^2-\id)$ is the Green-Lagrangian strain.

The squared Euclidean distance of $F$ to $\SOn$ is often used as a lower bound for more general elastic energy potentials. Friesecke, James and M\"uller \cite{friesecke2002rigidity}, for example, show that if there exists a constant $C>0$ such that
\begin{equation}
\label{eq:euclideanDistanceAsLowerBoundForEnergy}
	W(F) \geq C \cdot \disteuclid^2(F,\SO(3))
\end{equation}
for all $F\in\GLp(3)$ in a large neighbourhood of $\id$, then the elastic energy $W$ shows some desirable properties which do not otherwise depend on the specific form of $W$. As a starting point for nonlinear theories of bending plates, Friesecke et al.\ also use the weighted squared norm
\[
	\isonorm{\sqrt{F^TF}-\id}^2 = \mu\,\norm{\dev_3(U-\id)}^2 + \frac\kappa2\,[\tr(U-\id)]^2 = \mu\,\norm{U-\id}^2 + \frac\lambda2\,[\tr(U-\id)]^2\,,
\]
where $\lambda$ is the first Lam\'e parameter, as an energy function satisfying \eqref{eq:euclideanDistanceAsLowerBoundForEnergy}. The same energy, also called the \emph{Biot energy} \cite{Neff_Biot07}, has been recently motivated by applications in digital geometry processing \cite{chao2010simple}.

However, the resulting strain measure $\measure(U)=\disteuclid(F, \SOn)=\norm{U-\id}$ does not truly seem appropriate for finite elasticity theory: for $U\to 0$ we find $\norm{U-\id}\to\norm{\id}=\sqrt{n}<\infty$, thus singular deformations do not necessarily correspond to an infinite measure $\omega$. Furthermore, the above computations are not compatible with the weighted norm introduced in Section \ref{sectionContains:weightedNormIntroduction}: in general \cite{Neff_Biot07,agn_fischle2015geometricallyI,agn_fischle2015geometricallyII},
\begin{equation}
	\min_{Q\in\SOn} \isonorm{F-Q}^2 \neq \min_{Q\in\SOn} \isonorm{Q^TF-\id}^2 \neq \isonorm{\sqrt{F^TF}-\id}^2\,, \label{eq:tensoriallyNotViable}
\end{equation}
thus the Euclidean distance of $F$ to $\SOn$ with respect to $\isonorm{\,.\,}$ does not equal $\isonorm{\sqrt{F^TF}-\id}$ in general. In these cases, the element of best approximation is not the  orthogonal polar factor $R=\polar(F)$.

In fact, the expression on the left-hand side of \eqref{eq:tensoriallyNotViable} is not even well defined in terms of linear mappings $F$ and $Q$ \cite{Neff_Biot07}: the deformation gradient $F=\grad\varphi$ at a point $x\in\Omega$ is a \emph{two-point tensor} and hence, in particular, a linear mapping between the tangent spaces $T_x\Omega$ and $T_{\varphi(x)}\varphi(\Omega)$. Since taking the norm
\[
	\isonorm{X} = \mu\,\norm{\dev_n\sym X}^2 + \muc\,\norm{\skew X}^2 + \frac\kappa2\,[\tr(X)]^2
\]
of $X$ requires the decomposition of $X$ into its symmetric and its skew symmetric part, it is only well defined if $X$ is an endomorphism on a single linear space.\footnote{If $X\col V_1\to V_2$ is a mapping between two different linear spaces $V_1,V_2$, then $X^T$ is a mapping from $V_2$ to $V_1$, hence $\sym X = \frac12\,(X+X^T)$ is not well-defined.} Therefore $\isonorm{F-Q}$, while being a valid expression for arbitrary \emph{matrices} $F,Q\in\Rnn$, is not an admissible term in the setting of finite elasticity.

We also observe that the Euclidean distance is not an \emph{intrinsic} distance measure on $\GLpn$: in general, $A-B\notin\GLpn$ for $A,B\in\GLpn$, hence the term $\norm{A-B}$ depends on the underlying linear structure of $\Rnn$. Since it is not a closed subset of $\Rnn$, $\GLpn$ is also not complete with respect to $\disteuc$; for example, the sequence $\left(\frac1n\cdot\id\right)_{n\in\N}$ is a Cauchy sequence which does not converge.

Most importantly, because $\GLpn$ is not convex, the straight line $\{A+t\,(B-A) \setvert t\in[0,1]\}$ connecting $A$ and $B$ is not necessarily contained\footnote{The straight line connecting $F\in\GLpn$ to its orthogonal polar factor $R$ (i.e.\ the shortest connecting line from $F$ to $\SOn$), however, lies in $\GLpn$, which easily follows from the convexity of $\PSymn$: for all $t\in[0,1]$,\: $t\,U+(1-t)\,\id\in\PSymn$ and thus \[R+t(F-R)=R\,(t\,U+(1-t)\,\id)\in R\cdot\PSymn\subset\GLpn\,.\]} in $\GLpn$, which shows that the characterization of the Euclidean distance as the length of a shortest connecting curve is also not possible in a way intrinsic to $\GLpn$, as the intuitive sketches\footnote{Note that the representation of $\GLpn$ as a sphere only serves to visualize the curved nature of the manifold and that further geometric properties of $\GLpn$ should not be inferred from the figures. In particular, $\GLpn$ is not compact and the geodesics are generally not closed.} in Figures \ref{fig:euclideanDistanceToSOn} and \ref{fig:euclideanAndRiemannianDistanceOnGLn} indicate.
\begin{figure}
	\centering
	\tikzsetnextfilename{euclideanDistanceToSOn}
	\begin{tikzpicture}
		\input{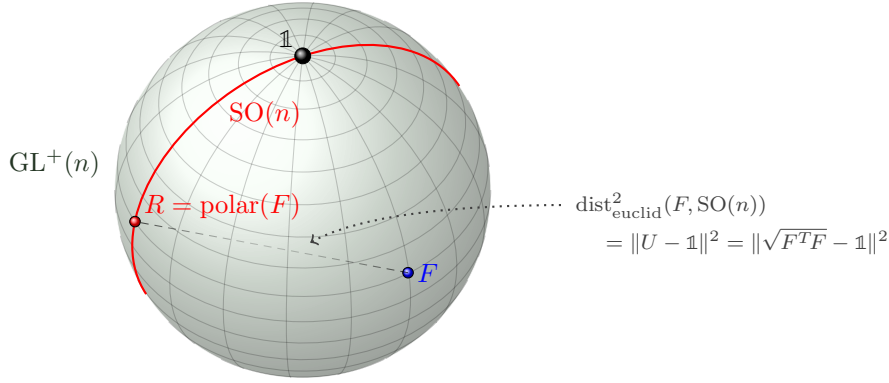}
	\end{tikzpicture}
	\caption{\label{fig:euclideanDistanceToSOn}The Euclidean distance as an extrinsic measure on $\GLpn$.}
\end{figure}

These issues amply demonstrate that the Euclidean distance can only be regarded as an \emph{extrinsic} distance measure on the general linear group. We therefore need to expand our view to allow for a more appropriate, truly \emph{intrinsic} distance measure on $\GLpn$.

\section{The Riemannian strain measure in nonlinear isotropic elasticity}
\label{section:riemannianStrainMeasureInNonlinearElasticity}
\subsection[$\GLpn$ as a Riemannian manifold]{\boldmath $\GLpn$ as a Riemannian manifold}
\label{section:manifoldIntroduction}
In order to find an intrinsic distance function on $\GLpn$ that alleviates the drawbacks of the Euclidean distance, we endow $\GLn$ with a \emph{Riemannian metric}.\footnote{For technical reasons, we define $g$ on all of $\GLn$ instead of its connected component $\GLpn$; for more details, we refer to \cite{agn_martin2014minimal}, where a more thorough introduction to geodesics on $\GLn$ can be found. Of course, our strain measure depends only on the restriction of $g$ to $\GLpn$.} Such a metric $g$ is defined by an inner product
\[
	g_A\col T_A\GLn \times T_A\GLn \to \R
\]
on each tangent space $T_A\GLn$, $A\in\GLn$. Then the length of a sufficiently smooth curve $\gamma\col[0,1]\to\GLn$ is given by
\[
	\len(\gamma) = \int_0^1 \sqrt{g_{\gamma(t)} (\gammadot(t),\gammadot(t))} \;\dt\,,
\]
where $\gammadot(t)=\ddt\,\gamma(t)$, and the \emph{geodesic distance} (cf.\ Figure \ref{fig:euclideanAndRiemannianDistanceOnGLn}) between $A,B\in\GLpn$ is defined as the infimum over the lengths of all (twice continuously differentiable) curves connecting $A$ to $B$:
\[
	\dg(A,B) = \inf\{\len(\gamma) \,|\, \gamma\in C^2([0,1];\GLpn),\; \gamma(0)=A,\; \gamma(1)=B\}\,.
\]
\begin{figure}[h]
	\centering
	\tikzsetnextfilename{euclideanAndRiemannianDistanceOnGLn}
	\begin{tikzpicture}
		\input{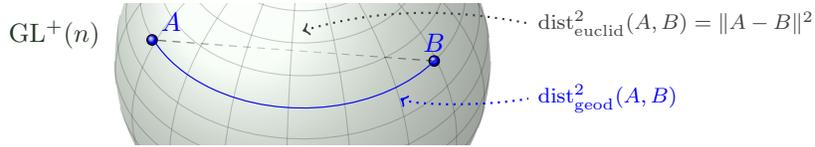}
	\end{tikzpicture}
	\caption{\label{fig:euclideanAndRiemannianDistanceOnGLn}The geodesic (intrinsic) distance compared to the Euclidean (extrinsic) distance.}
\end{figure}
Our search for an appropriate strain measure is thereby reduced to the task of finding an appropriate Riemannian metric on $\GLn$. Although it might appear as an obvious choice, the metric $\gbad$ with
\begin{equation}
\label{eq:gbadDefinition}
	\gbad_A(X,Y) \colonequals \iprod{X,Y} \quad\text{ for all }A\in\GLpn,\; X,Y\in\Rnn
\end{equation}
provides no improvement over the already discussed Euclidean distance on $\GLpn$: since the length of a curve $\gamma$ with respect to $\gbad$ is the classical (Euclidean) length
\[
	\len(\gamma) = \int_0^1 \sqrt{\gbad_{\gamma(t)}(\gammadot(t),\gammadot(t))} \,\dt = \int_0^1 \norm{\gammadot(t)} \,\dt\,,
\]
the shortest connecting curves with respect to $\gbad$ are straight lines of the form $t\mapsto A+t(B-A)$ with $A,B\in\GLpn$. Locally, the geodesic distance induced by $\gbad$ is therefore equal to the Euclidean distance. However, as discussed in the previous section, not all straight lines connecting arbitrary $A,B\in\GLpn$ are contained within $\GLpn$, thus length minimizing curves with respect to $\gbad$ do not necessarily exist (cf.\ Figure \ref{fig:flatGLnEuclideanGeodesics}). Many of the shortcomings of the Euclidean distance therefore apply to the geodesic distance induced by $\gbad$ as well.
\begin{figure}[h]
	\centering
	\tikzsetnextfilename{flatGLnEuclideanGeodesics}
	\begin{tikzpicture}
		\input{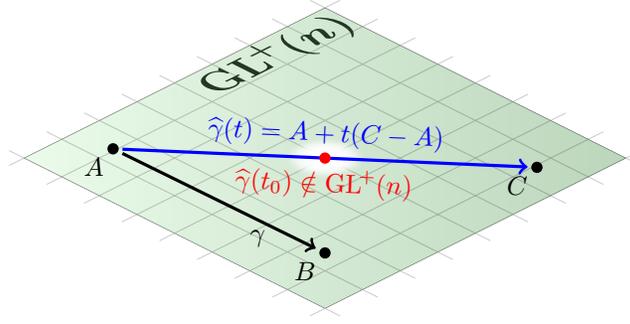}
	\end{tikzpicture}
	\caption{\label{fig:flatGLnEuclideanGeodesics}The shortest connecting (\emph{geodesic}) curves in $\GLpn$ with respect to the Euclidean metric are straight lines, thus not every pair $A,B\in\GLpn$ can be connected by curves of minimal length. The length of the straight line $\gamma\col t\mapsto A+t(B-A)$ connecting $A$ to $B$ is given by $\int_0^1 \sqrt{\gbad_{\gamma(t)}(\gammadot(t),\gammadot(t))}\,\dt=\norm{B-A}$, whereas the curve $\gammahat$ connecting $A$ to $C$ is not contained in $\GLpn$; its length is therefore not well defined.}
\end{figure}

In order to find a more viable Riemannian metric $g$ on $\GLn$, we consider the mechanical interpretation of the induced geodesic distance $\dg$:
while our focus lies on the strain measure induced by $g$, that is the geodesic distance of the deformation gradient $F$ to the special orthogonal group $\SOn$, the distance $\dg(F_1,F_2)$ between two deformation gradients $F_1,F_2$ can also be motivated directly as a \emph{measure of difference} between two linear (or \emph{homogeneous}) deformations $F_1,F_2$ of the same body $\Omega$. More generally, we can define a difference measure between two inhomogeneous deformations $\varphi_1,\varphi_2:\Omega\subset \R^n\to\R^n$ via
\begin{equation}
\label{eq:deformationDistanceDefinition}
	\dist(\varphi_1,\varphi_2) \colonequals \int_\Omega \dg(\grad\varphi_1(x),\grad\varphi_2(x)) \,\dx
\end{equation}
under suitable regularity conditions for $\varphi_1,\varphi_2$ (e.g.\ if $\varphi_1,\varphi_2$ are sufficiently smooth with $\det\grad\varphi_i>0$ up to the boundary). This extension of the distance to inhomogeneous deformations is visualized in Figure \ref{fig:deformationDistanceExplanation}.
\begin{figure}[h]
	\centering
	\tikzsetnextfilename{deformationDistanceExplanation}
	\begin{tikzpicture}
		\input{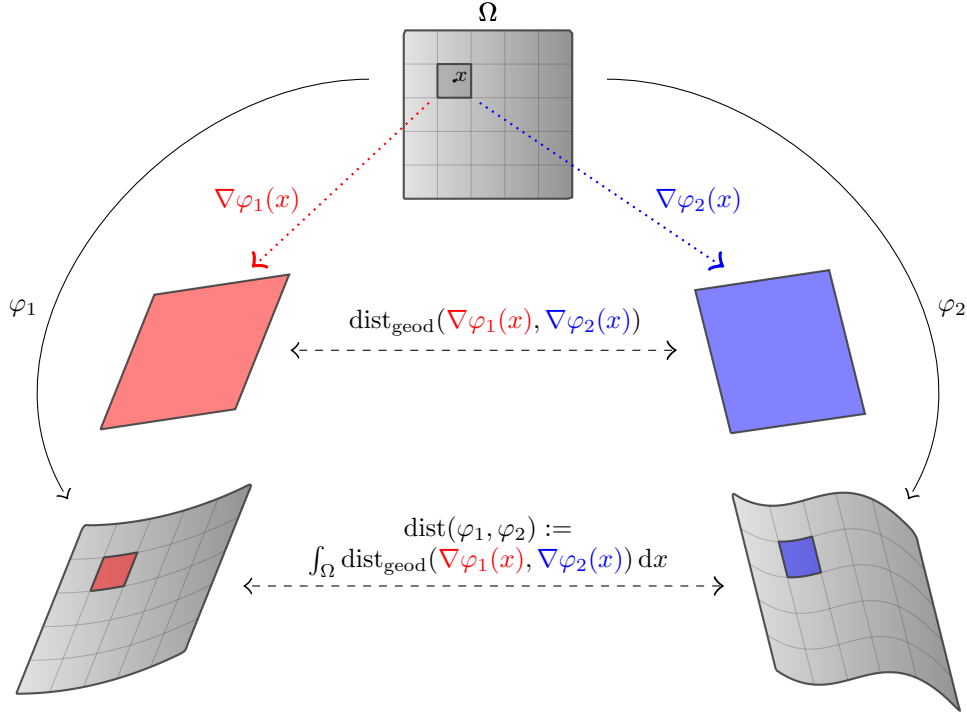}
	\end{tikzpicture}
	\caption{\label{fig:deformationDistanceExplanation}The distance $\dist(\varphi_1,\varphi_2)\colonequals\int_\Omega \dg(\grad\varphi_1(x),\grad\varphi_2(x))\,\dx$ measures how much two deformations $\varphi_1,\varphi_2$ of a body $\Omega$ differ from each other via integration over the pointwise geodesic distances between $\grad\varphi_1(x)$ and $\grad\varphi_2(x)$.}
\end{figure}

In order to find an appropriate Riemannian metric $g$ on $\GLn$, we must discuss the required properties of this \enquote{difference measure}. First, the requirements of objectivity (left-invariance) and isotropy (right-invariance) suggest that the metric $g$ should be \emph{bi-$\On$-invariant}, i.e.\ satisfy
\begin{equation}
\label{eq:OnBiInvariance}
	\mathrlap{\underbrace{g_{QA}(QX,QY) = g_A(X,Y)}_{\text{objectivity}}} \phantom{g_{QA}(QX,QY) =}\overbrace{\phantom{g_A(X,Y)} = g_{AQ}(XQ,YQ)}^{\smash{\text{isotropy}}}
\end{equation}
for all $Q\in\On$, $A\in\GLn$ and $X,Y\in T_A\GLn$, to ensure that $\dg(A,B)=\dg(Q\,A,Q\,B)=\dg(A\,Q,B\,Q)$.

However, these requirements do not sufficiently determine a specific Riemannian metric. For example, \eqref{eq:OnBiInvariance} is satisfied by the metric $\gbad$ defined in \eqref{eq:gbadDefinition} as well as by the metric $\gbadtoo$ with $\gbadtoo_A(X,Y)=\iprod{A^T\,X,A^T\,Y}$. In order to rule out unsuitable metrics, we need to impose further restrictions on $g$. If we consider the distance measure $\dist(\varphi_1,\varphi_2)$ between two deformations $\varphi_1,\varphi_2$ introduced in \eqref{eq:deformationDistanceDefinition}, a number of further invariances can be motivated: if we require that the distance is not changed by the superposition of a homogeneous deformation, i.e.\ that
\[
	\dist(B\cdot\varphi_1,B\cdot\varphi_2) = \dist(\varphi_1,\varphi_2)
\]
for all constant $B\in\GLn$, then $g$ must be \emph{left-$\GLn$-invariant}, i.e.
\begin{equation}
	g_{BA}(B\,X,B\,Y) = g_A(X,Y)
\end{equation}
for all $A,B\in\GLn$ and $X,Y\in T_A\GLn$. The physical interpretation of this invariance requirement is readily visualized in Figure \ref{fig:invarianceExplanation}.
\begin{figure}[h]
	\centering
	\tikzsetnextfilename{invarianceExplanation}
	\begin{tikzpicture}
		\input{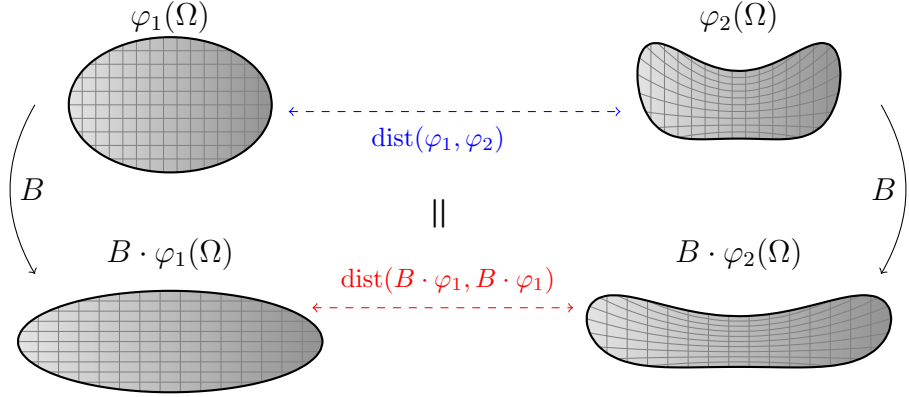}
	\end{tikzpicture}
	\caption{\label{fig:invarianceExplanation}The distance between two deformations should not be changed by the composition with an additional homogeneous transformation $B$: $\dist(\varphi_1,\varphi_2) = \dist(B\cdot\varphi_1, B\cdot\varphi_2)$.}
\end{figure}

It can easily be shown \cite{agn_martin2014minimal} that a Riemannian metric $g$ is left-$\GLn$-invariant\footnote{Of course, the left-$\GLn$-invariance of a metric also implies the left-$\On$-invariance.} as well as right-$\On$-invariant if and only if $g$ is of the form
\begin{equation}
\label{eq:invariantRiemannianMetricDefinition}
	g_A(X,Y) = \isoprod{A\inv X, A\inv Y}\,,
\end{equation}
where $\isoprod{\cdot,\cdot}$ is the fixed inner product on the tangent space $\gln=T_\id\GLn=\Rnn$ at the identity with\label{sectionContains:MetricDefinition}
\begin{align}
	\isoprod{X,Y} &= \mu\,\iprod{\dev_n\sym X, \dev_n\sym Y} + \mu_c\iprod{\skew X, \skew Y} + \tfrac{\kappa}{2}\tr(X) \tr(Y) \label{eq:isotropicInnerProduct}%
\end{align}
for constant positive parameters $\mu,\muc,\kappa>0$, and where $\iprod{X,Y}=\tr(X^TY)$ denotes the canonical inner product on $\gln=\Rnn$.\footnote{If $\mu=\muc=1$ and $\kappa=\frac2n$, then the inner product $\isoprod{\cdot,\cdot}$ is the canonical inner product, and the corresponding metric $g$ is the \emph{canonical left-invariant metric} on $\GLn$ with $g_A(X,Y) = \iprod{A\inv X, A\inv Y}=\tr(X^TA^{-T}A\inv Y)$. Note that this metric differs from the \emph{trace metric} $\gtilde_A(X,Y)=\tr(A\inv X A\inv Y)$, cf.\ \cite{Dolcetti2016}.} A Riemannian metric $g$ defined in this way behaves in the same way on all tangent spaces: for every $A\in\GLpn$, $g$ transforms the tangent space $T_A\GLpn$ at $A$ to the tangent space $T_\id \GLpn=\gln$ at the identity via the left-hand multiplication with $A\inv$ and applies the fixed inner product $\isoprod{\cdot,\cdot}$ on $\gln$ to the transformed tangents, cf.\ Figure \ref{fig:tangentSpaceTransformation}.

In the following, we will always assume that $\GLn$ is endowed with a Riemannian metric of the form \eqref{eq:invariantRiemannianMetricDefinition} unless indicated otherwise.
\begin{figure}
	\centering
	\tikzsetnextfilename{tangentSpaceTransformation}
	\begin{tikzpicture}
		\input{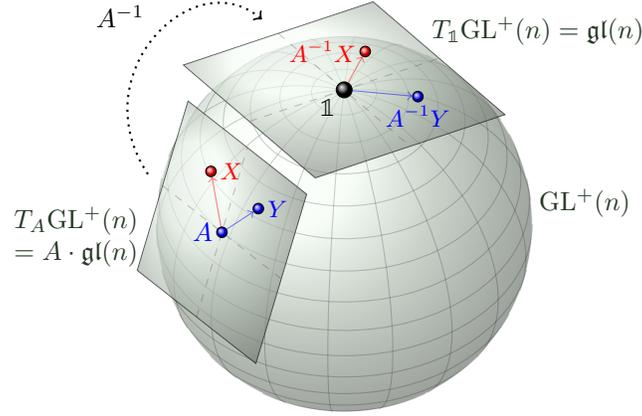}
	\end{tikzpicture}
	\caption{\label{fig:tangentSpaceTransformation}A left-$\GLn$-invariant Riemannian metric on $\GLn$ transforms the tangent space at $A\in\GLpn$ to the tangent space $T_\id \GLpn=\gln$ at the identity and applies a fixed inner product on $\gln$ to the transformed tangents. Thus no tangent space is treated preferentially.}
\end{figure}

In order to find the geodesic distance
\[
	\dg(F,\SOn) = \inf_{Q\in\SOn} \dg(F,Q)
\]
of $F\in\GLpn$ to $\SOn$, we need to consider the \emph{geodesic curves} on $\GLpn$. It has been shown \cite{agn_martin2014minimal,Mielke2002,hackl2003dissipation,Andruchow2011} that every geodesic on $\GLpn$ with respect to the left-$\GLn$-invariant Riemannian metric induced by the inner product \eqref{eq:isotropicInnerProduct} is of the form %
\begin{equation}
\label{eq:generalFormOfGeodesics}
	\gamma_{F}^{\xi}(t) = F \, \exp(t(\sym\xi - \omegatfrac \skew\xi)) \, \exp(t(1+\omegatfrac)\skew\xi)
\end{equation}
with $F\in\GLpn$ and some $\xi\in\gln$, where $\exp$ denotes the matrix exponential.\footnote{The mapping $\xi\mapsto\exp_{\mathrm{geod}}(\xi) \colonequals \gamma_F^\xi(1) = F \, \exp(\sym\xi - \omegatfrac \skew\xi) \, \exp((1+\omegatfrac)\skew\xi)$ is also known as the \emph{geodesic exponential} function at $F$. Note that in general $\exp_{\mathrm{geod}}(\xi) \neq F\cdot\exp(\xi)$ if $\xi$ is not normal (i.e.\ if $\xi\xi^T\neq \xi^T\xi$), thus the geodesic curves are generally not one-parameter groups of the form $t\mapsto F\,\exp(t\,\xi)$, in contrast to bi-invariant metrics on Lie groups (e.g.\ $\SOn$ with the canonical bi-invariant metric \cite{Moakher2002}).} These curves are characterized by the \emph{geodesic equation}
\begin{align}
		\dot \zeta = \tfrac{\mu+\mu_c}{2\mu}(\zeta^T\zeta - \zeta\zeta^T)\,, \qquad
		\zeta \colonequals \gamma^{-1}\dot \gamma\,.
\end{align}
Since the geodesic curves are defined globally, $\GLpn$ is \emph{geodesically complete} with respect to the metric $g$. We can therefore apply the Hopf-Rinow theorem \cite{hopf1931begriff,agn_martin2014minimal} to find that for all $F,P\in\GLpn$ there exists a \emph{length minimizing geodesic $\gamma_{F}^{\xi}$} connecting $F$ and $P$. Without loss of generality, we can assume that $\gamma_{F}^{\xi}$ is defined on the interval $[0,1]$. Then the end points of $\gamma_{F}^{\xi}$ are
\[
	\gamma_{F}^{\xi}(0)=F \quad\text{ and }\quad P = \gamma_{F}^{\xi}(1) = F \, \exp(\sym\xi - \omegatfrac \skew\xi) \, \exp((1+\omegatfrac)\skew\xi)\,,
\]
and the length of the geodesic $\gamma_F^\xi$ starting in $F$ with initial tangent $F\,\xi\in T_F\GLpn$ (cf.\ \eqref{eq:generalFormOfGeodesics} and Figure \ref{fig:riemannianDistanceToSOn}) is given by \cite{agn_martin2014minimal} \label{sectionContains:ConsiderationsOnMinimizingGeodesics}
\[
	\len(\gamma_{F}^{\xi}) = \isonorm{\xi}\,.
\]
The geodesic distance between $F$ and $P$ can therefore be characterized as
\[
	\dg(F,P) = \min\{\isonorm{\xi} \setvert\; \xi\in\gln:\, \gamma_{F}^{\xi}(1) = P\}\,,
\]
that is the minimum of $\isonorm{\xi}$ over all $\xi\in\gln$ which connect $F$ and $P$, i.e.\ satisfy
\begin{equation}
\label{eq:geodesicEndPoint}
	\exp(\sym\xi - \omegatfrac \skew\xi) \, \exp((1+\omegatfrac)\skew\xi) = F\inv P\,.
\end{equation}

Although some numerical computations have been employed \cite{zacur2013multivariate} to approximate the geodesic distance in the special case of the canonical left-$\GLn$-invariant metric, i.e.\ for $\mu=\muc=1$, $\kappa=\frac2n$, there is no known closed form solution to the highly nonlinear system \eqref{eq:geodesicEndPoint} in terms of $\xi$ for given $F,P\in\GLpn$ and thus no known method of directly computing $\dg(F,P)$ in the general case exists. However, this parametrization of the geodesic curves will still allow us to obtain a lower bound on the distance of $F$ to $\SOn$.

\subsection[The geodesic distance to $\SOn$]{\boldmath The geodesic distance to $\SOn$}
\label{section:mainResults}
Having defined the geodesic distance on $\GLpn$, we can now consider the geodesic strain measure, which is the geodesic distance of the deformation gradient $F$ to $\SOn$:
\begin{equation}
	\dg(F,\SOn) = \inf_{Q\in\SOn} \dg(F,Q)\,.
\end{equation}

Without explicit computation of this distance, the left-$\GLn$-invariance and the right-$\On$-invariance of the metric $g$ immediately allow us to show the \emph{inverse deformation symmetry} of the geodesic strain measure:
\begin{alignat}{2}
	\dg(F,\SOn) &= \inf_{Q\in\SOn} \dg(F,Q) &&= \inf_{Q\in\SOn} \dg(F\inv F,F\inv Q)\nnl
	&= \inf_{Q\in\SOn} \dg(\id,F\inv Q)\; &&= \inf_{Q\in\SOn} \dg(Q^TQ,F\inv Q)\nnl
	&= \inf_{Q\in\SOn} \dg(Q^T,F\inv) &&= \;\dg(F\inv,\SOn)\,. \label{eq:inverseDeformationSymmetry}
\end{alignat}
This symmetry property demonstrates at once that the \emph{Eulerian} (spatial) and the \emph{Lagrangian} (referential) points of view are equivalent with respect to the geodesic strain measure: in the Eulerian setting, the inverse $F\inv$ of the deformation gradient appears more naturally\footnote{Note that Cauchy originally introduced the tensors $C\inv$ and $B\inv$ in his investigations of the nonlinear strain \cite{cauchy1827strain,cauchy1841memoire,freed2014soft,rougee1997mecanique}, where $C=F^TF=U^2$ is the right Cauchy-Green deformation tensor \cite{green1841propagation,freed2014soft} and $B=FF^T=V^2$ is the Finger tensor. Piola also formulated an early nonlinear elastic law in terms of $C\inv$, cf.\ \cite[p.\ 347]{truesdell65}.}, whereas $F$ is used in the Lagrangian frame (cf.\ Figure \ref{fig:lagrangeEuler}). Equality \eqref{eq:inverseDeformationSymmetry} shows that both points of view can equivalently be taken if the geodesic strain measure is used. As we will see later on (Remark \ref{remark:mainResultLeftBiotStretchFormulation}), the equality $\dg(B,\SOn)=\dg(C,\SOn)$ also holds for the right Cauchy-Green deformation tensor $C=F^TF=U^2$ and the Finger tensor $B=FF^T=V^2$, further indicating the independence of the geodesic strain measure from the chosen frame of reference. This property is, however, not unique to geodesic (or logarithmic) strain measures; for example, the Frobenius norm
\[
	\norm{\widetilde{E}_{\afrac12}(U)} = \tfrac12\norm{U-U\inv} = \tfrac12\norm{V-V\inv}
\]
of the Ba\v{z}ant approximation $\widetilde{E}_{\afrac12} = \frac12\,(U-U\inv)$, cf.\ \eqref{eq:bazantDefinition} and \cite{bazant1998}, which can be considered a \enquote{quasilogarithmic} strain measure, fulfils the inverse deformation symmetry as well.\footnote{The quantity $\frac{1}{\sqrt{2}}\norm{U-U\inv}$ is suggested as a measure of \emph{strain magnitude} by Truesdell and Toupin \cite[p.\ 266]{truesdell60}.} However, it is not satisfied for the Euclidean distance to $\SOn$: in general,
\begin{equation}
	\norm{U-\id} = \disteuc(F,\SOn) \neq \disteuc(F\inv,\SOn) = \norm{V\inv - \id}\,.
\end{equation}

\begin{figure}
	\centering
	\tikzsetnextfilename{lagrangeEuler}
	\begin{tikzpicture}
		\input{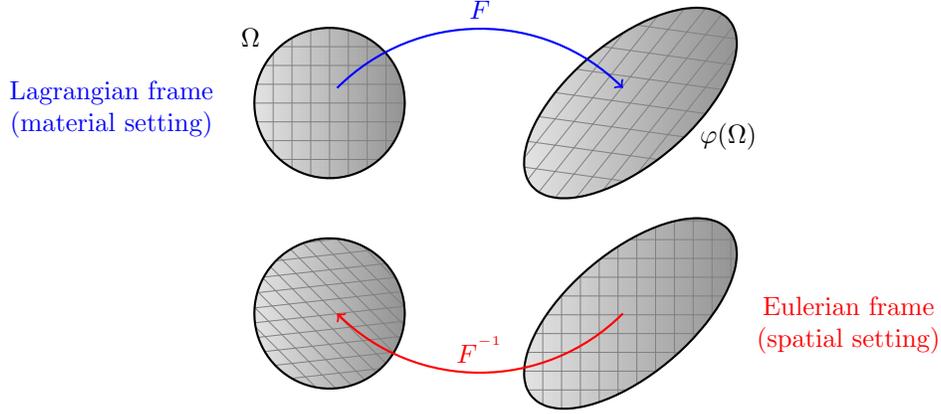}
	\end{tikzpicture}
	\caption{\label{fig:lagrangeEuler}The Lagrangian and the Eulerian point of view are equivalently represented by the geodesic strain measure: $\dg(F,\SOn) = \dg(F\inv,\SOn)$.}
\end{figure}

Now, let $F=R\,U$ denote the polar decomposition of $F$ with $U\in\PSymn$ and $R\in\SOn$. In order to establish a simple upper bound on the geodesic distance $\dg(F,\SOn)$, we construct a particular curve $\gamma_R$ connecting $F$ to its orthogonal factor $R\in\SOn$ and compute its length $\len(\gamma_R)$. For
\[
	\gamma_R(t) \colonequals R\,\exp((1-t)\,\log U)\,,
\]
where $\log U\in\Symn$ is the principal matrix logarithm of $U$, we find
\[
	\gamma_R(0) = R\,\exp(\log U) = R\,U = F \qquad\text{ and }\qquad \gamma_R(1) = R\,\exp(0) = R \in \SOn\,.
\]
It is easy to confirm that $\gamma_R$ is in fact a geodesic as given in \eqref{eq:generalFormOfGeodesics} with $\xi=\log U\in\Symn$.
Since
\[
	\gamma_R\inv(t) \gammadot_R(t) = (R\,\exp((1-t)\,\log U))\inv \;R\,\exp((1-t)\,\log U)\cdot(-\log U) = -\log U\,,
\]
the length of $\gamma_R$ is given by
\begin{align}
	\len(\gamma_R) &= \int_0^1 \sqrt{g_{\gamma_R(t)} (\gammadot_R(t),\gammadot_R(t))} \;\dt\\
	&= \int_0^1 \sqrt{\isoprod{\gamma_R(t)\inv \gammadot_R(t), \gamma_R(t)\inv \gammadot_R(t)}} \;\dt\nnl
	&= \int_0^1 \sqrt{\isoprod{-\log U, -\log U}} \;\dt = \int_0^1 \isonorm{\log U} \,\dt = \isonorm{\log U}\,.\nonumber
\end{align}
We can thereby establish the \emph{upper bound}
\begin{align}
	\dg^2(F,\SOn) &= \inf_{Q\in\SOn} \dg^2(F,Q) \leq \dg^2(F,R) \label{eq:upperBoundFirstInequality}\\
	&\leq \len^2(\gamma_R) = \isonorm{\log U}^2 = \mu\,\norm{\dev_n\log U}^2 + \frac{\kappa}{2}\,[\tr(\log U)]^2 \label{eq:upperBound}
\end{align}
for the geodesic distance of $F$ to $\SOn$.

Our task in the remainder of this section is to show that the right hand side of inequality \eqref{eq:upperBound} is \emph{also a lower bound} for the (squared) geodesic strain measure, i.e.\ that, altogether,
\[
	\dg^2(F,\SOn) = \mu\,\norm{\dev_n\log U}^2 + \frac{\kappa}{2}\,[\tr(\log U)]^2\,.
\]

However, while the orthogonal polar factor $R$ is the element of best approximation in the Euclidean case (for $\mu=\mu_c=1$, $\kappa=\frac2n$) due to Grioli's Theorem, it is not clear whether $R$ is indeed the element in $\SOn$ with the shortest geodesic distance to $F$ (and thus whether equality holds in \eqref{eq:upperBoundFirstInequality}). Furthermore, it is not even immediately obvious that the geodesic distance between $F$ and $R$ is actually given by the right hand side of \eqref{eq:upperBound}, since a shorter connecting geodesic might exist (and hence inequality might hold in \eqref{eq:upperBound}).

Nonetheless, the following fundamental logarithmic minimization property\footnote{Of course, the application of such minimization properties to elasticity theory has a long tradition: Leonhard Euler, in the appendix \enquote{De curvis elasticis} to his 1744 book \enquote{Methodus inveniendi lineas curvas maximi minimive proprietate gaudentes sive solutio problematis isoperimetrici latissimo sensu accepti} \cite{euler1774methodus,oldfather1933leonhard}, already proclaimed that \quoteref{\quoteesc{\ldots} since the fabric of the universe is most perfect, and is the work of a most wise creator, nothing whatsoever takes place in the universe in which some rule of maximum and minimum does not appear.}} of the orthogonal polar factor, combined with the computations in Section \ref{section:manifoldIntroduction}, allows us to show that \eqref{eq:upperBound} is indeed also a lower bound for $\dg(F,\SOn)$.
\begin{proposition}
\label{prop:infLog}
Let $F=R\,\sqrt{F^TF}$ be the polar decomposition of $F\in\GLpn$ with $R\in\SOn$ and let $\norm{\,.\,}$ denote the Frobenius norm on $\Rnn$. Then
\[
	\inf_{Q\in\SOn} \norm{\sym \Log (Q^TF)} = \norm{\sym \log (R^TF)} = \norm{\log \sqrt{F^TF}}\,,
\]
where
\[
	\inf_{Q\in\SOn} \norm{\sym \Log (Q^TF)} \colonequals \inf_{Q\in\SOn} \inf \{\norm{\sym X} \;\setvert\; X\in\Rnn\,, \; \exp(X)=Q^TF\}
\]
is defined as the infimum of $\norm{\sym \,.\,}$ over \enquote{all real matrix logarithms} of $Q^TF$.
\end{proposition}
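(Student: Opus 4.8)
The plan is to dispatch the two right-hand equalities first and then focus all effort on the matching lower bound. Since $R^TF = R^TR\sqrt{F^TF} = \sqrt{F^TF} = U \in \PSymn$, the principal logarithm $\log(R^TF) = \log U$ is already symmetric, so $\norm{\sym\log(R^TF)} = \norm{\log U} = \norm{\log\sqrt{F^TF}}$; moreover $X = \log U$ is itself a real logarithm of $R^TF$, which shows the infimum on the left is at most $\norm{\log U}$. It then remains to prove: for every $Q\in\SOn$ and every $X\in\Rnn$ with $\exp(X) = Q^TF$ one has $\norm{\sym X}\geq\norm{\log U}$. The observation driving the whole argument is that the singular values of $Q^TF$ are exactly the eigenvalues $\lambda_1\geq\dots\geq\lambda_n>0$ of $U$, because $(Q^TF)^T(Q^TF) = F^TF = U^2$; hence the singular values $\sigma_i$ of $\exp(X)$ satisfy $\sigma_i = \lambda_i$.

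The heart of the proof is a majorization inequality valid for \emph{any} $X\in\Rnn$: the logarithms of the singular values of $\exp(X)$ are majorized by the decreasingly ordered eigenvalues $s_1\geq\dots\geq s_n$ of $\sym X$,
\[
	\sum_{i=1}^k \log\sigma_i(\exp X) \;\leq\; \sum_{i=1}^k s_i \quad (k=1,\dots,n)\,, \qquad\text{with equality for } k=n\,.
\]
I would first establish the case $k=1$, i.e.\ the spectral-norm bound $\sigma_1(\exp X) = \norm{\exp X}_2 \leq \exp(s_1)$, by a logarithmic-norm estimate: setting $v(t) = \exp(tX)\,v$ for $v\in\R^n$ gives $\ddt\norm{v(t)}^2 = 2\,\iprod{\sym X\,v(t),\,v(t)} \leq 2\,s_1\,\norm{v(t)}^2$, so Gr\"onwall's inequality yields $\norm{\exp(X)\,v}\leq e^{s_1}\norm{v}$. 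To promote this to general $k$, I would pass to the $k$-th exterior (compound) power, using $\Lambda^k\exp(X) = \exp(D_k X)$ for the additive compound $D_k$, together with $\sym(D_k X) = D_k(\sym X)$, the fact that the largest eigenvalue of $D_k(\sym X)$ equals $s_1+\dots+s_k$, and $\sigma_1(\Lambda^k B) = \prod_{i=1}^k\sigma_i(B)$. Applying the $k=1$ bound to $\Lambda^k\exp(X)$ then gives $\prod_{i=1}^k\sigma_i(\exp X)\leq\exp(s_1+\dots+s_k)$, which is the claimed inequality after taking logarithms; equality at $k=n$ holds because both sides equal $\tr(\sym X) = \tr X = \log\det(Q^TF)$.

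With the majorization in hand, the conclusion is a convexity argument. Since $t\mapsto t^2$ is convex, Karamata's inequality (Schur-convexity) gives
\[
	\sum_{i=1}^n \big(\log\sigma_i(\exp X)\big)^2 \;\leq\; \sum_{i=1}^n s_i^2 \;=\; \norm{\sym X}^2\,,
\]
and since $\sigma_i(\exp X) = \lambda_i$ the left-hand side equals $\sum_{i=1}^n (\log\lambda_i)^2 = \norm{\log U}^2$. Hence $\norm{\sym X}\geq\norm{\log U}$ for every admissible $Q$ and $X$, the desired lower bound; combined with the upper bound from the first paragraph, this proves the proposition. I expect the main obstacle to be the majorization inequality itself: the spectral-norm estimate is elementary, but obtaining the full chain for all $k$ — and, crucially, for arbitrary \emph{not necessarily normal} real logarithms $X$, which is exactly the generality the statement demands — is what forces the compound-matrix machinery. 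The final passage from majorization to the squared-norm inequality via convexity is then routine.
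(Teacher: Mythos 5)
Your proof is correct, and it is worth noting at the outset that the paper itself gives no proof of Proposition \ref{prop:infLog}: the statement is imported from the literature, with the cases $n=2,3$ attributed to an argument via the sum-of-squared-logarithms inequality and the general case to Lankeit, Neff and Nakatsukasa. What you have written is essentially a self-contained reconstruction of the general-dimension argument, specialized to the Frobenius norm. The two right-hand equalities and the upper bound are immediate, as you say, since $R^TF=U\in\PSymn$ and $\log U$ is a symmetric real logarithm of it. For the lower bound, your key observation that $(Q^TF)^T(Q^TF)=F^TF$ forces the singular values of $\exp X$ to be the eigenvalues $\lambda_i$ of $U$ for \emph{every} $Q$ and every real logarithm $X$ is exactly right, and the weak log-majorization $\sum_{i\leq k}\log\sigma_i(\exp X)\leq\sum_{i\leq k}\lambda_i(\sym X)$ with equality at $k=n$ is the correct key lemma. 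Your derivation of it is sound: the $k=1$ case is the classical logarithmic-norm bound $\norm{\exp X}_2\leq e^{\lambda_{\max}(\sym X)}$ via Gr\"onwall, and the lift to general $k$ through compound matrices uses only standard identities ($\Lambda^k\exp X=\exp(D_kX)$ because $\Lambda^k$ is a Lie group homomorphism, $\sym(D_kX)=D_k(\sym X)$ by linearity and compatibility with transposition, $\lambda_{\max}(D_k S)=s_1+\dots+s_k$, and $\sigma_1(\Lambda^kB)=\prod_{i\leq k}\sigma_i(B)$), while equality of the total sums follows from $\det\exp X=e^{\tr X}$. Karamata with $t\mapsto t^2$ then closes the argument. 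Compared with the sum-of-squared-logarithms route, which is elementary but confined to $n=2,3$ and to this specific norm, your majorization argument works in every dimension and, if one replaces Karamata by Ky Fan dominance, extends verbatim to all unitarily invariant norms — precisely the generality of the cited reference. The only point I would make explicit is that for some $Q$ the set of real logarithms of $Q^TF$ may be empty (a real matrix need not admit a real logarithm); by the usual convention the inner infimum is then $+\infty$, so this does not disturb the bound, but it deserves a sentence.
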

Proposition \ref{prop:infLog}, which can be seen as the natural logarithmic analogue of Grioli's Theorem (cf.\ Section \ref{sectionContains:griolisTheorem}), was first shown for dimensions $n=2,3$ by Neff et al.\ \cite{agn_neff2014logarithmic} using the so-called sum-of-squared-logarithms inequality \cite{agn_birsan2013sum,agn_pompe2015generalised,agn_dannan2014sum,agn_borisov2015sum}. A generalization to all unitarily invariant norms and complex logarithms for arbitrary dimension was given by Lankeit, Neff and Nakatsukasa \cite{agn_lankeit2014minimization}. We also require the following corollary involving the weighted Frobenius norm, which is not orthogonally invariant.\footnote{While $\isonorm{Q^TXQ}=\isonorm{X}$ for all $X\in\Rnn$ and $Q\in\On$, the orthogonal invariance requires the equalities $\isonorm{QX}=\isonorm{XQ}=\isonorm{X}$, which do not hold in general.}
\begin{corollary}
\label{cor:infLogWeighted}
Let
\[
	\isonorm{X}^2 = \mu\,\norm{\dev_n\sym X}^2 + \muc\,\norm{\skew X}^2 + \frac\kappa2\, [\tr(X)]^2\,, \qquad \mu,\muc,\kappa>0\,,
\]
for all $X\in\Rnn$, where $\norm{\,.\,}$ is the Frobenius matrix norm. Then
\[
	\inf_{Q\in\SOn} \isonorm{\sym \Log (Q^TF)} = \isonorm{\log \sqrt{F^TF}}\,.
\]
\end{corollary}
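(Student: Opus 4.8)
The plan is to reduce the weighted statement directly to Proposition \ref{prop:infLog}, exploiting the fact that the argument $\sym\Log(Q^TF)$ is always symmetric. First I would note that for any real matrix $X$ the symmetric part $\sym X$ lies in $\Symn$, so $\skew(\sym X)=0$ and the spin-modulus term in $\isonorm{\cdot}^2$ contributes nothing: writing $S\colonequals\sym X$ for an admissible logarithm $X$ (one with $\exp X=Q^TF$), we have
\[
	\isonorm{S}^2 = \mu\,\norm{\dev_n S}^2 + \frac\kappa2\,[\tr S]^2\,.
\]
Thus only the deviatoric and spherical parts of $S$ are relevant, and the parameter $\muc$ drops out entirely.

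Next I would show that the spherical (trace) contribution is \emph{constant} over all admissible pairs $(Q,X)$. Since $\det Q=1$ and $\det(\exp X)=\exp(\tr X)$, every $X$ with $\exp X=Q^TF$ satisfies $\exp(\tr X)=\det(Q^TF)=\det F$, so $\tr S=\tr X=\log\det F$ is pinned, independently of $Q$ and of the chosen branch of the logarithm. Hence $\frac\kappa2[\tr S]^2=\frac\kappa2[\log\det F]^2$ is a fixed constant, and it coincides with the corresponding term for $\log\sqrt{F^TF}$ because $\tr\log\sqrt{F^TF}=\log\det\sqrt{F^TF}=\log\det F$.

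With the trace pinned, the only quantity left to minimize is the deviatoric part. Here I would invoke the orthogonal Frobenius splitting $S=\dev_n S+\frac1n\tr(S)\,\id$, which gives $\norm{\dev_n S}^2=\norm{S}^2-\frac1n[\tr S]^2$; since $\tr S$ is the fixed constant $\log\det F$, minimizing $\norm{\dev_n S}$ is equivalent to minimizing $\norm{S}=\norm{\sym\Log(Q^TF)}$. Proposition \ref{prop:infLog} supplies exactly this infimum, namely $\norm{\log\sqrt{F^TF}}$, attained at $Q=R$ with the real logarithm $X=\log U$. Subtracting the fixed trace term then yields $\inf_{Q\in\SOn}\norm{\dev_n\sym\Log(Q^TF)}^2=\norm{\dev_n\log\sqrt{F^TF}}^2$.

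Assembling the two contributions, the infimum of $\isonorm{\sym\Log(Q^TF)}^2$ equals $\mu\,\norm{\dev_n\log\sqrt{F^TF}}^2+\frac\kappa2[\log\det F]^2$; and since $\log\sqrt{F^TF}=\log U\in\Symn$ also has vanishing skew part, the right-hand side $\isonorm{\log\sqrt{F^TF}}^2$ reduces to this very expression, closing the argument. There is no real analytic obstacle once Proposition \ref{prop:infLog} is in hand: the entire content is the bookkeeping observation that, on symmetric arguments, the weighted norm differs from the Frobenius norm only through the spherical part, and that this part is forced to equal $\log\det F$ for every admissible logarithm. The one place demanding a little care is justifying that the infimum decouples from the spherical term --- which is legitimate here precisely because that term is genuinely constant, not merely bounded below.
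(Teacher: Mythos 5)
Your proposal is correct and follows essentially the same route as the paper's proof: both pin the trace via $\det\exp(X)=e^{\tr X}$ and $\det Q=1$, use the orthogonal splitting $\norm{\dev_n S}^2=\norm{S}^2-\tfrac1n[\tr S]^2$ to reduce the weighted infimum to the unweighted one, and then invoke Proposition \ref{prop:infLog}. The only cosmetic difference is that the paper absorbs the constant trace term by rewriting $\mu\,\norm{\dev_n Y}^2+\tfrac\kappa2[\tr Y]^2=\mu\,\norm{Y}^2+\tfrac{n\kappa-2\mu}{2n}[\tr Y]^2$ before taking the infimum, whereas you keep the two contributions separate and argue the decoupling explicitly.
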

\begin{proof}
We first note that the equality $\det\exp(X)=e^{\tr(X)}$ holds for all $X\in\Rnn$. Since $\det Q = 1$ for all $Q\in\SOn$, this implies that for all $X\in\Rnn$ with $\exp(X)=Q^TF$,
\[
	\tr(\sym X) = \tr(X) = \ln(\det(\exp(X))) = \ln(\det(Q^TF)) = \ln(\det F)\,.
\]
Therefore\footnote{Observe that $\mu\,\norm{\dev_n Y}^2 + \frac\kappa2\, [\tr(Y)]^2 = \mu\,\norm{Y}^2 + \frac{n\,\kappa - 2\mu}{2n}\, [\tr(Y)]^2$ for all $Y\in\Rnn$.}
\begin{align*}
	\isonorm{\sym X}^2 &= \mu\,\norm{\dev_n \sym X}^2 + \frac\kappa2\, [\tr(\sym X)]^2\\
	&= \mu\,\norm{\sym X}^2 + \frac{n\,\kappa - 2\mu}{2n}\, [\tr(\sym X)]^2 = \mu\,\norm{\sym X}^2 + \frac{n\,\kappa - 2\mu}{2n}\, (\ln(\det F))^2
\end{align*}
and finally
\begin{align}
	&\hspace{-1.96em}\inf_{Q\in\SOn} \isonorm{\sym \Log (Q^TF)}^2\\
	&= \inf_{Q\in\SOn} \inf \{\isonorm{\sym X}^2 \setvert X\in\Rnn\,, \; \exp(X)=Q^TF\}\nnl
	&= \inf_{Q\in\SOn} \inf \{\mu\,\norm{\sym X}^2 + \frac{n\,\kappa - 2\mu}{2n} \,(\ln(\det F))^2 \setvert X\in\Rnn\,, \; \exp(X)=Q^TF\}\nnl
	&= \mu \inf_{Q\in\SOn} \inf \{\norm{\sym X}^2 \setvert X\in\Rnn\,, \; \exp(X)=Q^TF\} + \frac{n\,\kappa - 2\mu}{2n}\, (\ln(\det F))^2\nnl
	&= \mu \norm{\log \sqrt{F^TF}}^2 + \frac{n\,\kappa - 2\mu}{2n}\, (\ln(\det F))^2\nnl
	&= \mu \norm{\log \sqrt{F^TF}}^2 + \frac{n\,\kappa - 2\mu}{2n}\, [\tr(\log\sqrt{F^TF})]^2\nnl
	&= \mu\,\norm{\dev_n \log \sqrt{F^TF}}^2 + \frac\kappa2\, [\tr(\log \sqrt{F^TF})]^2 = \isonorm{\log \sqrt{F^TF}}^2\,. \nonumber\qedhere
\end{align}
\end{proof}
Note that Corollary \ref{cor:infLogWeighted} also implies the slightly weaker statement
\[
	\inf_{Q\in\SOn} \isonorm{\Log (Q^TF)} = \isonorm{\log \sqrt{F^TF}}
\]
by using the simple estimate $\isonorm{X}^2 \geq \isonorm{\sym X}^2$.\\[1em]
We are now ready to prove our main result.
\begin{theorem}
\label{theorem:mainResult}
Let $g$ be the left-$\GLn$-invariant, right-$\On$-invariant Riemannian metric on $\GL(n)$ defined by
\[
	g_A(X,Y) = \isoprod{A\inv X, A\inv Y}\,, \qquad \mu,\muc,\kappa>0\,,
\]
for $A\in\GLn$ and $X,Y\in\Rnn$, where
\begin{equation}
\label{eq:isoprodInMainResult}
	\isoprod{X,Y} = \mu\,\iprod{\dev_n\sym X, \dev_n\sym Y} + \mu_c\iprod{\skew X, \skew Y} + \tfrac{\kappa}{2}\tr(X) \tr(Y)\,.
\end{equation}
Then for all $F\in\GLpn$, the geodesic distance of $F$ to the special orthogonal group $\SOn$ induced by $g$ is given by
\begin{align}
	\dg^2(F,\SO(n)) = \mu\,\norm{\dev_n\log U}^2 + \frac{\kappa}{2}\,[\tr(\log U)]^2\,, \label{eq:mainResultEquation}
\end{align}
where $\log$ is the principal matrix logarithm, $\tr(X) = \sum_{i=1}^n X_{i,i}$ denotes the trace and $\dev_n X = X - \frac1n\tr(X)\cdot\id$ is the $n$-dimensional deviatoric part of $X\in\Rnn$. The orthogonal factor $R\in\SOn$ of the polar decomposition $F=R\,U$ is the unique element of best approximation in $\SOn$, i.e.
\[
	\dg(F,\SOn) = \dg(F,R) = \dg(R^TF,\id) = \dg(U,\id)\,.
\]
In particular, the geodesic distance does not depend on the \emph{spin modulus} $\muc$.
\end{theorem}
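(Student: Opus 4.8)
The upper bound \eqref{eq:upperBound} has already been established by exhibiting the explicit geodesic $\gamma_R$, so the plan is to prove the matching \emph{lower bound} $\dg^2(F,\SOn)\geq\isonorm{\log U}^2$ and then read off equality, the optimality of $R$, and the $\muc$-independence. The decisive idea is that, although the endpoint equation \eqref{eq:geodesicEndPoint} cannot be solved in closed form, we do not need to solve it: it suffices to recognise the symmetric part of any connecting tangent $\xi$ as (the symmetric part of) a real matrix logarithm of a rotation applied to $F\inv$, which reduces the lower bound to the logarithmic minimization Corollary \ref{cor:infLogWeighted}.

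First I would fix $Q\in\SOn$ and an arbitrary $\xi\in\gln$ with $\gamma_F^\xi(1)=Q$, recalling that $\dg(F,Q)$ is the minimum of $\isonorm{\xi}$ over all such $\xi$. Writing the endpoint condition \eqref{eq:geodesicEndPoint} as $\exp(\sym\xi-\omegatfrac\skew\xi)\,\exp((1+\omegatfrac)\skew\xi)=F\inv Q$ and noting that the right factor $\exp((1+\omegatfrac)\skew\xi)$ is a rotation (the exponential of a skew matrix), I would peel it off to obtain
\[
	\exp(\sym\xi-\omegatfrac\skew\xi)=F\inv\widehat Q\,,\qquad \widehat Q\colonequals Q\,\exp(-(1+\omegatfrac)\skew\xi)\in\SOn\,.
\]
Thus $A\colonequals\sym\xi-\omegatfrac\skew\xi$ is a real logarithm of $F\inv\widehat Q$ whose symmetric part is exactly $\sym A=\sym\xi$, with $\tr A=\tr\xi$.

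The lower bound then follows by discarding the skew contribution and invoking the corollary. Since $\skew(\sym A)=0$,
\[
	\isonorm{\xi}^2=\isonorm{\sym\xi}^2+\muc\,\norm{\skew\xi}^2\geq\isonorm{\sym\xi}^2=\isonorm{\sym A}^2\geq\inf_{\widehat Q\in\SOn}\isonorm{\sym\Log(F\inv\widehat Q)}^2\,.
\]
Because $F\inv\widehat Q=(\widehat Q^TF)\inv$ and the weighted norm is even, the substitution $X\mapsto -X$ identifies the real logarithms of $F\inv\widehat Q$ with those of $\widehat Q^TF$, so the last infimum equals $\inf_{\widehat Q\in\SOn}\isonorm{\sym\Log(\widehat Q^TF)}^2=\isonorm{\log\sqrt{F^TF}}^2=\isonorm{\log U}^2$ by Corollary \ref{cor:infLogWeighted}. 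As $Q$ and $\xi$ were arbitrary, taking the infimum yields $\dg^2(F,\SOn)\geq\isonorm{\log U}^2$, which together with \eqref{eq:upperBoundFirstInequality}--\eqref{eq:upperBound} gives the claimed equality.

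Finally, uniqueness of $R$ and independence of $\muc$ fall out of tracing equality through the chain. If $Q\in\SOn$ realises $\dg(F,Q)=\isonorm{\log U}$, then $\muc\,\norm{\skew\xi}^2=0$ forces $\skew\xi=0$ (as $\muc>0$); hence $\xi$ is symmetric, $\widehat Q=Q$, and $-\xi$ is a symmetric real logarithm of $\widehat Q^TF=Q^TF$ attaining the infimum in Proposition \ref{prop:infLog}. The uniqueness of the minimizing rotation there --- the exact logarithmic analogue of Grioli's Theorem --- then gives $Q=R$ and $\xi=-\log U$, recovering the geodesic $\gamma_R$; and since the common value $\mu\,\norm{\dev_n\log U}^2+\frac\kappa2\,[\tr(\log U)]^2$ contains no $\muc$, the distance is independent of the spin modulus. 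The genuine analytic difficulty is entirely located in Proposition \ref{prop:infLog}/Corollary \ref{cor:infLogWeighted} (ultimately the sum-of-squared-logarithms inequality), which is already available; within the present argument the only delicate points are the structural reduction of $\isonorm{\xi}$ to $\isonorm{\sym A}$ and the inverse/sign bookkeeping required to match the corollary's $\widehat Q^TF$ form.
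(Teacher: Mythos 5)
Your proposal is correct, and its core --- the lower bound $\dg^2(F,\SOn)\geq\isonorm{\log U}^2$ obtained by peeling the rotation factor $\exp((1+\omegatfrac)\skew\xi)$ off the endpoint equation, recognising $\sym\xi$ (up to sign) as the symmetric part of a real logarithm of a rotated copy of $F$, and invoking Corollary \ref{cor:infLogWeighted} --- is exactly the paper's argument; the only bookkeeping difference is that you keep $F\inv\widehat Q$ and pass to $\widehat Q^TF$ via the substitution $X\mapsto -X$ and the evenness of the norm, while the paper inverts both sides of the endpoint equation first. Where you genuinely diverge is the uniqueness step. The paper assumes $\skew\xi\neq0$ for a minimizing geodesic ending at a best-approximating $\Qhat$, replaces the spin modulus by a smaller $\mutilde_c<\muc$, and derives a contradiction from the facts that the curve strictly shortens while the distance to $\SOn$ is unchanged; you instead trace equality through the chain $\isonorm{\xi}^2=\isonorm{\sym\xi}^2+\muc\,\norm{\skew\xi}^2\geq\isonorm{\sym\xi}^2\geq\isonorm{\log U}^2$, which forces $\muc\,\norm{\skew\xi}^2=0$ directly. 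This is shorter and avoids comparing metrics with different parameters altogether. One small repair is needed at the very end: you attribute the identification $Q=R$ to \enquote{the uniqueness of the minimizing rotation} in Proposition \ref{prop:infLog}, but that proposition only asserts the value of the infimum, not uniqueness of the minimizer. No such appeal is necessary: once $\skew\xi=0$ you have $Q^TF=\exp(-\xi)\in\PSymn$, and the uniqueness of the polar decomposition $F=R\,U$ immediately gives $Q=R$ and $\exp(-\xi)=U$ --- which is how the paper concludes.
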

\begin{remark}[Uniqueness of the metric]
We remark once more that the Riemannian metric considered in Theorem \ref{theorem:mainResult} is not chosen arbitrarily: every left-$\GLn$-invariant, right-$\On$-invariant Riemannian metric on $\GL(n)$ is of the form given in \eqref{eq:isoprodInMainResult} for some choice of parameters $\mu,\muc,\kappa>0$ \cite{agn_martin2014minimal}.
\end{remark}
\begin{remark}\label{remark:mainResultLeftBiotStretchFormulation}
Since the weighted Frobenius norm on the right hand side of equation \eqref{eq:mainResultEquation} only depends on the eigenvalues of $U=\sqrt{F^TF}$, the result can also be expressed in terms of the left Biot-stretch tensor $V=\sqrt{FF^T}$, which has the same eigenvalues as $U$:
\begin{align}
	\dg^2(F,\SO(n)) = \mu\,\norm{\dev_n\log V}^2 + \frac{\kappa}{2}\,[\tr(\log V)]^2\,.
\end{align}
Applying the above formula to the case $F=P$ with $P\in\PSymn$, we find $\sqrt{P^TP}=\sqrt{PP^T}=P$ and therefore
\begin{equation}
\label{eq:logDistanceToIdFromPSym}
	\dist^2(P,\SOn)=\dist^2(P,\,\id)=\mu\,\norm{\dev_n\log P}^2 + \frac{\kappa}{2}\,[\tr(\log P)]^2\,,
\end{equation}
since $\id$ is the orthogonal polar factor of $P$. For the tensors $U$ and $V$, the right Cauchy-Green deformation tensor $C=F^TF=U^2$ and the Finger tensor $B=FF^T=V^2$, we thereby obtain the equalities
\begin{alignat}{4}
	&\dg(B,\SOn) &&= \dg(B,\id) &&= \dg(B\inv,\id)\\
	&&&= \dg(C,\id) &&= \dg(C\inv,\id) &&= \dg(C,\SOn)\nnl
	\text{and}\quad &\dg(V,\SOn) &&= \dg(V,\id) &&= \dg(V\inv,\id)\\
		&&&= \dg(U,\id) &&= \dg(U\inv,\id) &&= \dg(U,\SOn)\,. \nonumber
\end{alignat}
Note carefully that, although \eqref{eq:logDistanceToIdFromPSym} for $P\in\PSymn$ immediately follows from Theorem \ref{theorem:mainResult}, it is not trivial to compute the distance $\dg(P,\,\id)$ directly: while the curve given by $\exp(t\,\log P)$ for $t\in[0,1]$ is in fact a geodesic \cite{hackl2003dissipation} connecting $\id$ to $P$ with squared length $\mu\,\norm{\dev_n\log P}^2 + \frac{\kappa}{2}\,[\tr(\log P)]^2$, it is not obvious whether or not a shorter connecting geodesic might exist. Our result ensures that this is in fact not the case.
\end{remark}
\begin{proof}[Proof of Theorem \ref{theorem:mainResult}]
Let $F\in\GLpn$ and $\Qhat\in\SOn$. Then according to our previous considerations (cf.\ Section \ref{sectionContains:ConsiderationsOnMinimizingGeodesics}) there exists $\xi\in\gln$ with
\begin{equation}
\label{geodesicEndPointQ}
	\exp(\sym\xi - \omegatfrac \skew\xi) \, \exp((1+\omegatfrac)\skew\xi) = F\inv \Qhat
\end{equation}
and
\begin{equation}
\label{geodesicLengthQxi}
	\isonorm{\xi} = \dg(F,\Qhat)\,.
\end{equation}
In order to find a lower estimate on $\isonorm{\xi}$ (and thus on $\dg(F,\Qhat)$), we compute
\begin{alignat*}{2}
	&&\exp(\sym\xi - \omegatfrac \skew\xi) \, \exp((1+\omegatfrac)\skew\xi) &= F\inv \Qhat\\
	&\Longrightarrow\quad &\exp((1+\omegatfrac)\skew\xi)\inv \, \exp(\sym\xi - \omegatfrac \skew\xi)\inv &= \Qhat^TF\\
	&\Longrightarrow\quad &\exp(-\sym\xi + \omegatfrac \skew\xi) &= \exp(\,\underbrace{(1+\omegatfrac)\skew\xi}_{\in\son}\,)\,\Qhat^TF\,.
\end{alignat*}
\begin{figure}[h]
	\centering
	\tikzsetnextfilename{riemannianDistanceToSOn}
	\begin{tikzpicture}
		\input{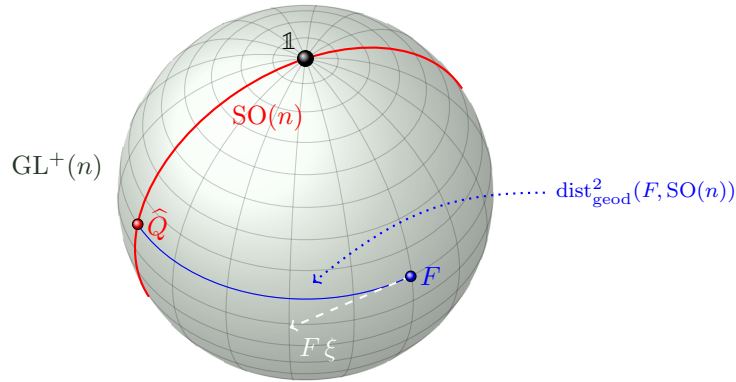}
	\end{tikzpicture}
	\caption{\label{fig:riemannianDistanceToSOn}The geodesic (intrinsic) distance to $\SOn$; neither the element $\Qhat$ of best approximation nor the initial tangent $F\,\xi\in T_F \GLpn$ of the connecting geodesic is known beforehand.}
\end{figure}
Since $\exp(W)\in\SOn$ for all skew symmetric $W\in\son$, we find
\begin{equation}
\label{eq:termIsLogOfQTxiF}
	\exp(\underbrace{-\sym\xi + \omegatfrac \skew\xi}_{\equalscolon Y}) = Q_\xi^TF
\end{equation}
with $Q_\xi = \Qhat\,\exp(-(1+\omegatfrac)\skew\xi\,) \in\SOn$; note that $\sym Y = -\sym\xi$. According to \eqref{eq:termIsLogOfQTxiF}, $Y=-\sym\xi + \omegatfrac \skew\xi$\quad is \enquote{a logarithm}\footnote{Loosely speaking, we use the term \enquote{a logarithm of $A\in\GLpn$} to denote any (real) solution $X$ of the matrix equation $\exp X = A$.} of $Q_\xi^TF$. The weighted Frobenius norm of the symmetric part of \;$Y=-\sym\xi + \omegatfrac \skew\xi$\quad is therefore bounded below by the infimum of $\isonorm{\sym X}$ over \enquote{all logarithms} $X$ of $Q_\xi^TF$:
\begin{align}
	\isonorm{\sym \xi} %
	&= \isonorm{\sym Y}\nnl
	&\overset{\mathclap{\eqref{eq:termIsLogOfQTxiF}}}{\geq} \hphantom{\inf_{Q\in\SOn}}\inf \{\isonorm{\sym X} \setvert X\in\Rnn\,, \; \exp(X)=Q_\xi^TF\}\nnl
	&\geq \inf_{Q\in\SOn} \inf \{\isonorm{\sym X} \setvert X\in\Rnn\,, \; \exp(X)=Q^TF\}\nnl
	&= \inf_{Q\in\SOn} \isonorm{\sym \Log (Q^TF)}\,.\label{eq:inequalityForSymXi}
\end{align}
We can now apply Corollary \ref{cor:infLogWeighted} to find
\begin{align}
	\dg^2(F,\Qhat) = \isonorm{\xi}^2 &= \mu\,\norm{\dev_n \sym \xi}^2 + \muc\,\norm{\skew \xi}^2 + \frac\kappa2\, [\tr(\sym \xi)]^2\nnl
	&\geq \mu\,\norm{\dev_n \sym \xi}^2 + \frac\kappa2\, [\tr(\sym \xi)]^2\\ %
	&= \isonorm{\sym \xi}^2\nnl
	&\overset{\mathclap{\eqref{eq:inequalityForSymXi}}}{\geq} \inf_{Q\in\SOn} \isonorm{\sym \Log (Q^TF)}^2\nnl
	&\overset{\mathclap{\text{Corollary \ref{cor:infLogWeighted}}}}{=} \qquad\mu\, \isonorm{\log \sqrt{F^TF}}^2 = \mu\,\norm{\dev_n\log U}^2 + \frac{\kappa}{2}\,[\tr(\log U)]^2\nonumber
\end{align}
for $U=\sqrt{F^TF}$. Since this inequality is independent of $\Qhat$ and holds for all $\Qhat\in\SOn$, we obtain the desired lower bound
\[
	\dg^2(F,\SOn) = \inf_{\Qhat\in\SOn} \dg^2(F,\Qhat) \geq \mu\,\norm{\dev_n\log U}^2 + \frac{\kappa}{2}\,[\tr(\log U)]^2
\]
on the geodesic distance of $F$ to $\SOn$. Together with the upper bound
\[
	\dg^2(F,\SOn) \leq \dg^2(F,R) \leq \mu\,\norm{\dev_n\log U}^2 + \frac{\kappa}{2}\,[\tr(\log U)]^2
\]
already established in \eqref{eq:upperBound}, we finally find
\begin{equation}
\label{eq:mainResultProofComputedDistance}
	\dg^2(F,\SOn) = \dg^2(F,R) = \mu\,\norm{\dev_n\log U}^2 + \frac{\kappa}{2}\,[\tr(\log U)]^2\,.
\end{equation}

By equation \eqref{eq:mainResultProofComputedDistance}, apart from computing the geodesic distance of $F$ to $\SOn$, we have shown that the orthogonal polar factor $R=\polar(F)$ is an element of best approximation to $F$ in $\SOn$. However, it is not yet clear whether there exists another element of best approximation, i.e.\ whether there is a $\Qhat\in\SOn$ with $\Qhat\neq R$ and $\dg(F,\Qhat)=\dg(F,R)=\dg(F,\SOn)$.
For this purpose, we need to compare geodesic distances corresponding to different parameters $\mu,\muc,\kappa$. We therefore introduce the following notation: for fixed $\mu,\muc,\kappa>0$, let $\dgparam$ denote the geodesic distance on $\GLpn$ induced by the left-$\GLn$-invariant, right-$\On$-invariant Riemannian metric $g$ (as introduced in \eqref{eq:invariantRiemannianMetricDefinition}) with parameters $\mu,\muc,\kappa$. Furthermore, the length of a curve $\gamma$ with respect to this metric will be denoted by $\lenparam(\gamma)$.

Assume that $\Qhat\in\SOn$ is an element of best approximation to $F$ with respect to $g$ for some fixed parameters $\mu,\muc,\kappa>0$. Then there exists a length minimizing geodesic $\gamma\col[0,1]\to\GLpn$ connecting $\Qhat$ to $F$ of the form
\[
	\gamma(t) = \Qhat \, \exp(t(\sym\xi - \omegatfrac \skew\xi)) \, \exp(t(1+\omegatfrac)\skew\xi)
\]
with $\xi\in\Rnn$, and the length of $\gamma$ is given by
\[
	\lenparam^2(\gamma) = \isonorm{\xi}^2 = \mu\,\norm{\dev_n\sym \xi}^2 + \muc\,\norm{\skew \xi}^2 + \frac\kappa2\, [\tr(\xi)]^2\,.
\]
We first assume that $\skew\xi\neq0$. We choose $\mutilde_c>0$ with $\mutilde_c<\muc$ and find %
\begin{align}
	\dgparamtilde^2(F,\SOn) &= \inf_{Q\in\SOn} \dgparamtilde^2(F,Q) \label{eq:uniquenessStepOne}\\
	&\leq \dgparamtilde^2(F,\Qhat) \;\leq\; \lenparamtilde^2(\gamma)\,, \nonumber
\end{align}
since $\gamma$ is a curve connecting $F$ to $\Qhat\in\SOn$; note that although $\gamma$ is a shortest connecting geodesic with respect to parameters $\mu,\muc,\kappa$ by assumption, it must not necessarily be a length minimizing curve with respect to parameters $\mu,\mutilde_c,\kappa$. Obviously, $\isonormtilde{\xi} < \isonorm{\xi}$ if $\skew\xi\neq0$, and therefore
\begin{equation}
\label{eq:uniquenessStepTwo}
	\lenparamtilde^2(\gamma) = \isonormtilde{\xi}^2 < \isonorm{\xi}^2 = \lenparam^2(\gamma) = \dgparam^2(F,\Qhat)\,.
\end{equation}
By assumption, $\Qhat$ is an element of best approximation to $F$ in $\SOn$ for parameters $\mu,\muc,\kappa$, thus
\begin{align}
	\dgparam^2(F,\Qhat) &= \dgparam^2(F,\SOn) \label{eq:uniquenessStepThree}\\
	&= \mu \, \norm{\dev_n\log U}^2+\frac{\kappa}{2}\,[\tr(\log U)]^2 = \dgparamtilde^2(F,\SOn)\,,\nonumber
\end{align}
where the last equality utilizes the fact that the distance from $F$ to $\SOn$ is independent of the second parameter ($\muc$ or $\mutilde_c$).
Combining \eqref{eq:uniquenessStepOne}, \eqref{eq:uniquenessStepTwo} and \eqref{eq:uniquenessStepThree}, we thereby obtain the contradiction
\[
	\dgparamtilde^2(F,\SOn) \leq \lenparamtilde^2(\gamma) < \dgparam^2(F,\Qhat) = \dgparamtilde^2(F,\SOn)\,,
\]
hence we must have $\skew\xi=0$. But then
\[
	\gamma(1) = \Qhat \, \exp(\sym\xi - \omegatfrac \skew\xi) \, \exp((1+\omegatfrac)\skew\xi) = \Qhat\,\exp(\sym\xi)\,,
\]
and since $\exp(\sym\xi)\in\PSymn$, the uniqueness of the polar decomposition $F=R\,U$ yields $\exp(\sym\xi)=U$ and, finally, $\Qhat=R$.
\end{proof}
The fact that the orthogonal polar factor $R=\polar(F)$ is the unique element of best approximation to $F$ in $\SOn$ with respect to the geodesic distance
corresponds directly to the linear case (cf.\ equality \eqref{eq:skewSymmetricPartAsArgmin} in Section \ref{section:euclideanStrainMeasureInLinearElasticity}), where the skew symmetric part $\skew\grad u$ of the displacement gradient $\grad u$ is the element of best approximation with respect to the Euclidean distance: for $F=\id+\grad u$ we have
\[
	U = \id + \sym\grad u + \mathcal{O}(\norm{\grad u}^2) \qquad\text{and}\qquad R = \id + \skew\grad u + \mathcal{O}(\norm{\grad u}^2)\,,
\]
hence the linear approximation of the orthogonal and the positive definite factor in the polar decomposition is given by $\skew\grad u$ and $\sym\grad u$, respectively. The geometric connection between the geodesic distance on $\GLpn$ and the Euclidean distance on the tangent space $\Rnn=\gl(n)$ at $\id$ is illustrated in Figure \ref{fig:allDistances}.
\begin{figure}[h]
	\centering
	\tikzsetnextfilename{allDistances}
	\begin{tikzpicture}
		\input{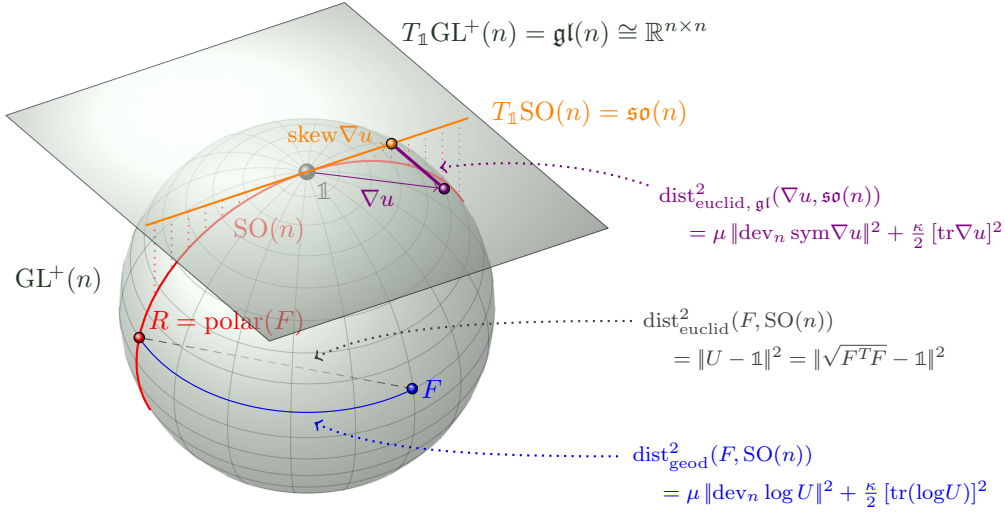}
	\end{tikzpicture}
	\caption{\label{fig:allDistances}The isotropic Hencky energy of $F$ measures the geodesic distance between $F$ and $\SOn$. The linear Euclidean strain measure is obtained as the linearization via the tangent space $\gln$ at $\id$.}
\end{figure}
\begin{remark}
Using a similar proof, exactly the same result can be shown for the geodesic distance $\dgright$ induced by the \emph{right-$\GLn$-invariant, left-$\On$-invariant} Riemannian metric \cite{Vandereycken2010}
\[
	g^{\mathrm{right}}_A(X,Y) = \isoprod{XA\inv,YA\inv}
\]
on $\GLn$:
\[
	\dgright^2(F,\SOn) = \dg^2(F,\SOn) = \mu\,\norm{\dev_n\log U}^2 + \frac{\kappa}{2}\,[\tr(\log U)]^2\,.
\]
The right-$\GLn$-invariant Riemannian metric can be motivated in a way similar to the left-$\GLn$-invariant case: it corresponds to the requirement that the distance between two deformations $F_1$ and $F_2$ should not depend on the initial shape of $\Omega$, i.e.\ should not be changed if $\Omega$ is homogeneously deformed beforehand (cf.\ Figure \ref{fig:invarianceExplanationRightInvariance}). A similar independence from prior deformations (and so-called \enquote{pre-stresses}), called \enquote{elastic determinacy} by L.\ Prandtl \cite{prandtl1924}, was postulated by H.\ Hencky in the deduction of his elasticity model; cf.\ \cite[p.~618]{hencky1929super}, \cite[p.~19]{henckyTranslation} and Section \ref{section:mechanicalMotivations}.
\end{remark}
\begin{figure}
	\centering
	\tikzsetnextfilename{invarianceExplanationRightInvariance}
	\begin{tikzpicture}
		\input{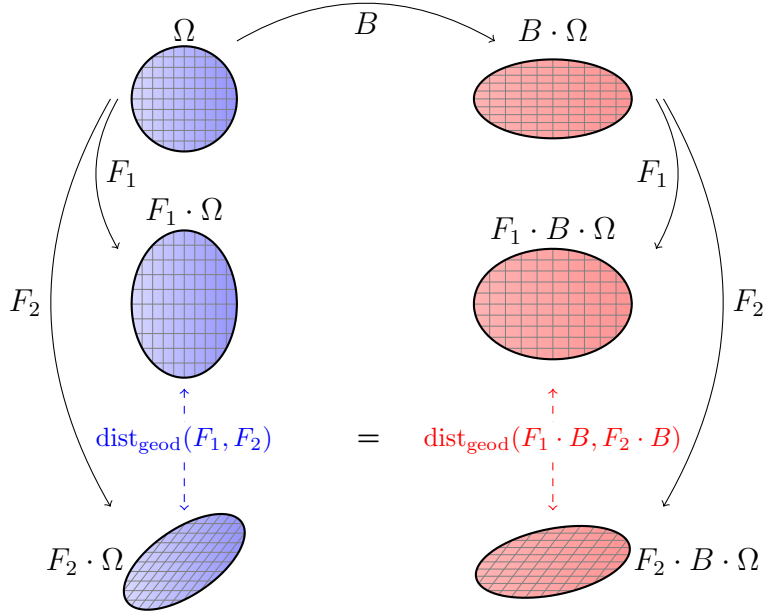}
	\end{tikzpicture}
	\caption{\label{fig:invarianceExplanationRightInvariance}The right-$\GLn$-invariance of a distance measure on $\GLn$: the distance between two homogeneous deformations $F_1,F_2$ is not changed by a prior homogeneous deformation $B$, i.e.\ $\dg(F_1,F_2)=\dg(F_1\cdot B, F_2\cdot B)$.}
\end{figure}
According to Theorem \ref{theorem:mainResult}, the squared geodesic distance between $F$ and $\SOn$ with respect to any left-$\GLn$-invariant, right-$\On$-invariant Riemannian metric on $\GLn$ is the \emph{isotropic quadratic Hencky energy}
\[
	\WH(F) = \mu\,\norm{\dev_n\log U}^2 + \frac{\kappa}{2}\,[\tr(\log U)]^2\,,
\]
where the parameters $\mu,\kappa>0$ represent the shear modulus and the bulk modulus, respectively. The Hencky energy function was introduced in 1929 by H.\ Hencky \cite{Hencky1929}, who derived it from geometrical considerations as well: his deduction\footnote{Hencky's approach is often misrepresented as empirically motivated. Truesdell claims that \quoteref{Hencky himself does not give a systematic treatement} in introducing the logarithmic strain tensor \cite[p.~144]{truesdell1952} and attributes the axiomatic approach to Richter \cite{richter1949verzerrung} instead \cite[p.~270]{truesdell60}. Richter's resulting deviatoric strain tensors $\dev_3 \log U$ and $\dev_3 \log V$ are disqualified as \quoteref{complicated algebraic functions} by Truesdell and Toupin \cite[p.~270]{truesdell60}.} was based on a set of axioms including a law of superposition (cf.\ Section \ref{section:mechanicalMotivations}) for the stress response function \cite{henckyTranslation}, an approach previously employed by G.\,F.\ Becker \cite{becker1893,neff2014becker} in 1893 and later followed in a more general context by H.\ Richter \cite{richter1949verzerrung}, cf.\ \cite{richter1949log,richter1948,richter1952elastizitatstheorie}. A different constitutive model for uniaxial deformations based on logarithmic strain had previously been proposed by Imbert \cite{imbert1880} and Hartig \cite{hartig1893}. While Ludwik is often credited with the introduction of the uniaxial logarithmic strain, his ubiquitously cited article \cite{ludwik1909} (which is even referenced by Hencky himself \cite[p.\ 175]{hencky1931}) does not provide a systematic introduction of such a strain measure.\vspace*{.5em}

While the energy function $\WH(F)=\dg^2(F,\SOn)$ already defines a measure of strain as described in Section \ref{sectionContains:energyAsStrainMeasure}, we are also interested in characterizing the two terms $\norm{\dev_n\log U}$ and $\abs{\tr(\log U)}$ as separate \emph{partial strain measures}.
\begin{theorem}[Partial strain measures]
\label{theorem:separateMeasures}
Let
\[
	\isomeas(F)\colonequals\norm{\dev_n\log\sqrt{F^TF}} \quad\text{ and }\quad \volmeas(F)\colonequals\abs{\tr(\log \sqrt{F^TF})}\,.
\]
Then
\begin{align*}
	\isomeas(F) &= \dgiso \left( \frac{F}{\det F^{\afrac1n}},\; \SOn \right)
\intertext{and}
	\volmeas(F) &= \sqrt{n}\cdot \dgvol \left( (\det F)^{\afrac1n}\cdot\id,\; \id \right)\,,
\end{align*}
where the geodesic distances $\dgiso$ and $\dgvol$ on the Lie groups $\SLn=\{A\in\GLn \setvert \det A = 1\}$ and $\R^+\cdot\id$ are induced by the canonical left-invariant metric
\[
	\bar{g}_A(X,Y) = \innerproduct{A\inv X, A\inv Y} = \tr(X^TA^{-T}A\inv Y)\,.
\]
\end{theorem}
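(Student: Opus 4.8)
The plan is to reduce both identities to Theorem~\ref{theorem:mainResult} by exploiting the product structure that the canonical metric gives $\GLpn$, after first disposing of the volumetric term by a direct one-dimensional computation.

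I would start with $\volmeas$, which needs no geodesic theory on $\GLpn$. Every curve in the one-dimensional submanifold $\R^+\cdot\id$ has the form $\gamma(t)=c(t)\cdot\id$ with $c(t)>0$, so $\gamma\inv\dot\gamma=\frac{\dot c}{c}\,\id$ and $\bar{g}_{\gamma}(\dot\gamma,\dot\gamma)=\frac{\dot c^2}{c^2}\iprod{\id,\id}=n\,\frac{\dot c^2}{c^2}$. Hence $\len(\gamma)=\sqrt n\int_0^1\frac{\abs{\dot c}}{c}\,\dt\ge\sqrt n\,\abs{\ln c(1)-\ln c(0)}$, with equality when $c$ is monotone, so that $\dgvol(c_0\,\id,c_1\,\id)=\sqrt n\,\abs{\ln(c_1/c_0)}$. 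Choosing $c_0=1$ and $c_1=(\det F)^{\afrac1n}$ gives $\sqrt n\cdot\dgvol\bigl((\det F)^{\afrac1n}\id,\id\bigr)=\abs{\ln\det F}$. Since $\tr(\log\sqrt{F^TF})=\ln\det\sqrt{F^TF}=\ln\det F$ for $F\in\GLpn$, the right-hand side is exactly $\volmeas(F)$, which settles the second identity.

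For $\isomeas$ I would use that the canonical left-$\GLn$-invariant metric makes $\GLpn$ a \emph{Riemannian product}. The group isomorphism $F\mapsto\bigl(F/(\det F)^{\afrac1n},\,\det F\bigr)$ identifies $\GLpn$ with $\SLn\times\R^+$, and at the identity the canonical inner product splits orthogonally along the Cartan decomposition $\gln=\sln\oplus\R\cdot\id$. Because left translation preserves this splitting and $\R\cdot\id$ is central, the left-invariant metric equals the product of its restrictions to $\SLn$ and to $\R^+\cdot\id$; consequently the geodesic distance obeys the Pythagorean law $\dg^2=\dgiso^2+\dgvol^2$ on the two factors. Under this identification $\SOn$ is carried to $\SOn\times\{\id\}$ (since $\det Q=1$ for $Q\in\SOn$) and $\R^+\cdot\id$ is precisely the second factor. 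Writing $\tilde F=F/(\det F)^{\afrac1n}\in\SLn$ and noting that the volumetric component of every $Q\in\SOn$ is the same, the infimum over $\SOn$ factors through the product:
\[
	\dg^2(F,\SOn)=\dgiso^2(\tilde F,\SOn)+\dgvol^2\bigl((\det F)^{\afrac1n}\id,\id\bigr)\,.
\]
Theorem~\ref{theorem:mainResult} with the canonical parameters $\mu=\muc=1$, $\kappa=\frac2n$ evaluates the left-hand side as $\norm{\dev_n\log U}^2+\frac1n[\tr(\log U)]^2$, while the volumetric term equals $\frac1n[\tr(\log U)]^2$ by the computation above; subtracting yields $\dgiso^2(\tilde F,\SOn)=\norm{\dev_n\log U}^2=\isomeas^2(F)$, as claimed.

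The main obstacle is the justification of the Riemannian-product claim: one must check both that the left-invariant metric on the product group coincides with the product of the two factor metrics (this follows from the orthogonality of $\sln$ and $\R\cdot\id$ together with the fact that left translation respects the decomposition) and that the distance function of a Riemannian product is the $\ell^2$-combination of the factor distances. A self-contained alternative avoids the product lemma entirely: the upper bound $\dgiso(\tilde F,\SOn)\le\norm{\dev_n\log U}$ follows from the explicit curve $\gamma_R(t)=R\exp\bigl((1-t)\,\dev_n\log U\bigr)$, which lies in $\SLn$ because $\dev_n\log U$ is trace-free and has length $\norm{\dev_n\log U}$, while the matching lower bound is obtained by rerunning the proof of Theorem~\ref{theorem:mainResult} with the initial tangent restricted to $\xi\in\sln$ and invoking Proposition~\ref{prop:infLog} in its unweighted Frobenius form. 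The product route is shorter but depends on the splitting lemma, whereas the direct route only reuses tools already developed above.
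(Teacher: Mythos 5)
Your proposal is correct, but it reaches the isochoric identity by a genuinely different route than the paper. The paper's proof is a two-sided estimate: for the lower bounds it observes that any curve in $\SLn$ or $\R^+\cdot\id$ is a curve of the same length in $\GLpn$, so $\dgiso(\Fiso,\SOn)\geq\dgfull(\Fiso,\SOn)$ and $\dgvol(\Fvol,\id)\geq\dgfull(\Fvol,\SOn)$, and then evaluates the right-hand sides by Theorem \ref{theorem:mainResult}; for the upper bounds it exhibits the explicit curves $\gammaiso(t)=R\exp(t\,\dev_n\log U)$ and $\gammavol(t)=e^{\frac tn\tr(\log U)}\id$ and computes their lengths. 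You instead dispose of $\volmeas$ by an elementary one-dimensional computation (correct, and arguably cleaner than the paper's detour through Theorem \ref{theorem:mainResult} for that part) and obtain $\isomeas$ from the Riemannian-product decomposition $\GLpn\cong\SLn\times\R^+$ together with the Pythagorean law $\dgfull^2=\dgiso^2+\dgvol^2$. The splitting lemma you flag is indeed the load-bearing step, but it checks out: writing $A=c\,A_0$ with $A_0\in\SLn$, $c>0$, one has $A\inv\dot A=A_0\inv\dot A_0+\frac{\dot c}{c}\id$ with $A_0\inv\dot A_0\in\sln$, hence $g_A(\dot A,\dot A)=\norm{A_0\inv\dot A_0}^2+n\,\dot c^2/c^2$, which is exactly the product metric; the $\ell^2$-combination of factor distances then follows from Minkowski's integral inequality (lower bound) and constant-speed product curves (upper bound). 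What your route buys is a structural explanation the paper does not give: the additive split of the Hencky energy into $\isomeas^2$ and $\frac1n\volmeas^2$ is literally the Pythagorean theorem in a Riemannian product, and $\SLn$ is seen to be totally geodesic in $\GLpn$ for the canonical metric. What the paper's route buys is economy of means — it needs nothing beyond the trivial curve-restriction inequality and two explicit curves. Your proposed fallback for the lower bound on $\dgiso$ (rerunning the proof of Theorem \ref{theorem:mainResult} with $\xi\in\sln$) is workable but unnecessarily heavy; the paper's restriction argument applied to $\Fiso$, whose determinant is $1$ so that the trace term in \eqref{eq:mainResultEquation} vanishes, does the same job in one line.
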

\begin{remark}
Theorem \ref{theorem:separateMeasures} states that $\isomeas$ and $\volmeas$ appear as natural measures of the \emph{isochoric} and \emph{volumetric} strain, respectively: if $F=\Fiso\, \Fvol$ is decomposed multiplicatively \cite{flory1961thermodynamic} into an isochoric part $\Fiso=(\det F)^{-\afrac1n}\cdot F$ and a volumetric part $\Fvol=(\det F)^{\afrac1n}\cdot\id$, then $\isomeas(F)$ measures the $\SLn$-geodesic distance of $\Fiso$ to $\SOn$, whereas $\frac{1}{\sqrt{n}}\,\volmeas(F)$ gives the geodesic distance of $\Fvol$ to the identity $\id$ in the group $\R^+\cdot\id$ of purely volumetric deformations.
\end{remark}
\begin{proof}
First, observe that the canonical left-invariant metrics on $\SLn$ and $\R^+\cdot\id$ are obtained by choosing $\mu=\muc=1$ and $\kappa=\frac2n$ and restricting the corresponding metric $g$ on $\GLpn$ to the submanifolds $\SLn$, $\R^+\cdot\id$ and their respective tangent spaces. Then for this choice of parameters, every curve in $\SLn$ or $\R^+\cdot\id$ is a curve of equal length in $\GLpn$ with respect to $g$. Since the geodesic distance is defined as the infimal length of connecting curves, this immediately implies
\[
	\dgiso \left( \Fiso,\; \SOn \right) \geq \dgfull \left( \Fiso,\; \SOn \right)
\]
as well as
\[
	\dgvol \left( \Fvol,\; \id \right) \geq \dgfull \left( \Fvol,\; \id \right) \geq \dgfull \left( \Fvol,\; \SOn \right)
\]
for $\Fiso \colonequals (\det F)^{-\afrac1n}\cdot F$ and $\Fvol \colonequals (\det F)^{\afrac1n}\cdot\id$.
We can therefore use Theorem \ref{theorem:mainResult} to obtain the lower bounds\footnote{For some of the rules of computation employed here involving the matrix logarithm, we refer to Lemma \ref{lemma:logRules} in the appendix.}
\begin{align}
	&\hspace{-1.96em}\dgiso^2 \left( \Fiso,\; \SOn \right)\nnl
	&\geq \dgfull^2 \left( \Fiso,\; \SOn \right)\nnl
	&= \norm{\dev_n\log \left(\sqrt{\Fiso^T\Fiso}\right) }^2 + \frac1n\left[\tr\left(\log\sqrt{\Fiso^T\Fiso}\right)\right]^2\nnl
	&= \norm{\log \left(\left(\det\sqrt{\Fiso^T\Fiso}\right)^{-\afrac1n}\sqrt{\Fiso^T\Fiso}\right) }^2 + \frac1n\,\left[\ln\left(\;\smash{\overbrace{\det\sqrt{\Fiso^T\Fiso}}^{=1}} \vphantom{\det\sqrt{\Fiso^T\Fiso}}\;\right)\right]^2\nnl
	&= \norm{\log \left(\sqrt{\Fiso^T\Fiso}\right) }^2 = \norm{\log \left((\det F)^{-\afrac1n}\sqrt{F^TF}\right)}^2 = \isomeas^2(F)
\end{align}
and
\begin{align}
	\dgvol^2 \left( \Fvol,\; \id \right)
	&\geq \dgfull^2 \left( \Fvol,\; \SOn \right)\nnl
	&= \norm{\dev_n\log \left(\sqrt{\Fvol^T\Fvol}\right) }^2 + \frac1n\,[\tr(\log \left(\sqrt{\Fvol^T\Fvol}\right) )]^2\\
	&= \norm{\dev_n\left(\ln((\det F)^{\afrac1n}) \cdot\id\right) }^2 + \frac1n\,[\ln(\det\left((\det F)^{\afrac1n}\cdot\id\right) )]^2\nnl&
	= \frac1n\,[\ln(\det \sqrt{F^TF})]^2 = \frac1n\,[\tr(\log\sqrt{F^TF})]^2 = \frac1n\,\volmeas^2(F)\,.\nonumber
\end{align}
To obtain an upper bound on the geodesic distances, we define the two curves
\begin{alignat*}{2}
	&\gammaiso\col [0,1]\to\SLn\,,\quad &&\gammaiso(t)=R\,\exp(t\,\dev_n\log U)
\intertext{and}
	&\gammavol\col [0,1]\to\R^+\cdot\id\,,\quad &&\gammavol(t)=e^{\frac{t}{n}\tr(\log U)} \cdot \id\,,
\end{alignat*}
where $F=R\,U$ with $R\in\SOn$ and $U\in\PSymn$ is the polar decomposition of $F$. Then $\gammaiso$ connects $(\det F)^{-\afrac1n}\cdot F$ to $\SOn$:
\begin{alignat*}{2}
	\gammaiso(0) &= R \in\SOn\,,\\
	\gammaiso(1) &= R\,\exp(\dev_n\log U) &&= R\,\exp(\log U - \tfrac{\tr(\log U)}{n}\cdot\id) \\
	&&&= R\,\exp(\log U)\,\exp(-\tfrac{\tr(\log U)}{n}\cdot\id)\\
	&&&= R\,U\, \exp(-\tfrac{\ln\det U}{n}\cdot\id)
	= (\det U)^{-\afrac1n}\cdot F = (\det F)^{-\afrac1n}\cdot F\,,
\end{alignat*}
while $\gammavol$ connects $(\det F)^{\afrac1n}\cdot \id$ and $\id$:
\begin{alignat*}{2}
	\gammavol(0) &= \id\,, \quad \gammavol(1) &= e^{\frac{1}{n}\tr(\log U)}\cdot\id = e^{\frac{1}{n}\ln(\det U)}\cdot\id = (\det U)^{\afrac1n}\cdot\id = (\det F)^{\afrac1n}\cdot\id\,.
\end{alignat*}
The lengths of the curves compute to
\begin{align}
	\len(\gammaiso) &= \int_0^1 \norm{\gammaiso(t)\inv\gammaisodot(t)}\,\dt\\
	&= \int_0^1 \norm{(R\,\exp(t\,\dev_n\log U))\inv\,R\,\exp(t\,\dev_n\log U)\,\dev_n\log U}\,\dt\nnl
	&= \int_0^1 \norm{\dev_n\log U}\,\dt = \norm{\dev_n\log \sqrt{F^TF}} = \isomeas(F)\nonumber
\intertext{as well as}
	\len(\gammavol) &= \int_0^1 \norm{\gammavol(t)\inv\gammavoldot(t)}\,\dt\\
	&= \int_0^1 \norm{(e^{\frac{t}{n}\tr(\log U)} \cdot \id)\inv \cdot \tfrac{\tr(\log U)}{n}\cdot e^{\frac{t}{n}\tr(\log U)} \cdot \id}\,\dt\nnl
	&= \int_0^1 \norm{\tfrac{\tr(\log U)}{n} \cdot \id}\,\dt = \frac{\abs{\tr(\log U)}}{n} \cdot \norm{\id} = \frac{1}{\sqrt{n}}\;\abs{\tr(\log \sqrt{F^TF})} = \frac{1}{\sqrt{n}}\;\volmeas(F)\,,\nonumber
\end{align}
showing that
\begin{align*}
	\dgiso^2 \left( (\det F)^{-\afrac1n}\cdot F,\; \SOn \right) &\leq \len^2(\gammaiso) = \isomeas^2(F)
\intertext{and}
	\dgvol^2 \left( (\det F)^{\afrac1n}\cdot\id,\; \id \right) &\leq \len^2(\gammavol) = \frac1n\cdot\volmeas^2(F)\,,
\end{align*}
which completes the proof.
\end{proof}
\begin{remark}
In addition to the isochoric (distortional) part $\Fiso=(\det F)^{-\afrac1n}\cdot F$ and the volumetric part $\Fvol=(\det F)^{\afrac1n}\cdot\id$, we may also consider the \emph{cofactor} $\Cof F = (\det F) \cdot F^{-T}$ of $F\in\GLpn$. Theorem \ref{theorem:mainResult} allows us to directly compute (cf.\ Appendix \ref{appendix:cofactorComputation}) the distance
\[
	\dg^2(\Cof F, \SOn) = \mu \, \norm{\dev_n\log U}^2+\frac{\kappa\,(n-1)^2}{2}\,[\tr(\log U)]^2\,.\qedhere
\]
\end{remark}

\section{Alternative motivations for the logarithmic strain}
\label{section:alternativeMotivations}
\subsection[Riemannian geometry applied to $\PSymn$]{\boldmath Riemannian geometry applied to $\PSymn$}
Extensive work on the use of Lie group theory and differential geometry in continuum mechanics has already been done by Roug{\'e}e \cite{rougee1992intrinsic,rougee1991new,rougee1997mecanique,rougee2006intrinsic}, Moakher \cite{moakher2005differential,Moakher2010}, Bhatia \cite{bhatia2006riemannian} and, more recently, by Fiala \cite{fiala2004time,fiala2011geometrical,fiala2014evolution,fiala2015discussion,Fiala2016} (cf.\ \cite{latorre2014interpretation,latorre2015response,Ohara1996,pennec2006,Pennec2009}). They all endowed the convex cone $\PSym(3)$ of positive definite symmetric $(3\times3)$-tensors with the Riemannian metric\footnote{Note the subtle difference with our metric $g_C(X,Y)=\iprod{C\inv X, C\inv Y}$. Pennec \cite[p.\ 368]{Pennec2009} generalizes \eqref{eq:PSymMetricDefinition} by using the weighted inner product $\iprod{X,Y}_\ast=\iprod{X,Y}+\beta\,\tr(X)\,\tr(Y)$ with $\beta>-\frac1n$.}
\begin{equation}
\label{eq:PSymMetricDefinition}
	\gtilde_C(X,Y) = \tr(C\inv X C\inv Y) = \iprod{XC\inv, C\inv Y} = \iprod{C^{-\afrac12} \,X\, C^{-\afrac12}, C^{-\afrac12} \,Y\, C^{-\afrac12}}\,,
\end{equation}
where $C\in\PSym(3)$ and $X,Y\in\Sym(3)=T_C\PSym(3)$. Fiala and Roug{\'e}e deduced a motivation of the logarithmic strain tensor $\log U$ via geodesic curves connecting elements of $\PSymn$. However, their approach differs markedly from our method employed in the previous sections: the manifold $\PSymn$ already corresponds to \emph{metric states} $C=F^TF$, whereas we consider the full set $\GLpn$ of deformation gradients $F$ (cf.\ Appendix \ref{section:tensorsAndTangentSpaces} and Table \ref{table:summary} in Section \ref{section:conclusion}). This restriction can be viewed as the nonlinear analogue of the a priori restriction to $\eps=\sym\grad u$ in the linear case, i.e.\ the nature of the strain measure is not deduced but postulated. Note also that the metric $\gtilde$ %
cannot be obtained by restricting our left-$\GL(3)$-invariant, right-$\OO(3)$-invariant metric $g$ to $\PSym(3)$.\footnote{Since $\PSymn$ is not a Lie group with respect to matrix multiplication, the metric $\gtilde$ itself cannot be left- or right-invariant in any suitable sense.} Furthermore, while Fiala and Roug{\'e}e aim to motivate the Hencky strain tensor $\log U$ directly, our focus lies on the strain measures $\isomeas$, $\volmeas$ and the isotropic Hencky strain energy $\WH$. %

The geodesic curves on $\PSymn$ with respect to $\gtilde$ are of the simple form\footnote{While Moakher gives the parametrization stated here, Roug{\'e}e writes the geodesics in the form $\gamma(t)=\exp(t\cdot\Log(C_2 C_1\inv))\,C_1$ with $C_1,C_2\in\PSymn$, which can also be written as $\gamma(t)= (C_2 C_1\inv)^t\,C_1$; a similar formulation is given by Tarantola \cite[eq.\ (2.78)]{Tarantola06}. For a suitable definition of a matrix logarithm $\Log$ on $\GLpn$, these representations are equivalent to \eqref{eq:PSymGeodesicCurves} with $M=\log(C_2^{-\afrac12} \,C_1\, C_2^{-\afrac12})\in\Symn$.}
\begin{equation}
\label{eq:PSymGeodesicCurves}
	\gamma(t) \;=\; C_1^{\afrac12}\,\exp(t\cdot C_1^{-\afrac12} \,M\, C_1^{-\afrac12})\,C_1^{\afrac12}
\end{equation}
with $C_1\in\PSymn$ and $M\in\Symn=T_{C_1}\!\PSymn$. These geodesics are defined globally, i.e.\ $\PSymn$ is geodesically complete. Furthermore, for given $C_1,C_2\in\PSymn$, there exists a \emph{unique} geodesic curve connecting them; this easily follows from the representation formula \eqref{eq:PSymGeodesicCurves} or from the fact that the curvature of $\PSymn$ with $\gtilde$ is constant and negative \cite{fiala2011geometrical,Jost1998,bhatia2009positive}. Note that this implies that, in contrast to $\GLpn$ with our metric $g$, there are no closed geodesics on $\PSymn$.

An explicit formula for the corresponding geodesic distance was given by Moakher:\footnote{Moakher \cite[eq.\ (2.9)]{moakher2005differential} writes this result as $\norm{\Log (C_2\inv C_1)}=\sqrt{\sum_{i=1}^n \ln^2 \lambda_i}$, where $\lambda_i$ are the eigenvalues of $C_2\inv C_1$. The right hand side of this equation is identical to the result stated in \eqref{eq:moakherGeodesicDistanceOnPsym}. However, since $C_2\inv C_1$ is not necessarily normal, there is in general no logarithm $\Log(C_2\inv C_1)$ whose Frobenius norm satisfies this equality. Note that the eigenvalues of the matrix $C_2\inv C_1$ are real and positive due to its similarity to $C_2^{\afrac12}(C_2\inv C_1)C_2^{-\afrac12}=C_2^{-\afrac12}C_1C_2^{-\afrac12}\in\PSymn$.}
\begin{equation}
\label{eq:moakherGeodesicDistanceOnPsym}
	\dist_{\mathrm{geod,}\PSymn} (C_1,C_2) = \norm{\log(C_2^{-\afrac12} \,C_1\, C_2^{-\afrac12})}\,. %
\end{equation}
In the special case $C_2=\id$, this distance measure is equal to our geodesic distance on $\GLpn$ induced by the canonical inner product: Theorem \ref{theorem:mainResult}, applied with parameters $\mu=\muc=1$ and $\kappa=\frac2n$ to $R=\id$ and $U=C_1$, shows that
\[
	\dist_{\mathrm{geod,}\GLpn}(C_1,\id) = \norm{\log C_1} = \dist_{\mathrm{geod,}\PSymn} (C_1,\id)\,. 
\]
More generally, assume that the two metric states $C_1,C_2\in\PSymn$ commute. Then $C_2\inv C_1\in\PSymn$, and the left-$\GLn$-invariance of the geodesic distance implies
\begin{align}
	\dist_{\mathrm{geod,}\GLpn}(C_1,C_2) &= \dist_{\mathrm{geod,}\GLpn}(C_2\inv C_1,\id) = \norm{\log (C_2\inv C_1)} \nnl
	&= \norm{\log(C_2^{-\afrac12}\,C_2^{-\afrac12} \,C_1)} = \norm{\log(C_2^{-\afrac12} \,C_1\, C_2^{-\afrac12})} \label{eq:psymDistanceEquality}\\
	&= \dist_{\mathrm{geod,}\PSymn} (C_1,C_2)\,.\nonumber
\end{align}
However, since $C_2\inv C_1\notin\PSymn$ in general, this equality does not hold on all of $\PSymn$.

A different approach towards distance functions on the set $\PSymn$ was suggested by Arsigny et al. \cite{arsigny2005fast,arsigny2007geometric,arsigny2009fast} who, motivated by applications of geodesic and logarithmic distances in diffusion tensor imaging, directly define their \emph{Log-Euclidean metric} on $\PSymn$ by
\begin{equation}
	\dist_{\text{\rm{Log-Euclid}}}(C_1,C_2) \colonequals \norm{\log C_1 - \log C_2}\,,
\end{equation}
where $\norm{\,.\,}$ is the Frobenius matrix norm. If $C_1$ and $C_2$ commute, this distance equals the geodesic distance on $\GLpn$ as well:
\begin{align}
	\dist_{\mathrm{geod,}\GLpn}(C_1,C_2) &= \norm{\log (C_2\inv C_1)} \nnl
	&= \norm{\log (C_2\inv) + \log(C_1)}\label{eq:logEuclideanEquality}\\
	&= \norm{\log C_1 - \log C_2} = \dist_{\text{\rm{Log-Euclid}}}(C_1,C_2)\,, \nonumber
\end{align}
where equality in \eqref{eq:logEuclideanEquality} holds due to the fact that $C_1$ and $C_2\inv$ commute. Again, this equality does not hold for arbitrary $C_1$ and $C_2$.

Using a similar Riemannian metric, geodesic distance measures can also be applied to the set of positive definite symmetric fourth-order elasticity tensors, which can be identified with $\PSym(6)$. Norris and Moakher applied such a distance function in order to find an isotropic elasticity tensor $\C\col \Sym(3) \to \Sym(3)$ which best approximates a given anisotropic tensor \cite{moakher2006closest,norris2006isotropic}.

The connection between geodesic distances on the metric states in $\PSymn$ and logarithmic distance measures was also investigated extensively by the late Albert Tarantola \cite{Tarantola06}, a lifelong advocate of logarithmic measures in physics. In his view \cite[4.3.1]{Tarantola06}, \quoteref{\ldots the configuration space is the Lie group $\GLp(3)$, and the only possible measure of strain (as the geodesics of the space) is logarithmic.}

\begin{samepage}
\subsection{Further mechanical motivations for the quadratic isotropic Hencky model based on logarithmic strain tensors}
\label{section:mechanicalMotivations}
\begin{quote}
	\quoteref{At the foundation of all elastic theories lies the definition of strain, and before introducing a new law of elasticity we must explain how finite strain is to be measured.}
\end{quote}
{\raggedleft\scriptsize {Heinrich Hencky: The elastic behavior of vulcanized rubber \cite{hencky1933elastic}.} \qquad\quad\\[2em]}
\end{samepage}
Apart from the geometric considerations laid out in the previous sections, the Hencky strain tensor $E_0 = \log U$ can be characterized via a number of unique properties. 

For example, the Hencky strain is the only strain tensor (for a suitably narrow definition, cf.\ \cite{neff2014becker}) that satisfies the \emph{law of superposition} for coaxial deformations:
\begin{equation}
\label{eq:decompositionOfHenckyStrain}
	E_0(U_1\cdot U_2) = E_0(U_1) + E_0(U_2)
\end{equation}
for all coaxial stretches $U_1$ and $U_2$, i.e.\ $U_1,U_2\in\PSymn$ such that $U_1\cdot U_2 = U_2\cdot U_1$. This characterization was used by Heinrich Hencky \cite{tanner2003heinrich,hencky1923,hencky1931,hencky1933elastic} in his original introduction of the logarithmic strain tensor \cite{Hencky1928,Hencky1929,hencky1929super,henckyTranslation} and, indeed much earlier, by the geologist George Ferdinand Becker \cite{merrill1927biographical}, who postulated a similar law of superposition in order to deduce a logarithmic constitutive law of nonlinear elasticity \cite{becker1893,neff2014becker} (cf.\ Appendix \ref{section:linearRelations}).

In the case $n=1$, this superposition principle simply amounts to the fact that the logarithm function $f=\log$ satisfies Cauchy's \cite{cauchy1821cours} well-known functional equation
\begin{equation}
\label{eq:cauchysFunctionalEquation}
	f(\lambda_1\cdot\lambda_2)=f(\lambda_1)+f(\lambda_2)\,,
\end{equation}
i.e.\ that the logarithm is an isomorphism between the multiplicative group $(\Rp,\cdot)$ and the additive group $(\R,+)$. This means that for a sequence of incremental one-dimensional deformations, the logarithmic strains $\scaleelog^i$ can be added in order to obtain the total logarithmic strain $\scaleelog^{\mathrm{tot}}$ of the composed deformation \cite{fitzgerald1980tensorial}:
\[
	\scaleelog^1 + \scaleelog^2 + \ldots + \scaleelog^n = \log\frac{L_1}{L_0}+\log\frac{L_2}{L_1} + \ldots + \log\frac{L_{n}}{L_{n-1}} = \log\frac{L_n}{L_0}=\scaleelog^{\mathrm{tot}}\,,
\]
where $L_i$ denotes the length of the (one-dimensional) body after the $i$-th elongation. This property uniquely characterizes the logarithmic strain $\scaleelog$ among all differentiable one-dimensional strain mappings $\scalee\col\R^+\to\R$ with $\scalee'(1)=1$. %

Since purely volumetric deformations of the form $\lambda\cdot\id$ with $\lambda>0$ are coaxial to every stretch $U\in\PSymn$, the decomposition property \eqref{eq:decompositionOfHenckyStrain} allows for a simple \emph{additive volumetric-isochoric split} of the Hencky strain tensor \cite{richter1949verzerrung}:
\newcommand{\TEMPisoVolHeight}{\vphantom{\frac{U}{(\det U)^{\afrac1n}}}}
\begin{align*}
	\log U =\, \log \bigg[\underbrace{\frac{U}{(\det U)^{\afrac1n}}}_{\text{isochoric}} \:\cdot\: \underbrace{\TEMPisoVolHeight(\det U)^{\afrac1n} \cdot \id}_{\text{volumetric}}\bigg]
	&=\, \log \bigg[\frac{U}{(\det U)^{\afrac1n}}\bigg] \:+\: \log \Big[(\det U)^{\afrac1n} \cdot \id\Big]\\
	&=\, \underbrace{\TEMPisoVolHeight\dev_n\log U}_{\text{isochoric}} \:+\: \underbrace{\TEMPisoVolHeight\frac1n\tr(\log U)\cdot\id}_{\text{volumetric}}\,.
\end{align*}
In particular, the incompressibility condition $\det F = 1$ can be easily expressed as $\tr(\log U) = 0$ in terms of the logarithmic strain tensor.

\subsubsection{From Truesdell's hypoelasticity to Hencky's hyperelastic model}
\label{section:hypoelasticity}
As indicated in Section \ref{section:actualIntroduction}, the quadratic Hencky energy is also of great importance to the concept of \emph{hypoelasticity} \cite[Chapter IX]{grioli1962equilibrium}. It was 
found that the Truesdell equation\footnote{It is telling to see that equation \eqref{eq:truesdellEquation} had already been proposed by Hencky himself in \cite{hencky1929super} for the \emph{Zaremba-Jaumann stress rate} (cf.\ \eqref{eq:jaumannRate}). Hencky's work, however, contains a typographical error \hbox{\cite[eq.~(10) and eq.~(11e)]{hencky1929super}} changing the order of indices in his equations (cf.\ \cite{bruhns2014history}). The strong point of writing \eqref{eq:truesdellEquation} is that no discussion of any suitable strain tensor is necessary.%
} \cite{truesdell1952,truesdell1955simplest,truesdell1955hypo,freed2014hencky}
\begin{equation}
\label{eq:truesdellEquation}
	\ddtsquare[\tau] = 2\,\mu \,D + \lambda\,\tr(D)\cdot\id\,,\qquad D= \sym(\dot F\, F^{-1})\,,
\end{equation}
with constant Lam\'e coefficients $\mu,\lambda>0$, under the assumption that the stress rate $\ddtsquare$ is \emph{objective}\footnote{%
A rate $\ddtsquare$ is called objective if $\ddtsquare \big[S(QB\Qdot^T)\big] = Q\,(\ddtsquare [S(B)])Q^T$ for all (not necessarily constant) $Q=Q(t)\in\On$, where $S$ is any objective stress tensor, and if $\ddtsquare[S]=0\,\Leftrightarrow\,S=0$, i.e.\ the motion is rigid if and only if $\ddtsquare[S]\equiv0$.}
\textbf{and} \emph{corotational},
is satisfied if and only if $\ddtsquare$ is the so-called logarithmic corotational rate $\ddtlog$ and $\tau = 2\,\mu \,\log V + \lambda\,\tr(\log V)\cdot\id$
\cite{xiao1999existence,xiao2005,norris2008eulerian,reinhardt1995eulerian,reinhardt1996application,xiao1997hypo,xiao2002hencky,xiao2003hencky}, i.e.\ if and only if the hypoelastic model is exactly Hencky's hyperelastic constitutive model. Here, $\tau=\det F \cdot \sigma(V)$ denotes the Kirchhoff stress tensor and $D$ is the unique rate of stretching tensor (i.e.\ the symmetric part of the velocity gradient in the spatial setting). A rate $\ddtsquare$ is called corotational if it is of the special form
\begin{equation}
	\ddtsquare[X] = \dot X - \Omega X + X\Omega \quad\text{ with $\Omega\in\so(3)$}\,,
\end{equation}
which means that the rate is computed with respect to a frame that is rotated.\footnote{Corotational rates are also special cases of Lie derivatives \cite{Hugh1977b,marsden1994foundations}.} This extra rate of rotation is defined only by the underlying spins of the problem.
Upon specialisation, for $\mu=1$, $\lambda=0$ we obtain\footnote{Cf.\ Xiao, Bruhns and Meyers \cite[p.~90]{xiao1997logarithmic}: \quoteref{\ldots the logarithmic strain \quoteesc{does} possess certain intrinsic far-reaching properties \quoteesc{which} establish its favoured position in all possible strain measures}.} \cite[eq.~71]{bruhns2014prandtl}
\[
	\ddtlog[\log V]=D
\]
as the unique solution to \eqref{eq:truesdellEquation} with a corotational rate. Note that this characterization of the spatial logarithmic strain tensor $\log V$ is by no means exceptional. For example, it is well known that \cite[p.~49, Theorem 1.8]{Haupt99} (cf.\ \cite{bruhns2004oldroyd})
\[
	\ddttriangle[A] = \dot{A} + L^TA + AL = D\,,
\]
where $A=\Ehat_{-1}=\frac12(\id-B\inv)$ is the spatial Almansi strain tensor and $\ddttriangle$ is the upper Oldroyd rate (as defined in \eqref{eq:oldroydRate}).

The quadratic Hencky model
\begin{equation}
\label{eq:Hencky-model}
	\tau = 2\,\mu \,\log V + \lambda\,\tr(\log V)\cdot\id  = D_{\log V} \WH(\log V)
\end{equation}
was generalized in Hill's generalized linear elasticity laws\footnote{\emph{Hooke's law} \cite{hooke1931oxford} (cf.\ \cite{moyer1977hooke}) famously states that the \emph{strain} in a deformation depends linearly on the occurring \emph{stress} (\enquote{ut tensio, sic vis}). However, for finite deformations, different constitutive laws of elasticity can be obtained from this assumption, depending on the choice of a stress/strain pair. An idealized version of such a linear relation is given by \eqref{eq:Hencky-model}, i.e.\ by choosing the spatial Hencky strain tensor $\log V$ and the Kirchhoff stress tensor $\tau$. Since, however, Hooke speaks of extension versus force, the correct interpretation of Hooke's law is $\Biot=2\,\mu\,(U-\id)+\lambda\tr(U-\id)\cdot\id$, i.e.\ the case $r=\frac12$ in \eqref{eq:Generalized-Energy}.} \cite[eq.\ (2.69)]{hill1978}
\begin{equation}
\label{eq:Generalized-Energy}
	T_r = 2\,\mu\, E_r + \lambda\,\tr(E_r)\cdot \id
\end{equation}
with work-conjugate pairs $(T_r,E_r)$ based on the Lagrangian strain measures given in \eqref{eq:sethHillFamily}; cf.\ Appendix \ref{section:linearRelations} for examples.
The concept of \emph{work-conjugacy} was introduced by Hill \cite{Hill68} via an invariance requirement; the spatial stress power 
must be equal to its Lagrangian counterpart:
\begin{equation}
	\det F \cdot \iprod{\sigma, D} = \iprod{T_r, \dot{E}_r}\,, \tag{work-conjugacy}
\end{equation}
by means of which a material 
stress tensor is uniquely linked to its (material rate) conjugate strain tensor. Hence it generalizes the virtual work principle 
and is the foundation of derived methods like the finite element method. 

{For the case of isotropic materials, Hill \cite[p.\ 242]{Hill68} (cf.\ \cite{Hoger87}) shows by spectral decomposition techniques that the work-conjugate
stress to $\log U$ is the back-rotated Cauchy stress $\sigma$ multiplied by $\det F$, hence $\iprod{\sigma, D} = \iprod{R^T\,\sigma \,R, \;\ddt \log U}$, 
which is a generalization of Hill's earlier work \cite{Hill68,hill1978}. 
Sansour \cite{Sansour2001} additionally found that the Eshelby-like stress tensor $\Sigma=CS_2$ is equally conjugate to $\log U$; here, $S_2$ denotes the second Piola-Kirchhoff stress tensor. 
For anisotropy, however, the conjugate stress exists but follows a much more complex format than for isotropy \cite{Hoger87}. The logarithm of the left stretch 
$\log V$ in contrast exhibits a work conjugate stress tensor only for isotropic materials, namely the Kirchhoff stress tensor $\tau = \det F \cdot \sigma$ 
\cite{Ogden83,Hoger87}.} 

While hyperelasticity in its potential format avoids rate equations, the use of \emph{stress rates} (i.e.\ stress increments in time) 
may be useful for the description of inelastic material behavior at finite strains. Since the material time derivative of an Eulerian 
stress tensor is not objective, rates for a tensor $X$ were developed, like 
the (objective and corotational) \emph{Zaremba-Jaumann rate}
\begin{equation}
\label{eq:jaumannRate}
	\ddtdot[X] = \dot X - WX + XW\,, \qquad W = \skew L\,, \quad L = \dot F F^{-1}\,,
\end{equation}
or the (objective but not corotational) lower and upper \emph{Oldroyd rates}
\begin{equation}
\label{eq:oldroydRate}
	\ddttriangledown[X] = \dot X + L^T X + XL \qquad\text{and}\qquad \ddttriangle[X] = \dot X - LX - XL^T\,,
\end{equation}
to name but a few (cf.\ \cite[Section 1.7]{Haupt99} and \cite{sansour1993study}). Which one of these or the great number of other objective rates should be used seems to be rather a matter of taste, 
hence of arbitrariness\footnote{Truesdell and Noll \cite[p.~404]{truesdell65} declared that \quoteref{various such 
stress rates have been used in the literature. Despite claims and whole papers to the contrary, any advantage claimed 
for one such rate over another is pure illusion}, and that \quoteref{the properties of a material are independent of the choice of flux \quoteesc{i.e.\ of the chosen rate}, which, like the choice of a \quoteesc{strain tensor}, is absolutely immaterial} \cite[p.~97]{truesdell65}.} or heuristics\footnote{For a shear test in Eulerian elasto-plasticity 
using the Zaremba-Jaumann rate \eqref{eq:jaumannRate}, an unphysical 
artefact of oscillatory shear stress was observed, first in \cite{Lehmann72}. A similar oscillatory behavior was observed for 
hypoelasticity in \cite{Dienes79}.}, but not a matter of theory. 

The concept of \emph{dual variables}\footnote{Hill \cite{hill1978} used the terms \emph{conjugate} and \emph{dual} as synonyms.} as introduced by 
Tsakmakis and Haupt in \cite{TsakmakisHaupt89} into continuum mechanics overcame the arbitrariness of the chosen rate in that it 
uniquely connects a particular (objective) strain rate to a stress tensor and, analogously, a stress rate to a strain tensor. The rational 
rule is that, when stress and strain tensors operate on configurations other than the reference configurations, the physically significant scalar products 
$\iprod{S_2, \dot E_1}$, $\iprod{\dot S_2, E_1}$, $\iprod{S_2, E_1}$ and $\iprod{{\dot S}_2, \dot E_1}$ (with the second Piola-Kirchhoff stress tensor 
$S_2$ and its work-conjugate Green strain tensor $E_1$) must remain invariant, see \cite{TsakmakisHaupt89,Haupt99}.

\subsubsection{Advantageous properties of the quadratic Hencky energy}
For modelling elastic material behavior there is no theoretical reason to prefer one strain tensor over another one, and the same is true for stress tensors. As discussed in Section \ref{section:actualIntroduction}, stress and strain are \emph{immaterial}.\footnote{Cf.\ Truesdell \cite[p.~145]{truesdell1952}: \quoteref{It is important to realize that since each of the several material tensors [\dots] is an isotropic function of any one of the others, an exact description of strain in terms of any one is equivalent to a description in terms of any other} or Antman \cite[p.~423]{antman2005nonlinear}: \quoteref{In place of $C$, any invertible tensor-valued function of $C$ can be used as a measure of strain.} Rivlin \cite{rivlin1950} states that strain need never be defined at all, cf.\ \cite[p.\ 122]{truesdell65}.} Primary experimental data (forces, displacements) in material testing are sufficient to calculate any strain tensor and any stress tensor and to display any combination thereof in stress-strain curves, while only work-conjugate pairs are physically meaningful. %

However, for modelling finite-strain elasticity, the quadratic Hencky model
\begin{align}
	\WH &= \mu\,\norm{\dev_n\log V}^2 + \frac{\kappa}{2}\,[\tr(\log V)]^2 = \mu\,\norm{\dev_n\log U}^2 + \frac{\kappa}{2}\,[\tr(\log U)]^2\,,\nnl
	\tau &= 2\,\mu\,\dev_n\log V + \kappa\,\tr(\log V)\, \id\,, \label{eq:Hencky-model_two}
\end{align}
exhibits a number of unique,
favorable properties, including its functional simplicity and its dependency on only two material parameters $\mu$ and $\kappa$ 
that are determined in the infinitesimal strain regime and remain constant over the entire strain range. In view of the linear dependency 
of stress from logarithmic strain in \eqref{eq:Hencky-model_two}, it is obvious that any nonlinearity in the stress-strain curves can 
only be captured in Hencky's model by virtue of the nonlinearity in the strain tensor itself. There is a surprisingly large 
number of different materials, where Hencky's elasticity relation provides a very good fit to experimental stress-strain data, which 
is true for different length scales and strain regimes. In the following we substantiate this claim with some examples.

\emph{Nonlinear elasticity on macroscopic scales for a variety of materials.} \quad 
Anand \cite{Anand79,Anand86} has shown that the Hencky model is in good agreement with experiments on a wide class of materials,
e.g.\ vulcanized natural rubber, for principal stretches between $0.7$ and $1.3$. More precisely, this refers to the characteristic 
that in tensile deformation the stiffness becomes increasingly smaller compared with the stiffness at zero strain, while for compressive 
deformation the stiffness becomes increasingly larger.       

\emph{Nonlinear elasticity in the very small strain regime.} \quad 
We mention in passing that a qualitatively similar dependency of material stiffness on the sign of the strain has been made much earlier 
in the regime of extremely small strains ($10^{-6}$--$10^{-3}$). In Hartig's law \cite{hartig1893} from 1893 this dependency was expressed 
as $\frac{\mathrm{d}\sigma}{\mathrm{d}\eps}= E^0 +b\,\sigma$, where $E^0$ is the elasticity modulus at zero stress and $b<0$ is a dimensionless constant,\footnote{The negative curvature ($b<0$) was already suggested by Jacob Bernoulli in 1705 \cite{bernoulli1705veritable} (cf.\ \cite[p.\ 276]{benvenuto1991}): \quoteref{Homogeneous fibers of the same length and thickness, but loaded with different weights, neither lengthen nor shorten proportional to these weights; but the lengthening or the shortening caused by the small weight is less than the ratio that the first weight has to the second.}} 
cf.\ the book of Bell \cite{bell1973} and \cite{Man98} in the context of linear elasticity with initial stress.
Hartig also observed that the stress-stretch relation should have negative curvature\footnote{As Bell insists \cite[p.~155]{bell1973}, a purely linear elastic response to finite strain, corresponding to zero curvature of the stress-strain curve at the identity $\id$, is never exhibited by any physical material: \quoteref{The experiments of 280 years have demonstrated amply for every solid substance examined with sufficient care, that the \quoteesc{finite engineering} strain \quoteesc{$U-\id$} resulting from small applied stress is not a linear function thereof.}} in the vicinity of the identity, as shown in Figure \ref{fig:thirdOrderConstants}.

\emph{Crystalline elasticity on the nanoscale.} \quad  
Quite in contrast to the strictly \emph{stress}-based continuum constitutive modelling, atomistic theories are based on a concept 
of interatomic \emph{forces}. These forces are derived from potentials\footnote{For molecular dynamics (MD) simulations, 
a well-established level of sophistication is the modelling by potentials with environmental dependence (pair functionals 
like in the Embedded Atom Method (EAM) account for the energy cost to embed atomic nuclei into the electron gas of variable 
density) and angular dependence (like for Stillinger-Weber or Tersoff functionals).} $\mathcal{V}$ according to the potential relation 
$f_a = -\partial_{x_a}\mathcal{V}$, which endows the model with a variational structure. A further discussion of hybrid, atomistic-continuum coupling can be found in \cite{eidel2009variational}.
Thereby the discreteness of matter at the nanoscale and the nonlocality of atomic interactions 
are inherently captured. Here, atomistic stress is neither a constitutive agency nor does it enter a balance equation. Instead, 
it optionally can be calculated following the \emph{virial stress theorem} \cite[Chapter 8]{tadmor2011modeling} to illustrate the state of the system. 

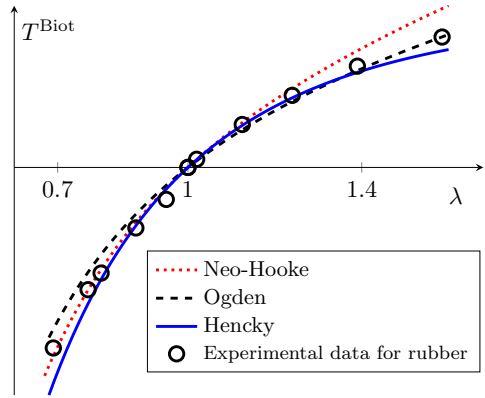
\begin{wrapfigure}{r}{0.5\textwidth}
	\centering
	\tikzsetnextfilename{thirdOrderConstants}
	\begin{tikzpicture}[scale=.91]
		\tikzset{graphStyle/.style={smooth, color=black, very thick, opacity=.5}}
\begin{axis}[axisStyle, xmin=.6, xmax=1.68, xlabel={$\lambda$}, ylabel={$\Biot$}, legend style={legendStyle,xshift=-56mm,yshift=-28mm}, restrict x to domain=.5:2, xtick={0.7,1,1.4}, ytick={-12}]
	\addplot[graphStyle, samples=100, domain=.67:1.6, color=red, opacity=1,dotted] {Neohooke_t_uniaxial(x)};
	\addplot[graphStyle, samples=100, domain=.68:1.6, color=black, dashed, opacity=1] {ogden_t_uniaxial(x)};
	\addplot[graphStyle, samples=100, domain=.68:1.6, color=blue, opacity=1] {Hencky_t_uniaxial(x)};
	\addplot[markStyle, color=black, very thick] table[x=lambda1,y=t1] {\simpleElongationTable};
	\legend{\small Neo-Hooke,\small Ogden,\small Hencky, \footnotesize Experimental data for rubber};
	\addplot[markStyle, color=black, very thick] table[x=lambda3,y=t_compression] {\equibiaxialAndCompressionTable};
\end{axis}
\node at (0,0){};
	\end{tikzpicture}
	\caption{\label{fig:thirdOrderConstants} The Biot stress $\Biot$ corresponding to uniaxial stretches by factor $\lambda$ of incompressible materials fitted to experimental measurements by Jones and Treloar \cite{jones1975}. The curvature in $\lambda=1$ suggests negative third order constants ($b<0$), which has also been postulated by Grioli \cite[eq.\ (32)]{grioli1966thermodynamic}.}
\end{wrapfigure}

With their analyses in \cite{Dluzewski00} and \cite{Dluzewski03}, D{\l}u{\.z}ewski and coworkers aim to link the atomistic world to 
the macroscopic world of continuum mechanics. They search for the \enquote{best} strain measure with a view towards crystalline elasticity 
on the nanoscale. The authors consider the deformation of a crystal structure and compare the atomistic and continuum approaches. 
Atomistic calculations are made using the Stillinger-Weber potential. The stress-strain behaviour of the best-known anisotropic hyperelastic
models are compared with the behaviour of the atomistic one in the uniaxial deformation test. The result is that the anisotropic energy
based on the Hencky strain energy $\frac{1}{2}\,\iprod{\C.\log U,\log U}$, where $\C$ is the anisotropic elasticity tensor from linear elasticity, gives the best fit to atomistic simulations. More in detail, this 
best fit manifests itself in the observation that for considerable compression (up to $\approx 20\%$) the material stiffness is larger 
than the reference stiffness at zero strain, and for considerable tension (up to $\approx 20\%$) it is smaller than the zero-strain 
stiffness, again in good agreement with the atomistic result. This is also corroborated by comparing tabulated experimentally determined 
third order elastic constants\footnote{Third order elastic constants are corrections to the elasticity tensor in order to improve the response curves beyond 
the infinitesimal neighbourhood of the identity. They exist as tabulated values for many materials. Their numerical values depend 
on the choice of strain measure used which needs to be corrected. D{\l}u{\.z}ewski \cite{Dluzewski00} shows that again the Hencky-strain energy
$\frac12\,\iprod{\C.\log U, \log U}$ provides the best overall approximation.} \cite{Dluzewski00}.

Elastic energy potentials based on logarithmic strain have also recently been motivated via molecular dynamics simulations \cite{henann2009fracture} by Henann and Anand \cite{henann2011large}.

\section{Applications and ongoing research}
\label{section:applicationsAndOngoingResearch}
\subsection{The exponentiated Hencky energy}
As indicated in Section \ref{sectionContains:energyAsStrainMeasure} and shown in Sections \ref{section:euclideanStrainMeasureInLinearElasticity} and \ref{section:riemannianStrainMeasureInNonlinearElasticity}, strain measures are closely connected to isotropic energy functions in nonlinear hyperelasticity: similarly to how the linear elastic energy may be obtained as the square of the Euclidean distance of $\grad u$ to $\son$, the nonlinear quadratic Hencky strain energy is the squared Riemannian distance of $\grad\varphi$ to $\SOn$. For the partial strain measures $\isomeas(F)=\norm{\dev_n\log\sqrt{F^TF}}$ and $\volmeas(F)=\abs{\tr(\log \sqrt{F^TF})}$ defined in Theorem \ref{theorem:separateMeasures}, the Hencky strain energy $\WH$ can be expressed as
\begin{equation}
	\WH(F) = \mu\,\isomeas^2(F) + \frac\kappa2\, \volmeas^2(F)\,.
\end{equation}
However, it is not at all obvious why this weighted squared sum should be viewed as the \enquote{canonical} energy associated with the geodesic strain measures:
while it is reasonable to view the elastic energy as a quantity depending on some strain \emph{measure} alone, the specific form of this dependence must not be determined by purely geometric deductions, but must take into account physical constraints as well as empirical observations.\footnote{G.W.~Leibniz, in a letter to Jacob Bernoulli \cite[p.~572]{leibniz1995letter}, stated as early as 1690 that \quoteref{the \quoteesc{constitutive} relation between extension and stretching force should be determined by experiment}, cf.\ \cite[p.~10]{bell1973}.}

For a large number of materials, the Hencky energy does indeed provide a very accurate model up to moderately large elastic deformations \cite{Anand79,Anand86}, i.e.\ up to stretches of about $40\%$, with only two constant material parameters which can be easily determined in the small strain range. %
For very large strains\footnote{The elastic range of numerous materials, including vulcanized rubber or skin and other soft tissues, lies well above stretches of $40\%$.}, however, the subquadratic growth of the Hencky energy in tension is no longer in agreement with empirical measurements.\footnote{While the behaviour of elasticity models for extremely large strains might not seem important due to physical restraints and intermingling plasticity effects outside a narrow range of perfect elasticity, it is nevertheless important to formulate an \emph{idealized} law of elasticity over the whole range of deformations; cf.\ Hencky \cite[p.~215]{Hencky1928} (as translated in \cite[p.2]{henckyTranslation}): \quoteref{It is not important that such an idealized elastic \quoteesc{behaviour} does not actually exist and our ideally elastic material must therefore remain an ideal. Like so many mathematical and geometric concepts, it is a useful ideal, because once its deducible properties are known it can be used as a comparative rule for assessing the actual elastic behaviour of physical bodies.}} In a series of articles \cite{agn_neff2015exponentiatedI, agn_neff2015exponentiatedII, agn_neff2014exponentiatedIII, agn_ghiba2015exponentiated}, Neff et al.\ have therefore introduced the \emph{exponentiated Hencky energy}
\begin{align}
	\WeH(F) \;&=\; \frac{\mu}{k}\,e^{k\,\isomeas^2(F)} + \frac{\kappa}{2\hat{k}}\,e^{\hat{k}\,\volmeas^2(F)} \;=\; \frac{\mu}{k}\,e^{k\,\|\dev_n\log {U}\|^2}+\frac{\kappa}{2\hat{k}}\,e^{\hat{k}\,[\tr(\log U)]^2}
\end{align}
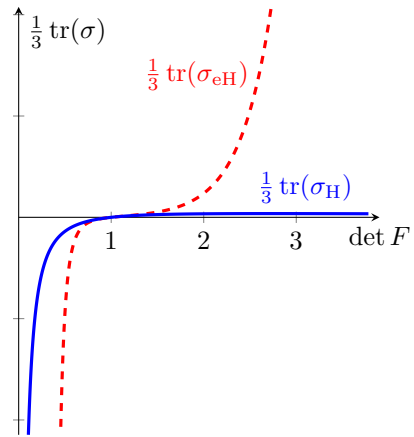
\begin{wrapfigure}{r}{0.49\textwidth}
	\centering
	\tikzsetnextfilename{expHenckyThreeDimensionalEoS}
	\begin{tikzpicture}
		\begin{scope}[scale=1, trim axis left, trim axis right]
	\pgfplotsset{yticklabel style={text width=1em,align=right},xlabel={$\det F$}, ylabel={$\frac13 \tr(\sigma)$}}
	\pgfplotsset{axisStyle/.style={axis x line=middle, axis y line=middle}}
	\tikzset{graphStyle/.style={smooth, color=blue, very thick}}
	
	\def\kappavar{1}
	\def\khatvar{4}

	\begin{scope}
		\begin{axis}[axisStyle, x post scale=.7, xmin=0, xmax=3.9, at={(0,0)}, yticklabels={},scaled y ticks = false,x label style={at={(current axis.right of origin)},anchor=north, below},]
			\addplot[graphStyle, samples=1000, domain=.455:2.73, dashed, color=red] {\kappavar*(ln(x)/x)*exp(\khatvar*ln(x)*ln(x))} node[pos=.84,left]{$\frac13 \tr(\sigma_{\mathrm{eH}})$};
			\addplot[graphStyle, samples=1000, domain=.105:3.78, color=blue] {\kappavar*(ln(x)/x)} node[pos=.9975,above left]{$\frac13 \tr(\sigma_{\mathrm{H}})$};
		\end{axis}
	\end{scope}
\end{scope}
	\end{tikzpicture}
	\caption{The \emph{equation of state} (EOS), i.e.\ the trace of the Cauchy stress corresponding to a purely volumetric deformation (cf.\ \cite{poirier1998logarithmic}), for the quadratic and the exponentiated Hencky model (with parameter $\khat=4$).}
\end{wrapfigure}
with additional dimensionless material parameters $k\geq\frac14$ and $\khat\geq\frac18$, which for all values of $k,\khat$ approximates $\WH$ for deformation gradients $F$ sufficiently close to the identity $\id$, but shows a vastly different behaviour for $\norm{F}\to\infty$, cf.\ Figure \ref{fig:oneDimensionalHenckyAndExpHenckyEnergyPlot}.

\begin{figure} %
	\centering
	\tikzsetnextfilename{oneDimensionalHenckyAndExpHenckyEnergyPlot}
	\begin{tikzpicture}
		\begin{scope}[scale=1, trim axis left, trim axis right]
	\pgfplotsset{yticklabel style={text width=1em,align=right},ytick={-100},xtick={1,5,25},xlabel={$\lambda$}}
	\pgfplotsset{axisStyle/.style={axis x line=middle, axis y line=middle}}
	\tikzset{graphStyle/.style={smooth, color=blue, very thick}}

	\begin{scope}
		\begin{axis}[axisStyle, ymax=18,xmin=0, y post scale=.75, at={(0,0)}]
			\addplot[graphStyle, samples=1000, domain=.021:40] {(ln(x))^2} node[below right, midway] {$\WH(\lambda)=\ln^2(\lambda)$};
			\addplot[graphStyle, samples=1000, domain=.1855:5.4, dashed, color=red] {exp((ln(x))^2)-1}node [right,pos=.84]{$\WeH(\lambda)=e^{\ln^2(\lambda)}$};
		\end{axis}
	\end{scope}

	\begin{scope}
		\begin{axis}[axisStyle, ymax=12,xmin=0, xmax=7, y post scale=.75, at={(7.7cm,0)},every axis x label/.style={
    at={(current axis.right of origin)}, anchor=north}]
			\addplot[graphStyle, samples=1000, domain=.4:7] {2*ln(x)/x} node[above, pos=.77] {$\sigmaH$};
			\addplot[graphStyle, samples=1000, domain=.5:5.4, dashed, color=red] {(2*ln(x)/x)*exp((ln(x))^2)}node [right,pos=.84]{$\sigmaeH$};
		\end{axis}
	\end{scope}

\node at(0,0){};
\end{scope}
	\end{tikzpicture}
	\caption{\label{fig:oneDimensionalHenckyAndExpHenckyEnergyPlot}The one-dimensional Hencky energy $\WH$ compared to the exponentiated Hencky energy $\WeH$ and the corresponding Cauchy stresses $\sigmaH$, $\sigmaeH$ for very large uniaxial stretches $\lambda$. Observe the non-convexity of $\WH$ and the non-invertibility of $\sigma_H$.}
\end{figure}
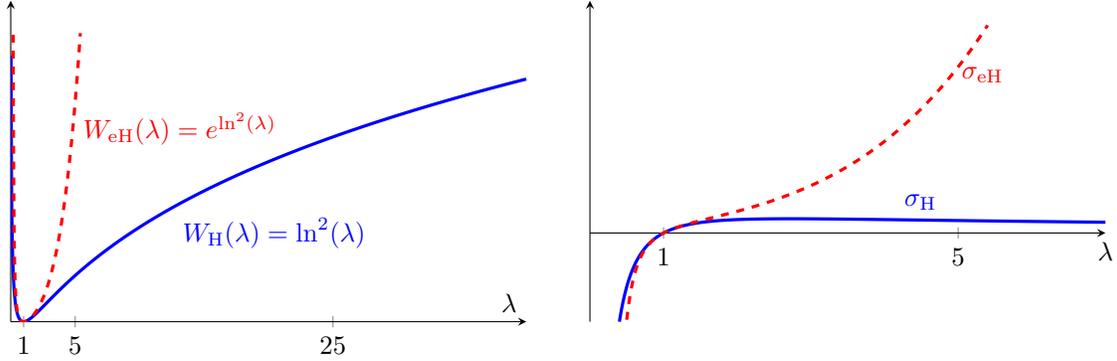

The exponentiated Hencky energy has many advantageous properties over the classical quadratic Hencky energy; for example, $\WeH$ is coercive on all Sobolev spaces $W^{1,p}$ for $1\leq p<\infty$, thus cavitation is excluded \cite{ball1982discontinuous,mueller1995cavitation}. In the planar case $n=2$, $\WeH$ is also polyconvex \cite{agn_neff2015exponentiatedII, agn_ghiba2015exponentiated} and thus Legendre-Hadamard-elliptic \cite{ball1976convexity}, whereas the classical Hencky energy is not even LH-elliptic (rank-one convex) outside a moderately large neighbourhood of $\id$ \cite{Bruhns01,Neff_Diss00} (see also \cite{hutchinson1981}, where the loss of ellipticity for energies of the form $\norm{\dev_3 \log U}^\beta$ with hardening index $0<\beta<1$ are investigated). Therefore, many results guaranteeing the existence of energy-minimizing  deformations for a variety of boundary value problems can be applied directly to $\WeH$ for $n=2$.

Furthermore, $\WeH$ satisfies a number of constitutive inequalities \cite{agn_neff2015exponentiatedI} such as the Baker-Ericksen inequality \cite{marsden1994foundations}, the pressure-compression inequality and the tension-extension inequality as well as Hill's inequality\footnote{Hill's inequality \cite{ogden1974rates} can be stated more generally as $\iprod{\ddtdot[\tau]-m\,[\tau\,D-D\,\tau],\, D} \geq 0$ in the hypoelastic formulation, where $\ddtdot$ is the Zaremba-Jaumann stress rate \eqref{eq:jaumannRate} and $\tau$ is the Kirchhoff stress tensor. For $m=0$, as {\v{S}}ilhav{\'y} explains, \quoteref{Hill's inequalities \quoteesc{\ldots} require the convexity of \quoteesc{the strain energy $W$} in \quoteesc{terms of the strain tensor $\log V$} \ldots This does not seem to contradict any theoretical or experimental evidence} \cite[p.~309]{silhavy1997mechanics}.} \cite{hill1970,ogden1970compressible,ogden1974rates}, which is equivalent to the convexity of the elastic energy with respect to the logarithmic strain tensor \cite{sidoroff1974}.

Moreover, for $\WeH$, the Cauchy-stress-stretch relation $V\mapsto \sigmaeH(V)$ is invertible (a property hitherto unknown for other hyperelastic formulations) and pure Cauchy shear stress corresponds to pure shear strain, as is the case in linear elasticity \cite{agn_neff2015exponentiatedI}. The physical meaning of Poisson's ratio \cite{poisson1829memoire,gercek2007poisson} $\nu=\frac{3\kappa-2\mu}{2(3\kappa+\mu)}$ is also similar to the linear case; for example, $\nu=\frac12$ directly corresponds to incompressibility of the material and $\nu=0$ implies that no lateral extension or contraction occurs in uniaxial tensions tests.

\subsection{Related geodesic distances}
The logarithmic distance measures obtained in Theorems \ref{theorem:mainResult} and \ref{theorem:separateMeasures} show a strong similarity to other geodesic distance measures on Lie groups. For example, consider the special orthogonal group $\SOn$ endowed with the canonical bi-invariant Riemannian metric\footnote{Note that $\muc\cdot\ghat$ is the restriction of our left-$\GLn$-invariant, right-$\On$-invariant metric $g$ (as defined in Section \ref{sectionContains:MetricDefinition}) to $\SOn$.}
\[
	\hat{g}_Q(X,Y) = \innerproduct{Q^T X, Q^T Y} = \innerproduct{X, Y}
\]
for $Q\in\SOn$ and $X,Y\in T_Q\SOn = Q\cdot\son$. Then the geodesic exponential at $\id\in\SOn$ is given by the matrix exponential on the Lie algebra $\son$, i.e.\ all geodesic curves are one-parameter groups of the form
\[
	\gammahat(t) = Q\cdot\exp(t\,A)
\]
with $Q\in\SOn$ and $A\in\son$ (cf.\ \cite{Moakher2002}). It is easy to show that the geodesic distance between $Q,R\in\SOn$ with respect to this metric is given by
\[
	\dgso(Q,R) = \norm{\log (Q^TR)}\,,
\]
where $\norm{\,.\,}$ is the Frobenius matrix norm and $\log\col\SOn\to\son$ denotes the principal matrix logarithm on $\SOn$, which is uniquely defined by the equality $\exp(\log Q)=Q$ and the requirement $\lambda_i(\log Q)\in (-\pi,\pi]$ for all $Q\in\SOn$ and all eigenvalues $\lambda_i(\log Q)$.

This result can be extended to the geodesic distance on the \emph{conformal special orthogonal group} $\CSOn$ consisting of all angle-preserving linear mappings: %
\[
	\CSOn \colonequals \{c\cdot Q \;|\; c>0\,, \; Q\in\SOn\}\,,
\]
where the bi-invariant metric $g_{\CSOn}$ is given by the canonical inner product:
\begin{equation}
	g_A^{\CSOn}(X,Y) = \innerproduct{A\inv X, A\inv Y}\,. \label{eq:CSOmetric}
\end{equation}
Then
\[
	\dgcso^2(c\cdot Q,d\cdot R) = \norm{\log (Q^TR)}^2 + \frac1n\left[\ln\left(\frac{c}{d}\right)\right]^2\,,
\]
where $\log$ again denotes the principal matrix logarithm on $\SOn$. Note that the punctured complex plane $\C\setminus\{0\}$ can be identified with $\CSO(2)$ via the mapping
\[
	z=a+i\, b\quad \mapsto \quad  Z\in \CSO(2) = \left\{ \matr{a&b\\-b&a} \,\bigg|\; a^2+b^2\neq0 \right\}\,.
\]

\subsection{Outlook}
While first applications of the exponentiated Hencky energy, which is based on the partial strain measures $\isomeas,\,\volmeas$ introduced here, show promising results, including an accurate modelling of so-called tire-derived material \cite{agn_montella2015exponentiated}, a more thorough fitting of the new parameter set to experimental data is necessary in order to assess the range of applicability of $\WeH$ towards elastic materials like vulcanized rubber. A different formulation in terms of the partial strain measures $\isomeas$ and $\volmeas$, i.e.\ an energy function of the form
\begin{equation}
\label{eq:measureFormulation}
	W(F) \;=\; \Psi(\isomeas(F), \volmeas(F))
	\;=\; \Psi(\norm{\dev_3\log U},\, \abs{\tr(\log U)})
\end{equation}
with $\Psi\col[0,\infty)^2\to[0,\infty)$, might even prove to be polyconvex in the three-dimensional case. The main open problem of finding a polyconvex (or rank-one convex) isochoric energy function\footnote{Ideally, the function $\widetilde{\Psi}$ should also satisfy additional requirements, such as monotonicity, convexity and exponential growth.} $F\mapsto \widetilde{\Psi}(\norm{\dev_3\log U})$ has also been considered by Sendova and Walton \cite{sendova2005strong}.
Note that while every isotropic elastic energy $W$ can be expressed as $W(F)=h(K_1,K_2,K_3)$ with \emph{Criscione's invariants}\footnote{The invariants $K_1$ and $K_2^2=\tr\big((\dev_3\log U)^2\big)$ as well as $\widetilde{K}_3 = \tr\big((\dev_3\log U)^3\big)$ had already been discussed exhaustively by H.\ Richter in a 1949 ZAMM article \cite[\S4]{richter1949verzerrung}, while $K_1$ and $K_2$ have also been considered by A.I.\ Lurie \cite[p.~189]{lurie2012nonlinear}. Criscione has shown that the invariants given in \eqref{eq:criscione} enjoy a favourable orthogonality condition which is useful when determining material parameters.} \cite{criscione2000invariant,criscione2002direct,diani2005combining,wilber2005baker}
\begin{equation}
	K_1 = \tr(\log U)\,, \qquad K_2 = \norm{\dev_3\log U} \qquad\text{ and }\qquad K_3=\det\left( \frac{\dev_3\log U}{\norm{\dev_3\log U}} \right) \,, \label{eq:criscione} %
\end{equation}
not every elastic energy has a representation of the form \eqref{eq:measureFormulation}; for example, \eqref{eq:measureFormulation} implies the \emph{tension-compression symmetry}\footnote{The tension-compression symmetry is often expressed as $\tau(V\inv)=-\tau(V)$, where $\tau(V)$ is the Kirchhoff stress tensor corresponding to the left Biot stretch $V$. This condition, which is the natural nonlinear counterpart of the equality $\sigma(-\eps)=-\sigma(\eps)$ in linear elasticity, is equivalent to the condition $W(F\inv)=W(F)$ for hyperelastic constitutive models.} $W(F) = W(F\inv)$, which is not necessarily satisfied by energy functions in general.\footnote{Truesdell and Noll \cite[p.~174]{truesdell65} argue that \quoteref{\ldots there is no foundation for the widespread belief that according to the theory of elasticity, pressure and tension have equal but opposite effects}. Examples for isotropic energy functions which do not satisfy this symmetry condition in general but only in the incompressible case can be found in \cite{henann2009large}. For an \emph{idealized} isotropic elastic material, however, the tension-compression symmetry is a \emph{plausible} requirement (with an obvious additive counterpart in linear elasticity), especially for incompressible bodies.} In terms of the \emph{Shield transformation}\footnote{Further properties of the Shield transformation can be found in \cite[p.288]{silhavy1997mechanics}; for example, it preserves the polyconvexity, quasiconvexity and rank-one convexity of the original energy.} \cite{shield1967inverse,carroll2005implications}
\[
	W^*(F) \colonequals \det F \cdot W(F\inv)\,,
\]
the tension-compression symmetry amounts to the requirement $\frac{1}{\det F}\,W^*(F) = W(F)$ or, for incompressible materials, $W^*(F)=W(F)$. Moreover, under the assumption of incompressibility, the symmetry can be immediately extended to arbitrary deformations $\varphi\col\Omega\to\varphi(\Omega)$ and $\varphi\inv\col\varphi(\Omega)\to\Omega$: if $\det\grad\varphi\equiv1$, we can apply the substitution rule to find
\begin{align*}
	\int_{\varphi(\Omega)} W(\grad(\varphi\inv)(x)) \,\dx &= \int_\Omega W(\grad(\varphi\inv)(\varphi(x))) \cdot \abs{\det\grad\varphi(x)}\,\dx\\
	&= \int_\Omega W(\grad\varphi(x)\inv)\,\dx = \int_\Omega W(\grad\varphi(x))\,\dx
\end{align*}
if $W(F\inv)=W(F)$ for all $F\in\SLn$, thus the total energies of the deformations $\varphi,\varphi\inv$ are equal, cf.\ Figure \ref{fig:tensionCompressionSymmetry}.

\begin{figure}
	\centering
	\tikzsetnextfilename{tensionCompressionSymmetry}
	\begin{tikzpicture}
		\input{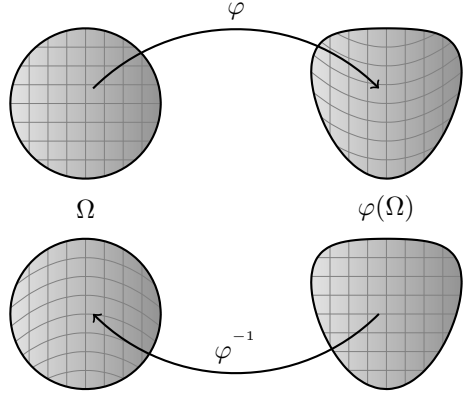}
	\end{tikzpicture}
	\caption{\label{fig:tensionCompressionSymmetry}The tension-compression symmetry for incompressible materials: if $\det \grad\varphi \equiv 1$ and $W(F\inv)=W(F)$ for all $F\in\SLn$, then $\int_\Omega W(\grad\varphi(x)) \,\dx = \int_{\varphi(\Omega)} W(\grad(\varphi\inv)(x)) \,\dx$.}
\end{figure}

Since the function
\[
	F\mapsto e^{\norm{\dev_2\log U}^2} = e^{\dgisotwo^2 \left( \frac{F}{\det F^{\afrac12}},\; \SO(2) \right)}
\]
in planar elasticity is polyconvex \cite{agn_neff2015exponentiatedII, agn_ghiba2015exponentiated}, it stands to reason that a similar formulation in the three-dimensional case might prove to be polyconvex as well. A first step towards finding such an energy is to identify where the function $W$ with
\begin{equation}
	W(F) = e^{\norm{\dev_3\log U}^2} = e^{\dgisothree^2 \left( \frac{F}{\det F^{\afrac13}},\; \SO(3) \right)}\,,
\end{equation}
which is not rank-one convex \cite{agn_neff2015exponentiatedI}, loses its ellipticity properties. For that purpose, it may be useful to consider the \emph{quasiconvex hull} of $W$. There already are a number of promising results for similar energy functions; for example, the quasiconvex hull of the mapping
\[
	F\mapsto \disteuclid^2(F,\SO(2)) = \norm{U-\id}^2
\]
can be explicitly computed \cite{silhavy2001rank,dolzmann2012,dolzmann2013}, and the quasiconvex hull of the similar Saint-Venant-Kirchhoff energy $\WSVK(F) = \frac{\mu}{4}\,\norm{C-\id}^2 + \frac{\lambda}{8}\, [\tr(C-\id)]^2$ has been given by Le Dret and Raoult \cite{ledret1995quasiconvex}. For the mappings
\[
	F\mapsto\disteuclid^2(F,\SO(3)) \quad\text{ or }\quad F\mapsto \dg^2(F,\SO(n))
\]
with $n\geq2$, however, no explicit representation of the quasiconvex hull is yet known, although it has been shown that both expressions are not rank-one convex \cite{bertram2007rank}.

It might also be of interest to calculate the geodesic distance $\dg(A,B)$ for a larger class of matrices $A,B\in\GLpn$:\footnote{An improved understanding of the geometric structure of mechanical problems could, for example, help to develop new discretization methods \cite{sander2015finite,grohs2013optimal}.} although Theorem \ref{theorem:mainResult} allows us to explicitly compute the distance $\dg(\id, P)$ for $P\in\PSymn$ and local results are available for certain special cases \cite{agn_martin2014minimal}, it is an open question whether there is a general formula for the distance $\dgfull(Q,R)$ between arbitrary rotations $R,Q\in\SOn$ for all parameters $\mu,\muc,\kappa>0$. Since restricting our left-$\GLn$-invariant, right-$\On$-invariant metric on $\GLn$ to $\SOn$ yields a multiple of the canonical bi-$\SOn$-invariant metric on $\SOn$, we can compute
\[
	\dgfull^2(Q,R) = \muc\cdot\dgso^2(Q,R) = \muc\,\norm{\log (Q^TR)}^2
\]
if for all $Q,R\in\SOn$ a shortest geodesic in $\GLpn$ connecting $Q$ and $R$ is already contained within $\SOn$, cf.\ Figure \ref{fig:geodesicConvexity}. However, whether this is the case depends on the chosen parameters $\mu,\muc$; a general closed-form solution for $\dgfull$ on $\SOn$ is therefore not yet known \cite{agn_martin2016SO}.

\begin{figure}[h]
	\centering
	\hspace*{-14mm}
	\tikzsetnextfilename{geodesicConvexity}
	\begin{tikzpicture}
		\input{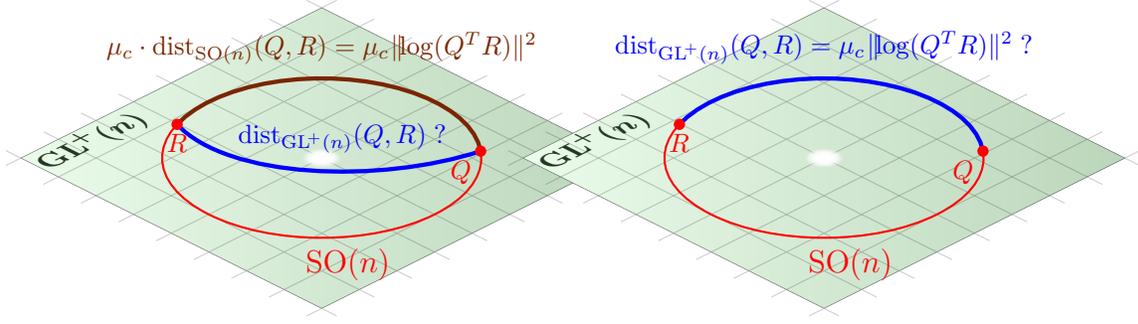}
	\end{tikzpicture}
	\caption{\label{fig:geodesicConvexity}If $\SOn$ contains a length minimizing geodesic connecting $Q,R\in\SOn$ with respect to our left-$\GLn$-invariant, right-$\On$-invariant metric $g$ on $\GLn$, then the $\GLpn$-geodesic distance between $Q$ and $R$ is equal to the well-known $\SOn$-geodesic distance $\muc\,\norm{\log (Q^TR)}^2$.}%
\end{figure}

Moreover, it is not known whether our result can be generalized to \emph{anisotropic} Riemannian metrics, i.e.\ if the geodesic distance to $\SOn$ can be explicitly computed for a larger class of left-$\GLn$-invariant Riemannian metrics which are not necessarily right-$\On$-invariant. A result in this direction would have immediate impact on the modelling of finite strain anisotropic elasticity \cite{agn_balzani2006polyconvex,agn_schroder2005variational,agn_schroder2008anisotropic}. The difficulties with such an extension are twofold: one needs a representation formula for Riemannian metrics which are right-invariant under a given symmetry subgroup of $\On$, as well as an understanding of the corresponding geodesic curves.
\section{Conclusion}
\label{section:conclusion}

We have shown that the squared geodesic distance of the (finite) deformation gradient $F\in\GLpn$ to the special orthogonal group $\SOn$ is the quadratic isotropic Hencky strain energy:
\[
	\dg^2(F,\SOn) = \mu \, \norm{\dev_n \log U}^2+\frac{\kappa}{2}\,[\tr(\log U)]^2\,,
\]
if the general linear group is endowed with the left-$\GLn$-invariant, right-$\On$-invariant Riemannian metric $g_A(X,Y) = \isoprod{A\inv X, A\inv Y}\,$, where
\[
	\isoprod{X,Y} = \mu\,\iprod{\dev_n\sym X, \dev_n\sym Y} + \muc\,\iprod{\skew X, \skew Y} + \tfrac{\kappa}{2}\tr(X) \tr(Y)
\]
with $\iprod{X,Y} = \tr(X^TY)$. Furthermore, the (partial) logarithmic strain measures
\[
	\isomeas = \norm{\dev_n \log U} = \norm{\dev_n \log \sqrt{F^TF}} \quad\text{ and }\quad \volmeas = \abs{\tr(\log U)} = \abs{\tr(\log\sqrt{F^TF})}
\]
have been characterized as the geodesic distance of $F$ to the special orthogonal group $\SOn$ and the identity tensor $\id$, respectively:
\begin{alignat*}{2}
	\isomeas &= \norm{\dev_n \log U}& &= \dgiso \left( \frac{F}{\det F^{\afrac1n}},\; \SOn \right)\,,\\
	\volmeas &= \;\;\abs{\tr(\log U)}& &= \sqrt{n}\cdot \dgvol \left( (\det F)^{\afrac1n}\cdot\id,\; \id \right)\,,
\end{alignat*}
where the geodesic distances on $\SLn$ and $\R^+\cdot\id$ are induced by the canonical left invariant metric $\bar{g}_A(X,Y) = \iprod{A\inv X, A\inv Y}$.

We thereby show that the two quantities $\isomeas=\norm{\dev_n \log U}$ and $\volmeas=\abs{\tr(\log U)}$ are purely geometric properties of the deformation gradient $F$, similar to the invariants $\norm{\dev_n \eps}$ and $\abs{\tr(\eps)}$ of the infinitesimal strain tensor $\eps$ in the linearized setting.

While there have been prior attempts to deductively motivate the use of logarithmic strain in nonlinear elasticity theory, these attempts have usually focussed on
the logarithmic \emph{Hencky strain tensor} $E_0=\log U$ (or $\Ehat_0=\log V$) and its status as the \enquote{natural} material (or spatial) strain tensor in isotropic elasticity.
We discussed, for example, a well-known characterization of $\log V$ in the hypoelastic context: if the strain rate $\ddtsquare$ is objective \emph{as well as corotational}, and if
\[
	\ddtsquare[\Ehat] = D \colonequals \sym(\dot{F}F\inv)
\]
for some strain tensor $\Ehat$, then $\ddtsquare=\ddtlog$ must be the logarithmic rate and $\Ehat=\Ehat_0=\log V$ must be the spatial Hencky strain tensor.

However, as discussed in Section \ref{section:actualIntroduction}, all \emph{strain tensors} are interchangeable:
the choice of a specific strain tensor in which a constitutive law is to be expressed is not a restriction on the available constitutive relations.
Such an approach can therefore not be applied to deduce necessary conditions or a priori properties of constitutive laws.

Our deductive approach, on the other hand, directly motivates the use of the \emph{strain measures} $\isomeas$ and $\volmeas$ from purely differential geometric observations. As we have indicated, the requirement that a constitutive law depends only on $\isomeas$ and $\volmeas$ has direct implications; for example, the tension-compression symmetry $W(F)=W(F\inv)$ is satisfied by every hyperelastic potential $W$ which can be expressed in terms of $\isomeas$ and $\volmeas$ alone.

Moreover, as demonstrated in Section \ref{section:alternativeMotivations}, similar approaches oftentimes \emph{presuppose} the role of the positive definite factor $U=\sqrt{F^TF}$ as the sole measure of the deformation, whereas this independence from the orthogonal polar factor is obtained \emph{deductively} in our approach %
(cf.\ Table \ref{table:summary}). %

\def\arraystretch{2}
\setlength\tabcolsep{3.5pt}
\begin{table}[h]
\centering
{\small
\begin{tabular}{c|c|c}%
& Measure of deformation deduced & Measure of deformation postulated
\\[.75em]
\hline \rule{0pt}{6ex}
linear
& $\begin{aligned} &\fulldisteuclid^2(\grad u, \son)\\&\quad= \mu\,\norm{\dev_n\sym\grad u}^2 + \frac{\kappa}{2}\, [\tr(\sym\grad u)]^2 \end{aligned}$
& $\begin{aligned} &\fulldisteuclidSym^2(\eps,0)\\&\quad= \mu\,\norm{\dev_n\eps}^2 + \frac{\kappa}{2}\, [\tr(\eps)]^2 \end{aligned}$
\\[.75em]
\hline \rule{0pt}{5ex}
$\begin{aligned} &\text{geometrically}\\&\;\;\;\text{nonlinear}\end{aligned}$
& $\begin{aligned} &\disteuclid^2(F, \SOn) = \mu\,\norm{\sqrt{F^TF}-\id}^2 \end{aligned}$
& $\begin{aligned} &\dist_{\mathrm{Euclid},\Symn}^2(U,\id) = \mu\,\norm{U-\id}^2 \end{aligned}$
\\[.75em]
\hline \rule[-5ex]{0pt}{12ex}
$\begin{aligned} &\text{geometrically}\\&\;\;\;\text{nonlinear}\\&\;\,\,\text{(weighted)}\tablefootnote{Observe that $\norm{\dev_n(U-\id)}^2$ does not measure the isochoric (distortional) part $\frac{F}{(\det F)^{\afrac1n}}$ of $F$.}\end{aligned}$
& not well defined
& $\begin{aligned} &\fulldisteuclidSym^2(U,\id)\\&\quad=  \mu\,\norm{\dev_n(U-\id)}^2 + \frac{\kappa}{2}\,[\tr(U-\id)]^2 \end{aligned}$
\\[.75em]
\hline \rule[-5ex]{0pt}{12ex}
geodesic
& $\begin{aligned} &\fulldistgeod^2(F, \SOn)\\&\quad= \mu\,\norm{\dev_n\log(\sqrt{F^TF})}^2 + \frac{\kappa}{2}\,[\tr(\log \sqrt{F^TF})]^2 \end{aligned}$
& $\begin{aligned} &\fulldistgeodPsym^2(U, \id)\\&\quad= \mu\,\norm{\dev_n\log U}^2 + \frac{\kappa}{2}\,[\tr(\log U)]^2 \end{aligned}$
\\[.75em]
\hline \rule{0pt}{9ex}
log-Euclidean
& not well defined
& $\begin{aligned} &\dist_{\text{\rm{Log-Euclid}},\mu,\kappa}^2(U, \id)\\&\quad= \fulldisteuclidSym^2(\log U,0)\\&\quad= \mu\,\norm{\dev_n \log U}^2 + \frac{\kappa}{2}\,[\tr(\log U)]^2 \end{aligned}$
\\[.75em]
\end{tabular}
}
\caption{\label{table:summary}Different approaches towards the motivation of different strain tensors and strain measures.}
\end{table}

Note also that the specific distance measure $\dg$ on $\GLpn$ used here is not chosen arbitrarily: the requirements of left-$\GLn$-invariance and right-$\On$-invariance, which have been motivated by mechanical considerations, uniquely determine $g$ up to the three parameters $\mu,\muc,\kappa>0$. This uniqueness property further emphasizes the generality of our results, which yet again strongly suggest that Hencky's constitutive law should be considered the idealized nonlinear model of elasticity for very small strains outside the infinitesimal range.

\section*{Acknowledgements}
The second author acknowledges support by the Deutsche Forschungsgemeinschaft (DFG) through a Heisenberg fellowship under grant EI 453/2-1.

We are grateful to Prof.\ Alexander Mielke (Weierstra\ss-Institut, Berlin) for pertinent discussions on geodesics in $\GL(n)$; the first parametrization of geodesic curves on $\SLn$ known to us is due to him \cite{Mielke2002}. We also thank Prof. Robert Bryant (Duke University) for his helpful remarks regarding geodesics on Lie groups and invariances of inner products on $\gln$, as well as a number of friends who helped us with the draft.

We also thank Dr.~Andreas Fischle (Technische Universität Dresden) who, during long discussions on continuum mechanics and differential geometry, inspired many of the ideas laid out in this paper.

The first author had the great honour of presenting the main ideas of this paper to Richard Toupin on the occasion of the Canadian Conference on Nonlinear Solid Mechanics 2013 in the mini-symposium organized by Francesco dell'Isola and David J.\ Steigmann, which was dedicated to Toupin.

\section*{Conflict of Interest}
The authors declare that they have no conflict of interest.

\nocite{agn_neff2016henckylatin}
{\footnotesize
\printbibliography
}

\newpage
\begin{appendix}
\section{Appendix}
\subsection{Notation}
{ %
\vspace*{.75em}
\begin{itemize}[leftmargin=.75em]
\setlength{\itemsep}{.4032em}
	\item $\R$ is the set of \emph{real numbers},
	\item $\R^+=(0,\infty)$ is the set of \emph{positive real numbers},
	\item $\R^n$ is the set of real \emph{column vectors} of length $n$,
	\item $\R^{n\times m}$ is the set of real $n\times m$-\emph{matrices},
	\item $\id$ is the \emph{identity tensor}; %
	\item $X^T$ is the \emph{transpose} of a matrix $X\in\Rnm$,
	\item $\tr(X) = \sum_{i=1}^n X_{i,i}$ is the \emph{trace} of $X\in\Rnn$,
	\item $\Cof X$ is the \emph{cofactor} of $X\in\Rnn$,
	\item $\innerproduct{X,Y}=\tr(X^TY)=\sum_{i,j=1}^n X_{i,j}Y_{i,j}$ is the \emph{canonical inner product} on $\Rnn$,
	\item $\norm{X}=\sqrt{\iprod{X,X}}$ is the \emph{Frobenius matrix norm} on $\Rnn$,
	\item $\sym X = \half(X+X^T)$ is the \emph{symmetric part} of $X\in\Rnn$,
	\item $\skew X = \half(X-X^T)$ is the \emph{skew-symmetric part} of $X\in\Rnn$,
	\item $\dev_n X = X-\frac1n\tr(X)\cdot\id$ is the $n$-dimensional \emph{deviator} of $X\in\Rnn$,
	\item $\isoprod{X,Y}=\mu\,\iprod{\dev_n\sym X, \dev_n\sym Y} + \muc\,\iprod{\skew X, \skew Y} + \tfrac{\kappa}{2}\tr(X) \tr(Y)$ is the \emph{weighted inner product} on $\Rnn$,
	\item $\isonorm{X}=\sqrt{\isoprod{X,X}}$ is the \emph{weighted Frobenius norm} on $\Rnn$,
	\item $\GLn = \{A\in\Rnn \setvert \det A \neq 0\}$ is the \emph{general linear group} of all invertible $A\in\Rnn$,
	\item $\GLpn = \{A\in\Rnn \setvert \det A > 0\}$ is the \emph{identity component} of $\GLn$,
	\item $\SLn = \{A\in\Rnn \setvert \det A = 1\}$ is the \emph{special linear group} of all $A\in\GLn$ with $\det A = 1$,
	\item $\On$ is the \emph{orthogonal group} of all $Q\in\Rnn$ with $Q^TQ=\id$,
	\item $\SOn$ is the \emph{special orthogonal group} of all $Q\in\On$ with $\det Q = 1$,
	\item $\Symn$ is the set of \emph{symmetric}, real $n\times n$-matrices, i.e.\ $S^T=S$ for all $S\in\Symn$,
	\item $\PSymn$ is the set of \emph{positive definite}, symmetric, real $n\times n$-matrices, i.e.\ $x^TPx > 0$ for all $P\in\PSymn,\: 0\neq x \in\R^n$,
	\item $\gln=\Rnn$ is the Lie algebra of all real $n\times n$-matrices,
	\item $\son = \{W\in\Rnn \setvert W^T=-W\}$ is the Lie algebra of \emph{skew symmetric}, real $n\times n$-matrices,
	\item $\sln = \{X\in\Rnn \setvert \tr(X) = 0\}$ is the Lie algebra of \emph{trace free}, real $n\times n$-matrices, i.e.\ $\tr(X)=0$ for all $X\in\sln$,
	\item $\Omega\subset\R^n$ is the \emph{reference configuration} of an elastic body,
	\item $\grad \varphi=D\varphi$ is the \emph{first derivative} of a differentiable function $\varphi\col\Omega\subset\R^n\to\R^n$, often called the \emph{deformation gradient},
	\item $\curl v$ denotes the curl of a vector valued function $v\col\R^3\to\R^3$,
	\item $\Curl p$ denotes the curl of a matrix valued function $p\col\R^3\to\R^{3\times3}$, taken row-wise,
	\item $\varphi\col\Omega\to\R^n$ is a continuously differentiable \emph{deformation} with $\grad\varphi(x)\in\GLpn$ for all $x\in\Omega$,
	\item $F=\grad\varphi\in\GLpn$ is the \emph{deformation gradient},
	\item $U=\sqrt{F^TF}\in\PSymn$ is the \emph{right Biot-stretch tensor},
	\item $V=\sqrt{FF^T}\in\PSymn$ is the \emph{left Biot-stretch tensor},
	\item $B=FF^T=V^2$ is the \emph{Finger tensor},
	\item $C=F^TF=U^2$ is the \emph{right Cauchy-Green deformation tensor},
	\item $F=R\,U=V\,R$ is the \emph{polar decomposition} of $F$ with $R=\polar(F)\in\SOn$,
	\item $E_0=\log U$ is the \emph{material Hencky strain tensor},
	\item $\Ehat_0=\log V$ is the \emph{spatial Hencky strain tensor},
	\item $S_1 = D_F W(F)$ is the \emph{first Piola-Kirchhoff stress} corresponding to an elastic energy $W=W(F)$,
	\item $S_2 = F\inv\,S_1 = 2\,D_C W(C)$ is the \emph{second Piola-Kirchhoff stress} corresponding to an elastic energy $W=W(C)$ (Doyle-Ericksen formula),
	\item $\tau = S_1 \,F^T = D_{\log V} W(\log V)$ \cite[p.~116]{lurie2012nonlinear} is the \emph{Kirchhoff stress tensor},
	\item $\sigma = \frac{1}{\det F}\,\tau$ is the \emph{Cauchy stress tensor},
	\item $\Biot = U \,S_2 = D_U W(U)$ is the \emph{Biot stress tensor} corresponding to an elastic energy $W=W(U)$,
	\item $L=\dot{F} F\inv$ is the \emph{spatial velocity gradient},
	\item $D=\sym L$ is the \emph{rate of stretching} or \emph{spatial strain rate tensor},
	\item $W=\skew L$ is the \emph{spatial continuum spin}.
\end{itemize}
\renewcommand*\labelitemi{\textbullet}
}
\newpage
\subsection{Linear stress-strain relations in nonlinear elasticity}
\label{section:linearRelations}
Many constitutive laws commonly used in applications are expressed in terms of linear relations between certain strains and stresses, including Hill's family of generalized linear elasticity laws (cf.\ Section \ref{section:hypoelasticity}) of the form
\begin{equation}
	T_r = 2\,\mu\, E_r + \lambda\,\tr(E_r)\cdot\id
\end{equation}
with work-conjugate pairs $(T_r,E_r)$ based on the Lagrangian strain measures given in \eqref{eq:sethHillFamily}. A widely known example of such a constitutive law is the hyperelastic \emph{Saint-Venant-Kirchhoff model}
\[
	S_2 = 2\,\mu\, E_1 + \lambda\,\tr(E_1)\, \id = \mu\,(C-\id) + \frac\lambda2\, \tr(C-\id)\cdot\id
\]
for $r=1$ and $T_1=S_2$, where $S_2$ denotes the second Piola-Kirchhoff stress tensor.
Similarly, a number of elasticity laws can be written in the form
\[
	\That_r = 2\,\mu\, \Ehat_r + \lambda\,\tr(\Ehat_r)\cdot\id
\]
with a spatial strain tensor $\Ehat_r$ and a corresponding stress tensor $\That_r$. Examples include the \emph{Neo-Hooke type model}
\[
	\sigma = 2\,\mu\, \Ehat_1 + \lambda\,\tr(\Ehat_1)\, \id = \mu\,(B-\id) + \frac\lambda2\, \tr(B-\id)\cdot\id
\]
for $r=1$, where $T_1=\sigma$ is the Cauchy stress tensor, the \emph{Almansi-Signorini model}
\[
	\sigma = 2\,\mu\, \Ehat_{-1} + \lambda\,\tr(\Ehat_{-1})\, \id = \mu\,(\id-B\inv) + \frac\lambda2\, \tr(\id-B\inv)\cdot\id
\]
for $r=-1$ and $T_{-1}=\sigma$, as well as the hyperelastic Hencky model
\[
	\tau = 2\,\mu\,\log V + \lambda\,\tr(\log V)\cdot \id
\]
for $r=0$ and $\That_0=\tau$. A thorough comparison of these four constitutive laws can be found in \cite{batra2001comparison}.

Another example of a postulated linear stress-strain relation is the model
\[
	\Biot = 2\,\mu\,\log U + \lambda\,\tr(\log U)\cdot \id\,,
\]
where $\Biot$ denotes the \emph{Biot stress tensor}, which measures the \quoteref{stress per unit initial area before deformation} \cite{biot1939}. This constitutive relation was first given in an 1893 article by the geologist G.\,F.\ Becker \cite{becker1893,neff2014becker}, who deduced it from a law of superposition in an approach similar to that of H.\ Hencky. The same constitutive law was considered by Carroll \cite{carroll2009} as an example to emphasize the necessity of a hyperelastic formulation in order to ensure physical plausibility in the description of elastic behaviour. Note that of the constitutive relations listed in this section, only the Hencky model and the Saint-Venant-Kirchhoff model are indeed hyperelastic (cf.\ \cite[Chapter 7.4]{bertram2008elasticity}).
\subsection{Tensors and tangent spaces}
\label{section:tensorsAndTangentSpaces}
In the more general setting of differential geometry, the linear mappings $F,U,C,V,B$ and $R$ as well as various stresses at a single point $x$ in an elastic body $\Omega$ are defined as mappings between different \emph{tangent spaces}: for a point $x\in\Omega$ and a deformation $\varphi$, we must then distinguish between the two tangent spaces $T_x\Omega$ and $T_{\varphi(x)}\varphi(\Omega)$. The domains and codomains of various linear mappings are listed below and indicated in Figure \ref{fig:tensorsAndTangentSpaces}. Note that we do not distinguish between tangent and cotangent vector spaces (cf.\ \cite{Federico2015}).
\begin{alignat*}{2}
	F,R&:\quad& T_{x}\Omega \,&\to\, T_{\varphi(x)}\varphi(\Omega)\,,\\
	U,C&:\quad& T_{x}\Omega \,&\to\, T_{x}\Omega\,,\\
	V,B&:\quad& T_{\varphi(x)}\varphi(\Omega) \,&\to\, T_{\varphi(x)}\varphi(\Omega)\,.
\end{alignat*}
\begin{figure}[h]
	\centering
	\tikzsetnextfilename{tensorsAndTangentSpaces}
	\begin{tikzpicture}[scale=1.2]
		\input{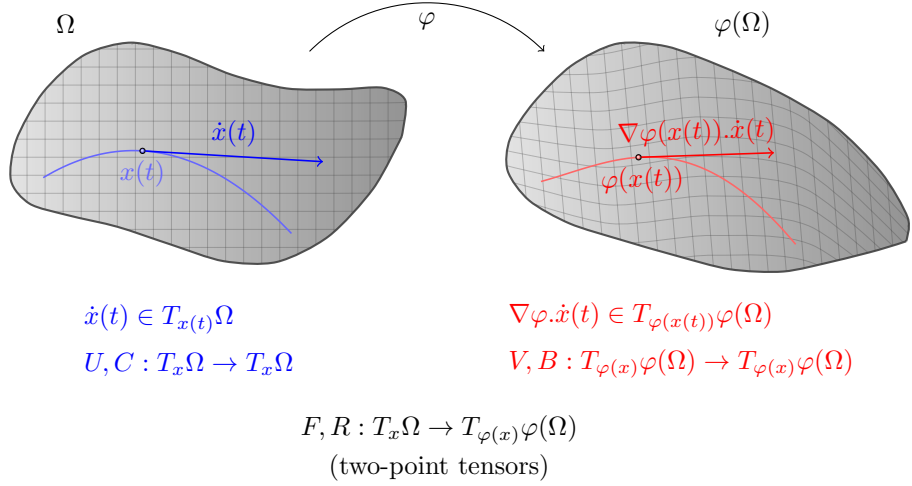}
	\end{tikzpicture}
	\caption{\label{fig:tensorsAndTangentSpaces}Various linear mappings between the tangent spaces $T_{x}\Omega$ and $T_{\varphi(x)}\varphi(\Omega)$.}
\end{figure}

The right Cauchy-Green tensor $C=F^TF$, in particular, is often interpreted as a \emph{Riemannian metric} on $\Omega$; Epstein \cite[p.~113]{epstein2010geometrical} explains that \quoteref{the right Cauchy-Green tensor is precisely the pull-back of the spatial metric to the body manifold}, cf.\ \cite{marsden1994foundations}. If $\Omega$ and $\varphi(\Omega)$ are embedded in the Euclidean space $\R^n$, this connection can immediately be seen: while the length of a curve $x\col[0,1]\to\Omega$ is given by $\int_0^1 \sqrt{\iprod{\xdot,\xdot}}\,\dt$, where $\iprod{\cdot,\cdot}$ is the canonical inner product on $\R^n$, the length of the deformed curve $\varphi\circ x$ is given by (cf.\ Figure \ref{fig:tensorsAndTangentSpaces})
\begin{align*}
	\int_0^1 \sqrt{\iprod{\smallddt(\varphi\circ x),\;\smallddt(\varphi\circ x)}} \,\dt = \int_0^1 \sqrt{\iprod{F(x)\,\xdot,\,F(x)\,\xdot}} \,\dt = \int_0^1 \sqrt{\iprod{C(x)\,\xdot,\,\xdot}} \,\dt\,.
\end{align*}
The quadratic form $g_x(v,v) = \iprod{C(x)\, v,\,v}$ at $x\in\Omega$ therefore measures the length of the deformed line element $Fv$ at $\varphi(x)\in\varphi(\Omega)$. Thus locally,
\[
	\disteuclidPhiOmega(\varphi(x),\varphi(y)) = \dgOmega(x,y)\,,
\]
where $\disteuclidPhiOmega(\varphi(x),\varphi(y)) = \norm{\varphi(x)-\varphi(y)}$ is the Euclidean distance between $\varphi(x),\varphi(y)\in\varphi(\Omega)$ and $\dgOmega(x,y)$ denotes the geodesic distance between $x,y\in\Omega$ with respect to the Riemannian metric $g_x(v,w) = \iprod{C(x)\, v,\,w}$.

Moreover, this interpretation characterizes the Green-Lagrangian strain tensor $E_1 = \frac12(C-\id)$ as a measure of \emph{change in length}: the difference between the squared length of a line element $v\in T_x\Omega$ in the reference configuration and the squared length of the deformed line element $F(x)\,v\in T_{\varphi(x)}\varphi(\Omega)$ is given by
\[
	\norm{F(x)\,v}^2 - \norm{v}^2 = \iprod{C(x)\, v,\,v} - \iprod{v,v} = \iprod{(C(x)-\id)\, v,\,v} = 2\,\iprod{E_1(x)\, v,\,v}\,,
\]
where $\norm{\,.\,}$ denotes the Euclidean norm on $\R^n$.
Note that for $F(x)=\id+\grad u(x)$ with the displacement gradient $\grad u(x)$, the expression $\norm{F(x)\,v}^2$ can be linearized to
\begin{align*}
	\norm{F(x)\,v}^2 &= \norm{(\id+\grad u(x))\,v}^2 = \iprod{(\id+\grad u(x))\,v, \, (\id+\grad u(x))\,v}\\
	&= \iprod{v,v} + 2\,\iprod{\grad u(x)\,v,\, v} + \iprod{\grad u(x)\,v,\, \grad u(x)\,v}\\
	&= \norm{v}^2 + 2\,\iprod{\sym\grad u(x)\,v,\, v} + \norm{\grad u(x)\,v}^2\\
	&= \norm{v}^2 + 2\,\iprod{\sym\grad u(x)\,v,\, v} + \hot\,,
\end{align*}
where $\hot$ denotes higher order terms with respect to $\grad u(x)$. Thus
\[
	\norm{F(x)\,v}^2 - \norm{v}^2 = 2\,\iprod{\eps(x)\,v,\, v} + \hot\,,
\]
where $\eps=\sym\grad u$ is the linear strain tensor.
\subsection{Additional computations}
\label{appendix:cofactorComputation}
Let $\Cof F = (\det F) \cdot F^{-T}$ denote the cofactor of $F\in\GLpn$. Then the geodesic distance of $\Cof F$ to $\SOn$ with respect to the Riemannian metric $g$ introduced in \eqref{eq:invariantRiemannianMetricDefinition} can be computed directly by applying Theorem \ref{theorem:mainResult}:
\begin{align*}
	&\dg^2(\Cof F, \SOn)\\
	&\hspace{1.96em}= \mu\,\norm{\dev_n \log \sqrt{(\Cof F)^T \Cof F}}^2 + \frac\kappa2\,\Big[\tr\Big(\log \sqrt{(\Cof F)^T \Cof F}\Big)\Big]^2\\
	&\hspace{1.96em}= \mu\,\norm{\dev_n \log \sqrt{(\det F)^2\cdot F\inv F^{-T}}}^2 + \frac\kappa2\,\Big[\tr\Big(\log \sqrt{(\det F)^2\cdot F\inv F^{-T}}\Big)\Big]^2\\
	&\hspace{1.96em}= \mu\,\norm{\dev_n \log \sqrt{F\inv F^{-T}}}^2 + \frac\kappa2\,\Big[\tr\Big(\log \big((\det F)\cdot\id\big) + \log \sqrt{F\inv F^{-T}}\Big)\Big]^2\\
	&\hspace{1.96em}= \mu\,\norm{\dev_n \log (U\inv)}^2 + \frac\kappa2\,[\tr\big((\ln\det F)\cdot\id + \log(U\inv)\big)]^2\\
	&\hspace{1.96em}= \mu\,\norm{-\dev_n \log U}^2 + \frac\kappa2\,[n\cdot(\ln\det U) - \tr(\log U)]^2\\
	&\hspace{1.96em}= \mu\,\norm{\dev_n \log U}^2 + \frac{\kappa\,(n-1)^2}{2}\,[\tr(\log U)]^2\,.
\end{align*}
\subsection[The principal matrix logarithm on $\PSymn$ and the matrix exponential]{\boldmath The principal matrix logarithm on $\PSymn$ and the matrix exponential}
The following lemma states some basic computational rules for the matrix exponential $\exp\col\Rnn\to\GLpn$ and the \emph{principal matrix logarithm} $\log\col\PSymn\to\Symn$ involving the trace operator $\tr$ and the deviatoric part $\dev_n X = X - \frac{\tr(X)}{n}\cdot\id$ of a matrix $X\in\Rnn$.
\begin{lemma}
\label{lemma:logRules}
Let $X\in\Rnn$, $P\in\PSymn$ and $c>0$. Then
\begin{alignat*}{2}
	&\text{i)}& \det(\exp(X)) &= e^{\tr(X)}\,,\\
	&\text{ii)}& \exp(\dev_n X) &= e^{-\frac{\tr(X)}{n}}\cdot \exp(X)\,,\\
	&\text{iii)}& \log(c\cdot \id) &= \ln(c)\cdot\log(\id)\,,\\
	&\text{iv)}&\quad \log((\det P)^{-\afrac1n}\cdot P) &= \log P - \tfrac{\ln(\det P)}{n}\cdot\id = \dev_n \log P\,.
\end{alignat*}
\end{lemma}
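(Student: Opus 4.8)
The plan is to establish the four identities largely independently, leaning on two elementary workhorses: the spectral-decomposition definition of the principal logarithm on $\PSymn$ (for $P = \sum_{i=1}^n \lambda_i\, e_i\otimes e_i$ one has $\log P = \sum_{i=1}^n \ln(\lambda_i)\, e_i\otimes e_i$), and the factorization $\exp(A+B) = \exp(A)\exp(B)$, valid whenever $A$ and $B$ commute. I would prove (i) first, since it is both the most substantial identity and the ingredient that later glues (iv) together.

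For (i), I would argue via the eigenvalues: after a Schur (or Jordan) triangularization $X = T\,\Lambda\,T\inv$ with $\Lambda$ upper triangular carrying the (possibly complex) eigenvalues $\lambda_1,\dots,\lambda_n$ of $X$ on its diagonal, the exponential $\exp(X) = T\exp(\Lambda)T\inv$ is again triangular with diagonal entries $e^{\lambda_i}$, so $\det(\exp(X)) = \prod_{i=1}^n e^{\lambda_i} = e^{\sum_i \lambda_i} = e^{\tr(X)}$. (An alternative is the one-parameter argument: $f(t) := \det(\exp(tX))$ satisfies $f'(t) = \tr(X)\,f(t)$ by Jacobi's formula, whence $f(t) = e^{t\,\tr(X)}$ and $t=1$ gives the claim.) For (ii), I would observe that $\dev_n X = X - \tfrac{\tr(X)}{n}\id$ is a difference of \emph{commuting} matrices, since $\id$ commutes with everything; thus
\[
	\exp(\dev_n X) = \exp(X)\,\exp\!\big(-\tfrac{\tr(X)}{n}\id\big) = e^{-\frac{\tr(X)}{n}}\exp(X)\,,
\]
using $\exp(c\,\id) = e^{c}\id$ for the scalar factor.

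For (iii), the spectral definition immediately yields $\log(c\cdot\id) = \ln(c)\cdot\id$, as $c\cdot\id$ has the single eigenvalue $c>0$ of multiplicity $n$. For (iv), I would write the spectral decomposition $P = \sum_i \lambda_i\, e_i\otimes e_i$ and note that $(\det P)^{-\afrac1n} P = \sum_i (\det P)^{-\afrac1n}\lambda_i\, e_i\otimes e_i$ shares the eigenprojections of $P$, so
\[
	\log\!\big((\det P)^{-\afrac1n} P\big) = \sum_{i=1}^n \Big(\ln \lambda_i - \tfrac1n\ln(\det P)\Big)\, e_i\otimes e_i = \log P - \tfrac{\ln(\det P)}{n}\cdot\id\,.
\]
The final equality $\log P - \tfrac{\ln(\det P)}{n}\id = \dev_n\log P$ then follows once $\ln(\det P)$ is identified with $\tr(\log P)$; this is exactly (i) applied to $X = \log P$, since $\det P = \det(\exp(\log P)) = e^{\tr(\log P)}$.

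None of the steps presents a genuine obstacle; the only point demanding care is (i), where the eigenvalue bookkeeping must account for complex eigenvalues and algebraic multiplicities, which triangularization handles cleanly. The mild subtlety is logical ordering: (iv) depends on (i) through the identity $\tr(\log P) = \ln(\det P)$, so (i) must be in place before the deviatoric reformulation in (iv) can be closed.
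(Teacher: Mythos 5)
Your proof is correct. For (ii) it coincides with the paper's argument (commutation with $\id$ plus $\exp(c\,\id)=e^c\id$), and the key identification $\ln(\det P)=\tr(\log P)$ via (i) applied to $X=\log P$ is exactly how the paper closes the second equality in (iv). Two small differences: first, you actually prove (i) (triangularization, or Jacobi's formula), whereas the paper simply cites it as well known -- your version is self-contained at no real cost. Second, for the first equality in (iv) the paper works in the opposite direction: it computes $\exp(\dev_n\log P)=(\det P)^{-\afrac1n}\cdot P$ using (ii) and then invokes the injectivity of $\exp$ on $\Symn$, while you apply the principal logarithm directly to the spectral decomposition of $(\det P)^{-\afrac1n}P$, exploiting that the rescaled matrix shares the eigenprojections of $P$. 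Both routes rest on the same underlying facts; yours avoids the injectivity appeal but instead leans on the spectral definition of $\log$ on $\PSymn$, which you also use for (iii) (where the paper argues via bijectivity of $\exp\col\Symn\to\PSymn$ -- essentially the same thing). One incidental remark: as printed, item (iii) reads $\log(c\cdot\id)=\ln(c)\cdot\log(\id)$, whose right-hand side is zero; the intended identity is $\log(c\cdot\id)=\ln(c)\cdot\id$, which is what both you and the paper's own proof actually establish.
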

\begin{proof}
Equality i) is well known (see e.g.\ \cite{Bernstein2009}). Equality $iii)$ follows directly from the fact that $\exp\col\Symn\to\PSymn$ is bijective and that $\exp(\ln(c)\cdot\id) = e^{\ln(c)}\cdot\id=c\cdot \id$. Since $AB=BA$ implies $\exp(AB)=\exp(A)\exp(B)$, we find
\[
	\exp(\dev_n X) = \exp(X-\frac{\tr(X)}{n}\cdot\id) = \exp(X)\cdot\exp(-\frac{\tr(X)}{n}\cdot\id) = \exp(X) \cdot e^{-\frac{\tr(X)}{n}}\cdot\id\,,
\]
showing $ii)$. For iv), note that
\[
	\tr(\log P) = \ln(\det P) \quad\Longrightarrow\quad \log P - \tfrac{\ln(\det P)}{n}\cdot\id = \dev_n \log P\,,
\]
and
\begin{align*}
	\exp(\dev_n \log P)	&= e^{-\frac{\tr(\log P)}{n}}\cdot \exp(\log P)\\
	&= \left(e^{\ln(\det P)}\right)^{-\afrac1n}\cdot P = (\det P)^{-\afrac1n}\cdot P\,.
\end{align*}
according to ii). Then the injectivity of the matrix exponential on $\Symn$ shows iv).
\end{proof}
\subsection{A short biography of Heinrich Hencky}
\begin{wrapfigure}{r}{0.224\textwidth}
	\vspace{6em}
		\vspace*{-.7em}
		\hspace*{-2.31em}
		\includegraphics[width=3.85cm]{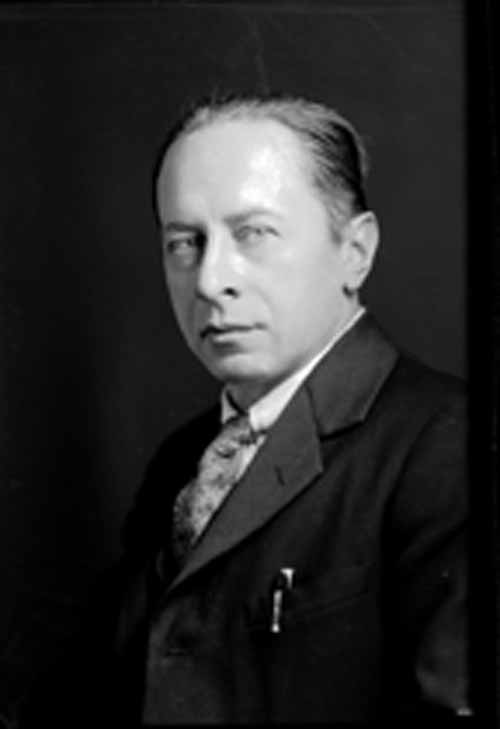}
		\vspace*{-.49em}
		\captionsetup{labelformat=empty,justification=centering}
		\caption{\hspace*{-3em}Hencky at MIT, age 45 \cite{mitchell2009hencky}}
	\vspace{-42em}
\end{wrapfigure}
Biographical information on Heinrich Hencky, as laid out in \cite{tanner2003heinrich,bruhns2014history,henckyObituary}:
\begin{itemize}
	\item November 2, 1885: Hencky is born in Ansbach, Franken, Germany
	\item 1904: Hencky finishes high school in Speyer
	\item 1904--1908: Technische Hochschule M\"unchen
	\item 1909: Military service with the 3rd Pioneer Battalion in M\"unchen
	\item 1912--1913: Ph.D studies at Technische Hochschule Darmstadt
	\item 1910--1912: Work on the Alsatian Railways
	\item 1913--1915: Work for a railway company in Kharkov, Ukraine
	\item 1915--1918: Internment in Kharkov, Ukraine
	\item 1919--1920: Habilitation at Technische Hochschule Darmstadt
	\item 1920--1921: Technische Hochschule Dresden
	\item 1922--1929: Technical University of Delft
	\item 1930--1932: Massachusetts Institute of Technology (MIT)
	\item 1933--1936: Potato farming in New Hampshire %
	\item 1936--1938: Academic work in the Soviet Union, first at Kharkov Chemical Technical Institute, then at the Mechanics Institute of Moscow University
	\item 1938--1950: MAN (Maschinenfabrik Augsburg-N\"urnberg) in Mainz
	\item July 6, 1951: Hencky dies in an avalanche at age 65 during mountain climbing\\
\end{itemize}
Hencky received his diploma in civil engineering from TH M\"unchen in 1908 and his Ph.D from TH Darmstadt in 1913. The title of his thesis was \enquote{\"Uber den Spannungszustand in rechteckigen, ebenen Platten bei gleichm\"a{\ss}ig verteilter und bei konzentrierter Belastung} (\enquote{On the stress state in rectangular flat plates under uniformly distributed and concentrated loading}). In 1915, the main results of his thesis were also published in the Zeitschrift f{\"u}r angewandte Mathematik und Physik \cite{henky1915spannungszustand}.

After working on plasticity theory and small-deformation elasticity, he began his work on finite elastic deformations in 1928. In 1929 he introduced the logarithmic strain $\scaleelog=\log\big(\frac{\text{final length}}{\text{original length}}\big)$ in a tensorial setting \cite{Hencky1928} and applied it to the description of the elastic behavior of vulcanized rubber \cite{hencky1933elastic}.

Today, Hencky is mostly known for  his contributions to plasticity theory: the article \enquote{\"{U}ber einige statisch bestimmte F{\"a}lle des Gleichgewichts in plastischen K\"{o}rpern} \cite{hencky1923gleichgewicht} (\enquote{On statically determined cases of equilibrium in plastic bodies}), published in 1923, is considered his most famous work \cite{tanner2003heinrich}.
\end{appendix}
\end{document}